\newcommand{\m}[1]{{\mbox{\uppercase {\bf {#1}}}}}
\newcommand{\sm}[1]{{\mbox{\scriptsize {\uppercase {\bf {#1}}}}}}
\newcommand{\vr}[1]{{\uppercase {\mathcal{#1}}}}
\newcommand{\clo}[1]{{\rm Clo\:\m #1}}
\newcommand{\clon}[2]{{\rm Clo_{#1}\m #2}}
\newcommand{\pol}[1]{{\rm Pol\:\m #1}}
\newcommand{\poln}[2]{{\rm Pol_{#1}\m #2}}
\newcommand{\con}{{\rm Con\:}}
\newcommand{\cn}[1]{{\con\m {#1}}}
\newcommand{\Cn}[1]{{{\bf\con}\m {#1}}}
\newcommand{\Cg}{{\rm Cg}}
\newcommand{\lb}{\langle}
\newcommand{\rb}{\rangle}
\newcommand{\mt}{\wedge}
\newcommand{\jn}{\vee}
\newcommand{\usub}{\subseteq}
\def\Dj{\mbox{\raise0.3ex\hbox{-}\kern-0.4em D}}
\newtheorem{thm}{Theorem}
\newtheorem{lm}[thm]{Lemma}
\newtheorem{prp}[thm]{Proposition}
\newtheorem{cor}[thm]{Corollary}
\theoremstyle{definition}
\newtheorem{ex}[thm]{Example}
\newtheorem{df}[thm]{Definition}
\begin{document}

\title[SMB algebras I]{SMB Algebras I: On the variety of SMB algebras}
\author[P. \Dj api\'c]{Petar \Dj api\'c}
\address{Department of Mathematics and Informatics\\ 
University of Novi Sad\\ Serbia}
\email{petarn@dmi.uns.ac.rs}
\author[P. Markovi\'c]{Petar Markovi\'c}
\address{Department of Mathematics and Informatics\\ 
University of Novi Sad\\ Serbia}
\email{pera@dmi.uns.ac.rs}
\author[R. McKenzie]{Ralph McKenzie}
\address{Department of Mathematics\\ 
Vanderbilt University\\ Nashville, TN 37240, USA}
\email[Ralph McKenzie]{ralph.n.mckenzie@vanderbilt.edu}
\author[A. Proki\'{c}]{Aleksandar Proki\'{c}}
\address{Faculty of Technical Sciences\\ 
University of Novi Sad\\ Serbia}
\email{aprokic@uns.ac.rs}
\thanks{Petar \Dj api\'{c} and Petar Marković were supported by the Ministry of Education, Science and Technological Development of the Republic of Serbia (Grant No. 451-03-68/2022-14/200125). Petar Markovi\'{c} was also supported by the  Science Fund of the Republic of Serbia grant no. 6062228. Aleksandar Proki\'{c} was supported by the Ministry of Education, Science and Technological Development of the Republic of Serbia (Grant No. 451-03-68/2022/14/200156).}

\begin{abstract}
We begin the investigation of the variety of semilattices of Mal'cev blocks, which we call SMB algebras. 
\end{abstract}

\keywords{finite axiomatizability, variety, tame congruence theory, Mal'cev condition}
\subjclass{08B05, 08A05, 08A45}

\maketitle

\section{Motivation, History and Overview}

This paper, the first in a series of at least two, is a conglomerate of old and new results. The middle two authors have written a note \cite{RP} in 2009 in an effort to identify the ``worst" case of Taylor algebras for the classification of the complexity of the Constraint Satisfaction Problem (CSP). We defined algebras which were almost the same as SMB algebras, though a slightly narrower definition, proved they occur naturally as subalgebras of reducts in Taylor algebras and solved some cases of the CSP. Then M.~Mar\'oti proved in 2010, see \cite{M-note2}, that some more cases of SMB algebras were tractable for CSP. Finally, in 2019, A. Bulatov resolved the CSP for SMB algebras in the general case, and also defined SMB algebras in the most general form in \cite{B1}. Since Bulatov's definition of SMB algebras is also the most natural, we use his definition in this paper (as the generalization doesn't affect the correctness of our old proofs). Bulatov went on to resolve the general CSP Dichotomy using the case of SMB algebras as a template.

In an effort to unify the proofs of the CSP Dichotomy by Zhuk and by Bulatov, Barto et al. investigated minimal Taylor algebras in \cite{DreamTeam} (see also \cite{DreamTeamconf} for the conference version). These minimal Taylor algebras have a binary absorbing subuniverse which behaves somewhat similarly to the least $\sim$-class in SMB algebras, which is the underlying reason why, with a lot of effort, Bulatov's proof of the tractability of the CSP transfers from SMB algebras to general Taylor algebras.

We decided to revisit the SMB algebras, both to finally publish the early results, since it seems that SMB algebras are more significant than we first thought, and to see if there are more results to be proved about them which may be generalized to Taylor algebras. Our effort is divided between (1) trying to unify the two CSP Dichotomy proofs into a simpler argument which can be generalized to more complex topics in model checking and (2) trying to see what other problems we can solve for SMB algebras and then generalize to Taylor algebras. In the course of our research we amassed a lot of results and decided to split them into two papers by topic. This paper contains algebraic results which establish some useful properties of SMB algebras. These properties will be used in the second paper, which is devoted to the complexity of the CSP. The final part of this paper contains some partial results toward proving Park's Conjecture for SMB algebras.

After this, introductory section, we give the necessary mathematical background and notation in Section 2. Section 3 is giving some justification why the SMB algebras we consider are not so unusual. It seems probable that the results of this section can also be obtained using ideas from \cite{KSz} and \cite{KK}, but here we pursue a different approach, using the weak near-unanimity operation, thus making our arguments more elementary and self-contained. The results of Section 3 first appeared in the unpublished manuscript by the middle two authors of this paper, see \cite{RP}.

Section 4 proves that the class of all SMB algebras is a finitely based variety with a Taylor term. Section 5 defines regular SMB algebras and proves that they form a finitely based variety by finding a finite base. Moreover, in Section 5 we prove that in any finitely generated variety $\vr v$ of SMB algebras there exist two terms such that the $\vr v$-algebras with these two terms replacing the fundamental operations form a variety of regular SMB algebras.

Section 6 investigates the principal congruences in SMB algebras and Section 7 proves partial results towards Park's conjecture for SMB algebras.

\section{Preliminaries}

\subsection{Universal Algebra}

We assume that the reader is familiar with the basics of Universal Algebra. The readers who need these facts and definitions are referred to classic textbooks \cite{burris-sank} and \cite{alvin}. Moreover, we need some basic results of Tame Congruence Theory, an advanced theory in Universal Algebra which was developed in \cite{HM}.

Following \cite{alvin}, we use the notation $\clo a$ and $\clon{n}{a}$ for the clone of term operations and the set of $n$-ary term operations of the algebra $\m a$, respectively. Also, $\pol a$ and $\poln{n}{a}$ denote the clone of polynomial operations and the set of $n$-ary polynomial operations of the algebra $\m a$, respectively, while $\Cn a$ denotes the congruence lattice of $\m a$.

An operation $f$ of an algebra $\m a$ is said to be idempotent if the identity $f(x,x,\ldots,x)\approx x$ holds in $\m a$. An algebra is idempotent if all of its fundamental operations (equivalently, term operations) are idempotent. We will assume all algebras are idempotent in this paper. Secondly, unless we explicitly state otherwise, all algebras in this paper will be assumed to be finite by default.

The third restriction on the algebras under consideration is that we are interested in algebras which generate varieties that omit type $\mathbf{1}$ covers (in the language of Tame Congruence Theory), equivalently algebras which have a Taylor term. A Taylor term for $\m a$ is a term $t$ which is idempotent in $\m a$ and satisfies in $\m a$ a finite set $\Sigma$ of identities of the form
\[t(u_1,\dots,u_n)\approx t(v_1,\dots,v_n)\]
such that each $u_i,v_j\in \{x,y\}$ and such that for each $i\leq n$ there exists an identity in $\Sigma$ in which $u_i=x$ and $v_i=y$. According to \cite{MM}, we may assume for any algebra $\m a$ that there exists a particular kind of a Taylor term $w$ such that the identities
$$w(x,x,\ldots,x,y)\approx w(x,x,\ldots,x,y,x)\approx \ldots \approx w(y,x,x,\ldots,x)$$
hold in $\m a$. Those identities, plus idempotence (which we need not assume again) make $w$ a {\em weak near-unanimity} term of $\m a$, or a wnu term for short. We introduce the notation $x\circ_w y$ for the binary term operation $w(x,x,\ldots,x,y)$. A wnu term $w$ of $\m a$ is {\em special} if $\m a$ satisfies the additional identity $x\circ_w(x\circ_w y)\approx x\circ_w y$. We repeat here the following well-known result (probably folklore) since its proof will be useful to us.

\begin{lm}[Lemma 4.7 of \cite{MM}]\label{fulltreeiteration}
Any finite algebra which has a wnu term must also have a special wnu term.
\end{lm}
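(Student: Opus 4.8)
The plan is to take an arbitrary wnu term $w$ of $\m a$, of some arity $k$, and manufacture a special wnu term from it by iterating $w$ along a full $k$-ary tree of bounded depth, the depth being bounded thanks to finiteness of $\m a$. Write $x\circ y$ for $x\circ_w y=w(x,\dots,x,y)$, and for $a\in A$ let $e_a\colon A\to A$ be the unary polynomial $e_a(x)=a\circ x$, with $e_a^{\,d}$ denoting its $d$-fold self-composite. Define the \emph{depth-$d$ full tree iteration} $w^{(d)}$ recursively by $w^{(1)}=w$ and, for $d\ge 1$, letting $w^{(d+1)}$ be the term obtained from $w$ by substituting into each of its $k$ variable slots a separate copy of $w^{(d)}$ on pairwise disjoint tuples of fresh variables. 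Thus $w^{(d)}$ is $k^{d}$-ary and, being a composite of idempotent terms, is itself idempotent.

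The heart of the argument is the assertion that every $w^{(d)}$ is again a wnu term and satisfies $x\circ_{w^{(d)}}y=e_x^{\,d}(y)$, which I would prove by induction on $d$. The base case $d=1$ is just the definition of $\circ_w$. For the inductive step, evaluate $w^{(d+1)}$ with $x$ substituted for every variable except one, which receives $y$. That variable is a leaf of exactly one of the $k$ copies of $w^{(d)}$ occupying the slots of the outer $w$; by the induction hypothesis (applied using that $w^{(d)}$ is already known to be wnu, so the position of the special leaf inside that copy is irrelevant) this copy evaluates to $e_x^{\,d}(y)$, while each of the other $k-1$ copies has all its leaves equal to $x$ and hence evaluates to $x$ by idempotence. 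So the outer $w$ is applied to a $k$-tuple with $e_x^{\,d}(y)$ in one coordinate and $x$ in the others; since $w$ is a wnu term, the value does not depend on which coordinate, and it equals $x\circ e_x^{\,d}(y)=e_x^{\,d+1}(y)$. This shows simultaneously that $w^{(d+1)}$ is wnu and that $x\circ_{w^{(d+1)}}y=e_x^{\,d+1}(y)$, completing the induction.

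To conclude, choose $N\ge 1$ such that $f^{\,N}=f^{\,2N}$ for every unary map $f\colon A\to A$; such an $N$ exists because $A$ is finite (for instance $N=|A|!$ works, since then $f^{\,N}$ is idempotent for every $f\colon A\to A$). In particular $e_a^{\,N}=e_a^{\,2N}$ for every $a\in A$, so, setting $v:=w^{(N)}$, the previous paragraph yields, for all $x,y\in A$,
\[
x\circ_{v}(x\circ_{v}y)=e_x^{\,N}\bigl(e_x^{\,N}(y)\bigr)=e_x^{\,2N}(y)=e_x^{\,N}(y)=x\circ_{v}y,
\]
and therefore $v$ is a special wnu term of $\m a$.

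I do not expect a genuine obstacle here — the argument is largely bookkeeping — but the step that needs care is the inductive claim of the second paragraph, namely checking that the defining identities of a wnu term really do propagate up the tree. They do precisely because idempotence collapses every ``inactive'' subtree to the value $x$, so that at each internal node the term $w$ is fed a tuple with exactly one non-$x$ coordinate, which is exactly the configuration the wnu identities for $w$ govern.
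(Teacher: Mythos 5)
Your proposal is correct and follows essentially the same route as the paper: the paper's term $v=w\lhd w\lhd\dots\lhd w$ (with $|A|!$ copies of $w$) is exactly your full-tree iterate $w^{(|A|!)}$, and both arguments reduce the special identity to the fact that $f^{|A|!}$ is idempotent under composition for every selfmap $f$ of the finite set $A$. The only difference is presentational: you spell out the induction showing that the wnu identities propagate up the tree, which the paper asserts in one line.
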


\begin{proof}
When $f$ is an $n$-ary and $g$ an $m$-ary operation, then $f\lhd g$ stands for the $mn$-ary operation given by 
$$
\begin{gathered}
(f\lhd g)(x_1,\dots,x_{mn})=\\
f(g(x_1,\dots,x_m),g(x_{m+1},\dots,x_{2m}),\dots,g(x_{mn-m+1},\dots,x_{mn})).\end{gathered}
$$
Let $w$ be the wnu term of $\m a$. Consider the term 
\[v=w\lhd w\lhd\dots\lhd w,\]
where the composition $\lhd$ occurs $|A|!-1$ times. Note that $v$ has $k^{|A|!}$ many variables if $w$ was $k$-ary. From idempotence and the fact that $w$ is wnu it follows that $v$ is also wnu and that 
\[x\circ_v y=x\circ_w(x\circ_w(\dots x\circ_w(x\circ_w y)\dots)),\]
where the operation $\circ_w$ occurs $|A|!-1$ times in the above formula. It is a basic fact about selfmaps of a finite set that, for any map $f:A\rightarrow A$, the composition of $|A|!$ copies of $f$ is idempotent for composition, i.e. 
\[f^{|A|!}(f^{|A|!}(y))=f^{|A|!}(y).\]
Applying this to the maps of the form $f(y)=a\circ_w y$ we obtain
\[a\circ_v(a\circ_v y)=f^{2|A|!}(y)=f^{|A|!}(y)=a\circ_v y.\]
Since $a\in A$ can be chosen arbitrarily, the conclusion follows.
\end{proof}

\subsection{Tame Congruence Theory} Tame Congruence Theory analyzes finite algebras according to local behavior of the polynomial operations of the algebra. We assume the reader is familiar with Tame Congruence Theory, as exposed in the book \cite{HM} and refer to this book for further information the reader may require. Polynomial operations are term operations in which some of the variables may have been substituted by fixed constants (elements of the algebra). The tame congruence theory classifies the covers in the congruence lattice of a finite algebra by first finding a minimal image of an idempotent unary polynomial which distinguishes the two congruences (this is the minimal set). The structure of the minimal set together with the polynomial operations of the algebra which are compatible with that minimal set depend only on the congruences which constitute the cover. It may be of five types, unary (type {\bf 1}), affine (type {\bf 2}), Boolean (type {\bf 3}), lattice (type {\bf 4}) and semilattice (type {\bf 5}). Absence of the unary type, not only in the finite algebra $\m a$, but also in any finite algebra in the variety $\m a$ generates, is equivalent to the existence of the wnu term in $\m a$, see \cite{MM}. Thus we may assume no type {\bf 1} occurs.

If $\alpha\prec\beta$ in $\Cn a$, and the type of that cover is {\bf 5}, then in any minimal set $U\in M_{\sm a}(\alpha,\beta)$, one $\beta$-class restricts to $U$ as $B$, which intersects exactly two $\alpha$-classes, while $\alpha$ and $\beta$ restrict the same way to $U\setminus B$. We call $B$ the body of $U$. The polynomials of $\m a$ restricted to $B/(\alpha|_B)$ are those of a two-element semilattice. If there is a body of some $(\alpha,\beta)$-minimal set which intersects two $\alpha$-classes $C_1$ and $C_2$, then any other $(\alpha,\beta)$-minimal set which intersects the classes  $C_1$ and $C_2$ imposes the same semilattice order on the two of them. Hence, we can say that an $\alpha$-class $C_1$ is below another $\alpha$-class $C_2$ (both within the same $\beta$-class) whenever there exists an $(\alpha,\beta)$ minimal set $U$ and its body $B$ so that the in the semilattice order $C_1\cap B$ is below $C_2\cap B$. Moreover, for any $\beta$-class, the transitive closure of the semilattice orders coming from all bodies of minimal sets is a connected partial order $\leq$ of all $\alpha$-classes inside that $\beta$-class, and $\leq$ is compatible with all operations of $\m a/\alpha$ (i.e. $\leq$ is a subuniverse of $(\m a/\alpha)^2$).

\section{Maximal type 5 covers}

This section serves as motivation to show that SMB algebras are both natural and ubiquitous in Taylor algebras. At the time when we defined them, we were looking for a ``worst case" of Taylor algebras for proving tractability of the Constraint Satisfaction Problem. To avoid the tractability results of \cite{BK}, \cite{IMMVW} and \cite{M-note1}, we were looking for idempotent finite Taylor algebras in which a type $\mathbf{5}$ cover $\beta\prec_5 \gamma$ is directly above a type $\mathbf{2}$ cover $\alpha\prec_2 \beta$ in the congruence lattice (this is almost, but not quite, forced in some finite algebra in the finitely generated variety $\vr v$ when $\vr v$ avoids the cases covered by those three papers). By factoring out $\alpha$ and restricting to one class of $\gamma$, and going to a proper subalgebra, one constructs an idempotent finite Taylor algebra $\m{a}$ with a maximal congruence $\sim$ such that $\m{A}/{\sim}$ is a type $\mathbf{5}$ simple algebra and each ${\sim}$-class is an Abelian algebra, or more generally a Mal'cev algebra (recall that each congruence block is a subuniverse in idempotent algebras). In this section we show that only a small additional assumption is needed to force $\m a$ to be an SMB algebra, which we will define.

We will assume in this section that $\m a=\lb A;w\rb$ is a finite algebra with a wnu operation $w$ as its only fundamental operation. We also assume that ${\sim}\in\cn a$ is an Abelian congruence such that ${\sim}\prec 1_{\sm a}$ and that $\mathrm{typ}(\sim,1_{\sm a})={\mathbf 5}$. Since the top congruence of the cover is $1_{\sm a}$, then, in all minimal sets, the body is the same as the whole minimal set.

From now on and throughout this section, the partial order $\leq$, strict order $<$ and covering relation $\prec$ will always refer to the partial order on $\sim$-classes defined at the end of the last section. We say $x\leq y$ ($x<y$, $x\prec y$) in $A$ iff $[x]_{\sim}\leq [y]_{\sim}$ ($[x]_{\sim}<[y]_{\sim}$, $[x]_{\sim}\prec [y]_{\sim}$). Throughout the paper, we will write $a\sim b$ instead of $(a,b)\in{\sim}$.

\begin{lm}\label{cover}
Let $O\prec I$ be two $\sim$-classes. Then $O\cup I$ is a subuniverse of $\m a$ and $w(x_1,\ldots,x_n)\in I$ iff $\{x_1,\ldots,x_n\}\usub I$, in other words on the semilattice $\lb\{O,I\};\mt\rb$, $w^{\sm a/{\sim}}|_{\{O,I\}}(x_1,\ldots,x_n)=x_1\mt\ldots\mt x_n$.
\end{lm}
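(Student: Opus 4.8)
The claim has two parts: first that $O \cup I$ is a subuniverse, and second the precise description of how $w$ acts on the two-element set $\{O, I\}$ in $\m{a}/{\sim}$. The natural order to proceed is to establish the second part first (the behavior of $w$ on $\{O,I\}$ in the quotient), since once we know $w^{\m{a}/{\sim}}$ maps any tuple of entries from $\{O,I\}$ back into $\{O,I\}$, closure of $O \cup I$ under $w$ in $\m{a}$ follows immediately (a tuple from $O \cup I$ lands in a $\sim$-class that is either $O$ or $I$). So really the heart of the matter is understanding $w$ restricted to $\{O, I\}$ inside $\m{a}/{\sim}$.

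The key tool is tame congruence theory as recalled at the end of Section 2 and the setup of Section 3. Since $O \prec I$ are $\sim$-classes and the cover $({\sim}, 1_{\m{a}})$ has type $\mathbf{5}$, pick an $({\sim}, 1_{\m{a}})$-minimal set $U$ whose body witnesses $O < I$; because the top congruence is $1_{\m{a}}$, the body is all of $U$, so $U$ meets exactly the two $\sim$-classes $O$ and $I$, and the polynomial operations of $\m{a}$ compatible with $U$ induce on $U/({\sim}|_U) \cong \{O, I\}$ exactly the polynomial clone of a two-element semilattice, with $O$ the zero element (the bottom in the semilattice order, matching $O < I$). I would first argue that the term operation $w$ itself, suitably localized, descends to a term operation on this two-element semilattice: via an idempotent unary polynomial $e$ with $e(A) = U$, the map $w_U(x_1,\dots,x_n) := e(w(x_1,\dots,x_n))$ on $U$ is a polynomial compatible with the minimal set, and on $U/({\sim}|_U)$ it is an idempotent operation. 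Then I would use idempotence plus the fact (Lemma \ref{cover} is stated for a general wnu term $w$, and in $\m{a}/{\sim}$ restricted to $\{O,I\}$, the only idempotent WNU operations on a two-element set are the two semilattice operations) to pin down which semilattice operation it is: since $O < I$ in the semilattice order induced by the body — and that order is precisely the one from the end of Section 2, which by construction makes $O$ the absorbing/bottom element — we get $w|_{\{O,I\}} = \mathrm{meet}$, i.e. $w(x_1,\dots,x_n) = O$ unless all $x_i = I$. Equivalently $w^{\m{a}/{\sim}}|_{\{O,I\}}(x_1,\dots,x_n) = x_1 \wedge \cdots \wedge x_n$.

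A cleaner route to the same conclusion, which I would probably prefer to write up, avoids localization arithmetic: we already know from Section 2 that $\leq$ is a subuniverse of $(\m{a}/{\sim})^2$, so $w^{\m{a}/{\sim}}$ respects $\leq$. Hence for entries in $\{O, I\}$, applying $w$ coordinatewise to the pair (tuple of $O$'s, the given tuple) which is $\leq$-below it entrywise and also to (the given tuple, tuple of $I$'s), monotonicity forces $w(\vec{x}) \geq O$ and $w(\vec{x}) \leq I$ — but we must first know that $w(\vec{x}) \in \{O, I\}$. This is where the body of the minimal set is needed: because $O \prec I$ is a covering pair with no $\sim$-class strictly between them, and the connectivity/structure of the type $\mathbf{5}$ body forces the interval to be "simple-semilattice-like", any value $w(\vec{x})$ with all $x_i \in \{O, I\}$ must lie in $\{O, I\}$ — one shows this by substituting one coordinate at a time (using idempotence of $w$ to control the all-equal cases) and invoking that $\{O,I\}$ with the semilattice structure is closed under the compatible polynomials, in particular under $w_U$. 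Given $w(\vec{x}) \in \{O,I\}$, it remains to show $w(\vec{x}) = I$ forces all $x_i = I$: if some $x_j = O$, then (tuple of $I$'s except $O$ in place $j$) is $\leq$ the all-$I$ tuple and $\geq$ some tuple evaluating to $O$ via the semilattice operation on the body, and since $w|_{\{O,I\}}$ is a semilattice operation with $O$ at the bottom, $w(\vec{x})=O$.

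**Main obstacle.** The delicate point is the very first reduction: showing that $w(\vec{x}) \in \{O, I\}$ whenever $\vec{x} \in \{O,I\}^n$, i.e. that the two-element set $\{O,I\}$ is closed under $w$ in $\m{a}/{\sim}$ before we know anything about which operation it induces. This does not follow from monotonicity alone (monotonicity with respect to $\leq$ only traps the value in the interval $[O, I]$ of $\leq$, which a priori could contain other $\sim$-classes even though $O \prec I$, since $\leq$ is only a partial order and $I$ might cover $O$ yet both sit below many incomparable things — but actually within a fixed $\sim$-class the relevant sub-order is between $O$-class and $I$-class and there is nothing between them by the covering assumption; the subtlety is rather that $w(\vec x)$ could jump to a $\sim$-class incomparable to both). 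The resolution must go through the minimal set $U$ and its body: localizing by an idempotent polynomial $e$ onto $U$, the operation $e \circ w$ is a polynomial compatible with $U$, hence induces an operation on the two-element semilattice $U/({\sim}|_U)$, and one then transfers this back using that $e$ moves $O$-elements to the $O$-part of $U$ and $I$-elements to the $I$-part (up to $\sim$). Getting this transfer exactly right — matching the two $\sim$-classes $O, I$ with the two elements of $U/({\sim}|_U)$ and checking the semilattice order agrees with $\leq$ — is the step that requires care, but it is standard type $\mathbf{5}$ bookkeeping once set up.
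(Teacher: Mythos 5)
Your overall strategy for the substantive part of the lemma --- localize to a type $\mathbf{5}$ minimal set $U$ via an idempotent polynomial $e$ with $e(A)=U$, observe that $ew$ induces an operation in the polynomial clone of the two-element semilattice $U/({\sim}|_U)$, use idempotence and the wnu identities to force that operation to be the meet of all variables, and transfer back by monotonicity --- is exactly the paper's argument. However, two points in your write-up are genuinely wrong as stated.

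First, the ``main obstacle'' you identify is not an obstacle, and the workaround you propose in its place does not work. Closure of $\{O,I\}$ under $w^{\m a/{\sim}}$ follows immediately from the fact, quoted at the end of Section 2, that $\leq$ is a subuniverse of $(\m a/{\sim})^2$: since $(O,x_i)\in{\leq}$ and $(x_i,I)\in{\leq}$ for every $i$, applying $w$ coordinatewise and using idempotence yields $O\leq w(\vec{x})\leq I$, so $w(\vec{x})$ is comparable to both $O$ and $I$ and lies in the interval $[O,I]$, which equals $\{O,I\}$ precisely because $O\prec I$ is a covering pair. Your worry that $w(\vec{x})$ ``could jump to a $\sim$-class incomparable to both'' is excluded by this very computation; this one-line monotonicity argument is in fact the paper's entire proof of the subuniverse claim. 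By contrast, the substitute you sketch (``substituting one coordinate at a time'' and invoking closure of $U$ under $ew$) establishes nothing about $w(\vec{x})$ itself: knowing $ew(\vec{x})\in U$ only locates $e(w(\vec{x}))$, not $w(\vec{x})$, and there is no coordinate-by-coordinate induction that controls $w(I,\dots,I,O)$ without monotonicity. If you discard the monotonicity argument, the closure step fails.

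Second, the claim ``the only idempotent WNU operations on a two-element set are the two semilattice operations'' is false (the ternary majority operation on a two-element set is an idempotent wnu that is neither meet nor join), so it cannot be used to pin down $ew$. What is true, and what the paper proves, is the statement relative to the polynomial clone of the two-element \emph{semilattice}: every operation there is a constant or a meet of a subset of its variables; idempotence rules out constants, and the wnu identities rule out proper subsets, since if $ew$ ignored $x_i$ but depended on $x_j$, then $ew(1,\dots,1,0,1,\dots,1)$ would be ${\sim}$-related to $1$ when the $0$ sits in position $i$ and to $0$ when it sits in position $j$, forcing $0\sim 1$. This is the one place where the wnu hypothesis does real work, and it should be spelled out rather than compressed into a false general fact about two-element algebras.
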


\begin{proof}
Since $O\prec I$ and our algebra is idempotent, any term applied to elements of $O\cup I$ must be in a ${\sim}$-class which lies in the interval $[O,I]$. But since $O\prec I$, this interval consists only of classes $O$ and $I$, so $O\cup I$ is a subuniverse of $\m a$. 

Moreover, there must exist a subtrace $\{0,1\}\usub O\cup I$ such that $0\in O$, $1\in I$ and the pseudo-meet polynomial of the minimal set $U\in M_{\sm a}({\sim},1_{\sm a})$ containing $\{0,1\}$ acts like the meet with respect to order $0<1$ (we don't have to mention that they are in the trace - or body - of $U$, as the upper congruence is $1_{\sm a}$). We know that there exists an idempotent polynomial $e\in\poln{1}{a}$ such that $e(A)=U$. Moreover, for the polynomial $ew(x_1,\ldots,x_n)\in\poln{n}{a}|_U$, we know that $ew$ is equal to meet of some of its variables, or is constant. Since $ew(0,0,\ldots,0)=0$, $ew(1,1,\ldots,1)=1$ and $0\nsim 1$, we know that $ew$ is not constant modulo ${\sim}$. We see that $ew$ must be the meet of some of its variables modulo ${\sim}|_{U}$, and let $x_j$ be included in this meet. On the other hand, assume that $x_i$ is not included in the meet, so $ew$ does not depend on $x_i$ on $U/({\sim}|_{U})$. Then $1=ew(1,1,\ldots,1)\sim ew(1,1,\ldots,1,0,1,1,\ldots,1)$, where $0$ is in the $i$th place. But we know that $ew$ does depend on $x_j$, so $ew(1,1,\ldots,1,0,1,1,\ldots,1)\sim 0$, where $0$ is now in the $j$th place. The weak near-unanimity of the operation $w$ implies that $0\sim 1$, a contradiction. So $ew(x_1,\ldots,x_n)=x_1\mt\ldots\mt x_n$ on $U/(\sim_U)$.

Our task will be complete when we prove the same for $w|_{O\cup I}$. Let $\{x_1,\ldots,x_n\}\usub O\cup I$. Clearly, if $\{x_1,\ldots,x_n\}\usub I$, then $w(x_1,\ldots,x_n)\in I$. Otherwise, for some $i\leq n$, $x_i\in O$. Therefore, $(0,0,\ldots,0)\leq (x_1,\ldots,x_n)\leq (y_1,\ldots,y_n)$, where $y_i=0$ and $y_j=1$ for all $j\neq i$. Therefore, $0=w(0,0,\ldots,0)\leq w(x_1,\ldots,x_n)\leq w(y_1,\ldots,y_n)=1\circ_w 0\sim 0$. So we get that $w(x_1,\ldots,x_n)\in O$ in this case.
\end{proof}

\begin{cor}\label{strict}
Let $y<x$ be two elements of $A$. Then $y\leq x\circ_w y<x$.
\end{cor}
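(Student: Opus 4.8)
The plan is to split the assertion into the two inequalities $y\leq x\circ_w y$ and $x\circ_w y<x$ and to argue throughout on the level of $\sim$-classes, using only two ingredients from the material above: first, the fact recorded at the end of Section~2.2 that $\leq$ is a subuniverse of $(\m a/{\sim})^2$, so that $w^{\sm a/{\sim}}$ is monotone for $\leq$, together with the idempotence of $w$ in $\m a/{\sim}$; and second, Lemma~\ref{cover}, which pins down the behaviour of $w$ on a covering pair of $\sim$-classes.

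For $y\leq x\circ_w y$ I would proceed directly: $y<x$ gives $[y]_\sim\leq[x]_\sim$, so applying $w^{\sm a/{\sim}}$ to the relations $[y]_\sim\leq[x]_\sim$ in coordinates $1,\dots,n-1$ and $[y]_\sim\leq[y]_\sim$ in coordinate $n$, monotonicity yields $[y]_\sim=w^{\sm a/{\sim}}([y]_\sim,\dots,[y]_\sim)\leq w^{\sm a/{\sim}}([x]_\sim,\dots,[x]_\sim,[y]_\sim)=[x\circ_w y]_\sim$, where the first equality is idempotence. Thus $y\leq x\circ_w y$; this half uses nothing but monotonicity and idempotence.

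For $x\circ_w y<x$ I would first reduce to a covering pair. Since the poset of $\sim$-classes is finite and the set of classes $D$ with $[y]_\sim\leq D<[x]_\sim$ is nonempty (it contains $[y]_\sim$), it has a $\leq$-maximal element $C$, and maximality forces $C\prec[x]_\sim$. Now apply Lemma~\ref{cover} to $C\prec[x]_\sim$: on the two-element semilattice $\{C,[x]_\sim\}$ (in which $C$ is the bottom) the operation $w^{\sm a/{\sim}}$ is the meet, so $w^{\sm a/{\sim}}([x]_\sim,\dots,[x]_\sim,C)=C$. Finally, monotonicity of $w^{\sm a/{\sim}}$ applied to $[y]_\sim\leq C$ in the last coordinate gives $[x\circ_w y]_\sim=w^{\sm a/{\sim}}([x]_\sim,\dots,[x]_\sim,[y]_\sim)\leq C\prec[x]_\sim$, hence $x\circ_w y<x$.

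I do not anticipate a genuine obstacle: the whole proof is a short bookkeeping exercise combining Lemma~\ref{cover} with monotonicity of $w$ modulo $\sim$. The one spot deserving a line of care is the reduction to a covering pair — i.e.\ extracting from $y<x$ a class $C$ with $[y]_\sim\leq C\prec[x]_\sim$ — but this is immediate from finiteness of the poset of $\sim$-classes, as indicated above.
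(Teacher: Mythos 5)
Your proof is correct and follows essentially the same route as the paper's: both arguments reduce to a class $C$ (the paper's $z$) with $[y]_\sim\leq C\prec[x]_\sim$, apply Lemma~\ref{cover} to that covering pair to get $w^{\sm a/{\sim}}([x]_\sim,\dots,[x]_\sim,C)=C$, and finish by monotonicity of $w$ modulo $\sim$; your version merely spells out the finiteness argument for extracting $C$ and the idempotence-plus-monotonicity step for $y\leq x\circ_w y$, which the paper leaves implicit.
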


\begin{proof}
This follows from the last part of the proof of Lemma~\ref{cover} and monotonicity of operations. Namely, there exists $z\in A$ such that $y\leq z\prec x$. Therefore, $(x\circ_w y)\leq (x\circ_w z)\sim z\prec x$.
\end{proof}

We define weak near-unanimity operations $w_i$ and binary operations $\circ_i$ in $\clo A$ so that $x\circ_i y=w_i(y,x,x,\ldots,x)$, $w_1(x_1,\ldots,x_n)=w(x_1,\ldots,x_n)$ and $$\begin{gathered}w_{i+1}(x_1,\ldots,x_n)=\\
w_i(w_i(x_1,\ldots,x_n)\circ_ix_1,w_i(x_1,\ldots,x_n)\circ_ix_2,\ldots,w_i(x_1,\ldots,x_n)\circ_ix_n).\end{gathered}$$

\begin{thm}\label{iterate}
If $O,I\in A/{\sim}$ and $O<I$, then $O\cup I$ is a subuniverse of $\lb A;w_{|A|}\rb$, and where $\{x_1,\ldots,x_n\}\usub \{O,I\}$ and $\mt$ is the meet semilattice operation on $\{O,I\}$ defined by the order $O<I$, we have $w_{|A|}^{\sm A/{\sim}}(x_1,\ldots,x_n)=x_1\mt\ldots\mt x_n$.
\end{thm}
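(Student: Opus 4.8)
The plan is to prove a stronger statement by induction on a parameter $k$ and then obtain Theorem~\ref{iterate} by taking $k=|A|$. For ${\sim}$-classes $P\leq R$ let $h(P,R)$ be the length of a longest chain $P=C_0\prec\dots\prec C_m=R$; since there are at most $|A|$ classes, $h(P,R)\leq |A|-1$ always, while $h(P,R)=0$ means $P=R$ and $h(P,R)=1$ means $P\prec R$. The statement to prove by induction on $k\geq 1$ is: \emph{whenever $P\leq R$ are ${\sim}$-classes with $h(P,R)\leq k$, and $\bar a=(a_1,\dots,a_n)$ is a tuple with $P\leq[a_j]_{\sim}\leq R$ for all $j$ and $a_l\in P$ for at least one $l$, then $w_k(\bar a)\in P$.} Granting this for $k=|A|$, Theorem~\ref{iterate} follows at once: for $\bar x\in (O\cup I)^n$, either every $x_j\in I$, so $w_{|A|}(\bar x)\in I$ because $I$ is a subuniverse, or some $x_j\in O$, so $w_{|A|}(\bar x)\in O$ by the statement (using $h(O,I)\leq|A|-1\leq|A|$); thus $w_{|A|}$ carries $(O\cup I)^n$ into $O\cup I$ and acts on $\{O,I\}$ modulo ${\sim}$ as $x_1\mt\dots\mt x_n$.

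Two preliminaries feed the induction. First, each $w_i$ is an idempotent wnu operation: $w_1=w$ is, and if $w_i$ is, then a direct computation from idempotence and the wnu identities for $w_i$ shows $w_{i+1}$ is; consequently each ${\sim}$-class and, more generally, each interval $[P,R]$ of ${\sim}$-classes (by idempotence and compatibility of $\leq$ with $w_i$) is closed under every $w_i$. Second, once the inductive statement is known for a given $k$, it says in particular that $w_k$ realizes the meet on every cover modulo ${\sim}$, so the proof of Corollary~\ref{strict} goes through verbatim with $w_k$ in place of $w$: if $y<x$ then $y\leq x\circ_k y<x$.

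The base case $k=1$ is exactly Lemma~\ref{cover} (the subcase $P=R$ being trivial). For the step, assume the statement for $k$, fix $P<R$ with $h(P,R)\leq k+1$ and a tuple $\bar a$ as above with $a_{l_0}\in P$, and put $z=w_k(\bar a)$, $J=[z]_{\sim}$; closure of $[P,R]$ under $w_k$ gives $P\leq J\leq R$. The crucial point is that $J\neq R$: comparing $\bar a$ with the tuple that keeps the $l_0$-th entry and raises every other entry to a fixed element of $R$, monotonicity of $w_k$ and the wnu identities give $[z]_{\sim}\leq R\circ_k P$, which is $<R$ by the preliminary. Fix a class $J'$ with $J\leq J'\prec R$, and consider $\bar a'=(z\circ_k a_1,\dots,z\circ_k a_n)$, for which $w_{k+1}(\bar a)=w_k(\bar a')$. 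For an entry with $a_j\in P$, the statement for $k$ applied to $(a_j,z,\dots,z)$ inside the interval $[P,J]$ (height $\leq k$) gives $z\circ_k a_j\in P$. For an entry with $a_j\notin P$, bounding $a_j$ above by an element of $R$ and $z$ above by an element of $J'$, monotonicity together with the statement for $k$ on the cover $J'\prec R$ gives $[z\circ_k a_j]_{\sim}\leq J'$, while $[z\circ_k a_j]_{\sim}\geq P$ is clear. Hence every entry of $\bar a'$ lies in $[P,J']$, whose height is at most $h(P,R)-1\leq k$, and at least one entry of $\bar a'$ lies in $P$; so the statement for $k$ yields $w_{k+1}(\bar a)=w_k(\bar a')\in P$, completing the induction.

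The step I expect to be the main obstacle is showing $z=w_k(\bar a)\notin R$, since this is exactly what permits the descent to the strictly shorter interval $[P,J']$ and so lets the induction close; it rests on combining monotonicity of $w_k$ with the weak near-unanimity identities and Corollary~\ref{strict} in its $\circ_k$ form. Everything else is routine bookkeeping with the partial order on ${\sim}$-classes and repeated appeals to the inductive hypothesis on shorter intervals.
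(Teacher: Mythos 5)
Your proof is correct. The paper's proof follows the same overall plan---an induction on the maximal length of a covering chain between the two classes, driven by the recursion $w_{k+1}(\bar a)=w_k(w_k(\bar a)\circ_k a_1,\dots,w_k(\bar a)\circ_k a_n)$---but organizes the inductive step differently. The paper inducts only on the binary claim that $x\circ_k y\sim y$ whenever $y<x$ and every covering chain from $y$ to $x$ has length at most $k$; its step splits the interval just \emph{above the bottom}, choosing an intermediate class $z$ with $y$ covered by $z$ and with the chains from $z$ to $x$ of length at most $k$, and verifies $x\circ_{k+1}y\sim y$ by a direct six-term chain of inequalities; the $n$-ary statement is then recovered only at the very end by sandwiching $w_{|A|}(x_1,\dots,x_n)$ between $y$ and $x\circ_{|A|}y$. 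You instead carry the full $n$-ary statement through the induction and split the interval just \emph{below the top}: you first establish $[w_k(\bar a)]_{\sim}\leq R\circ_k P<R$ (your $\circ_k$-version of Corollary~\ref{strict} is legitimate, since it only invokes the inductive hypothesis on covers), pick a coatom $J'\prec R$ above $[w_k(\bar a)]_{\sim}$, and check that every entry of the inner tuple of the recursion lies in the strictly shorter interval $[P,J']$ with at least one entry in $P$. Both routes rest on the same two facts (strict descent of $\circ_k$ across an interval, plus the recursion defining $w_{k+1}$); yours trades the paper's compact but somewhat opaque binary computation for a more transparent descent mechanism at the cost of extra bookkeeping with interval heights (e.g.\ $h(P,J')\leq h(P,R)-1$ because $J'\prec R$, and $h(P,J)\leq k$ because $J<R$), all of which you handle correctly.
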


\begin{proof}
Define $y\prec_k x$ to mean that $y<x$ and any covering chain from $y$ to $x$ has length at most $k$ (in particular, $\prec_1=\prec\cup{\sim}$). We claim that if $y\prec_k x$, then $x\circ_k y\sim y$.

We prove this claim by induction on $k$. The base case is a consequence of Lemma~\ref{cover}. Let the claim hold for $k$. Let $y\prec_{k+1}x$. Then $y\prec_1 z\prec_k x$ for some $z$, and 
$$\begin{gathered}
y\leq x\circ_{k+1} y=((x\circ_k y)\circ_k x)\circ_k((x\circ_k y)\circ_k y)\leq\\ ((x\circ_k z)\circ_k x)\circ_k((x\circ_k z)\circ_k y)\sim
(z\circ_k x)\circ_k(z\circ_k y)\sim \\(z\circ_k x)\circ_k y\leq (x\circ_k z)\circ_k y\sim z\circ_k y\sim y
\end{gathered}$$
The inequality $z\circ_k x\leq x\circ_k z$ follows from $z\leq x$ and monotonicity of $w_k$,  as $z\circ_k x=w_k(z,z,z,\ldots,z,x)\leq w_k(z,x,x,\ldots,x,x)=x\circ_k z$. The remaining inequalities and equalities modulo ${\sim}$ are from the inductive assumption and monotonicity.

Now for any $x\in I$ and $y\in O$, we know that $y<x$, so $y\prec_{|A|}x$. Therefore, $x\circ_{|A|}y\sim y$, so $x\circ_{|A|}y\in O$. Let $(x_1,\ldots,x_n)\in (O\cup I)^n$ be arbitrary and $(y_1,\ldots,y_n)\in\{x,y\}^n$ be such that $x_i\sim y_i$ for all $i$. If there exists $i$ such that $x_i\in O$, then 
$$
\begin{gathered}
y\leq w_{|A|}(x_1,\ldots,x_n)\sim w_{|A|}(y_1,\ldots,y_n)\leq\\
w_{|A|}(x,x,\ldots,x,y,x,x,\ldots, x)=x\circ_{|A|}y\sim y,
\end{gathered}
$$
where the only occurrence of $y$ in the final $w_{|A|}$ is at the $i$th position. Therefore, $w_{|A|}(x_1,\ldots,x_n)\in O$. In the remaining case, all $x_i$ are in $I$, and since $\m a$ is an idempotent algebra and $w_{|A|}$ is its term, then $w_{|A|}(x_1,\ldots,x_n)\in I$.
\end{proof}

\begin{cor}\label{chain}
Let $B_0<B_1<\ldots<B_k$ be ${\sim}$-classes. Then $B=B_1\cup\ldots\cup B_k$ is a subuniverse of $\lb A;w_{|A|}\rb$.
\end{cor}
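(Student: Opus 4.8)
The plan is to deduce this from Theorem~\ref{iterate} by establishing the sharper statement that for \emph{any} $x_1,\dots,x_n\in B$ one has $w_{|A|}(x_1,\dots,x_n)\in B_m$, where $B_m$ is the least among the $\sim$-classes $[x_1]_\sim,\dots,[x_n]_\sim$. This clearly suffices: each $x_j$ lies in some $B_{i_j}$ with $i_j\ge 1$, so $m=\min_j i_j$ satisfies $1\le m\le k$ and hence $w_{|A|}(x_1,\dots,x_n)\in B_m\usub B$. (Observe that $B_0$ plays no real role here beyond supplying a chain of $\sim$-classes; in fact the same argument shows that the union of the $\sim$-classes appearing in any chain in the order $\le$ is a subuniverse of $\lb A;w_{|A|}\rb$.)

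To prove the sharper statement I would pin down $[w_{|A|}(x_1,\dots,x_n)]_\sim$ by squeezing it between $B_m$ from below and $B_m$ from above, using only the monotonicity of $w_{|A|}$ with respect to $\le$ (legitimate because, as recalled in the preliminaries, $\le$ is a subuniverse of $(\m a/{\sim})^2$ and $w_{|A|}$ is a term of $\m a$) together with Theorem~\ref{iterate}. Write $M=\max_j i_j$ and fix $b\in B_m$, $b'\in B_M$, and an index $j_0$ with $i_{j_0}=m$. For the lower bound, from $[b]_\sim=B_m\le B_{i_j}=[x_j]_\sim$ for all $j$ and monotonicity we get $b=w_{|A|}(b,\dots,b)\le w_{|A|}(x_1,\dots,x_n)$, i.e.\ $[w_{|A|}(x_1,\dots,x_n)]_\sim\ge B_m$. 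For the upper bound, compare with $y_1,\dots,y_n$ defined by $y_{j_0}=x_{j_0}$ and $y_j=b'$ for $j\ne j_0$; since $[x_j]_\sim\le B_M=[y_j]_\sim$ for $j\ne j_0$, monotonicity gives $[w_{|A|}(x_1,\dots,x_n)]_\sim\le[w_{|A|}(y_1,\dots,y_n)]_\sim$. If $m=M$ all the $x_j$ already lie in the single subuniverse $B_m$ and we are done; otherwise $B_m<B_M$, so the $\sim$-classes of $y_1,\dots,y_n$ all lie in the two-element chain $\{B_m,B_M\}$, and by Theorem~\ref{iterate} the operation $w_{|A|}$ acts there as the meet of the semilattice $\lb\{B_m,B_M\};\mt\rb$; as one of its arguments is the bottom element $[x_{j_0}]_\sim=B_m$, we obtain $[w_{|A|}(y_1,\dots,y_n)]_\sim=B_m$, hence $[w_{|A|}(x_1,\dots,x_n)]_\sim\le B_m$. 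Combining the two bounds gives $[w_{|A|}(x_1,\dots,x_n)]_\sim=B_m$.

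I do not expect a genuine obstacle; the one point requiring a little care is that $w_{|A|}$ is merely a weak near-unanimity term and therefore not symmetric in its arguments, so one cannot permute the $x_j$ freely and must carry out the two comparisons coordinatewise as above. This causes no trouble, since the meet statement supplied by Theorem~\ref{iterate} is itself symmetric.
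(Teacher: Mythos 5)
Your proof is correct and is essentially the paper's own argument: the paper likewise sandwiches $w_{|A|}(x_1,\dots,x_n)$ between a representative $x_i$ of the least class (via $x_i = w_{|A|}(x_i,\dots,x_i)\leq w_{|A|}(x_1,\dots,x_n)$) and the near-unanimous evaluation $w_{|A|}(x_j,\dots,x_j,x_i,x_j,\dots,x_j)=x_j\circ_{|A|}x_i\sim x_i$ with $x_j$ in the greatest class, using monotonicity of $\leq$ and the conclusion of Theorem~\ref{iterate}. Your extra care about the non-symmetry of the wnu term and your observation that $B_0$ is inessential are both sound but do not change the substance.
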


\begin{proof}
Let $x_1,\ldots,x_n\in B$ and assume that $x_i$ is in the least, while $x_j$ in the greatest ${\sim}$-class among them. Then $x_i\leq w_{|A|}(x_1,\ldots,x_n)\leq w_{|A|}(x_j,x_j,\ldots,$ $x_j,x_i,x_j,x_j,\ldots,x_j)=x_j\circ_{|A|}x_i\sim x_i$.
\end{proof}

\begin{cor}\label{special}
Assume that $w$ is a special weak near-unanimity term. If $x>y$ then $B=x/{\sim}\cup (x\circ_w y)/{\sim}$ is a subuniverse of $\m a$ and $w$ acts on $B$ (modulo ${\sim}$) like the meet of all variables.
\end{cor}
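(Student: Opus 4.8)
The plan is to reduce everything to two facts: that $z:=x\circ_w y$ lies strictly below $x$ in the order on ${\sim}$-classes, which is exactly Corollary~\ref{strict}, and that the special identity yields $x\circ_w z=x\circ_w(x\circ_w y)=x\circ_w y=z$. Writing $I=x/{\sim}$ and $O=z/{\sim}$, Corollary~\ref{strict} gives $O<I$, so in particular $O\ne I$. I will then show directly that $B=I\cup O$ is closed under $w$ and that, modulo ${\sim}$, $w$ restricted to $B$ is the meet of all its arguments with respect to $O<I$; since $w$ is the only fundamental operation of $\m a$, that is the entire assertion.

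So I would take $x_1,\dots,x_n\in B$. First the easy case: if every $x_i$ lies in $I$, then $w(x_1,\dots,x_n)\sim w(x,\dots,x)=x$ because ${\sim}$ is a congruence, so the value is in $I$. Otherwise put $S=\{i:x_i\sim z\}\ne\es$. For the lower bound, since $z<x$ we have $z\le x_i$ for every $i$, so monotonicity of $w$ on $\m a/{\sim}$ together with idempotence gives $z=w(z,\dots,z)\le w(x_1,\dots,x_n)$. For the upper bound, fix $i_0\in S$ and let $\vec v$ be the $n$-tuple with $v_{i_0}=z$ and $v_j=x$ for $j\ne i_0$; then componentwise modulo ${\sim}$ one has $x_j\le v_j$ for all $j$ (coordinate $i_0$ is ${\sim}$-equal, the other coordinates in $S$ move up from $O$ to $I$, and the coordinates outside $S$ are unchanged), while by weak near-unanimity and the special identity $w(\vec v)=x\circ_w z=z$, so monotonicity gives $w(x_1,\dots,x_n)\le z$. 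Combining the two bounds, $w(x_1,\dots,x_n)\sim z$, i.e.\ the value lies in $O$. This simultaneously shows that $B$ is a subuniverse and that $w$ acts on $B$, modulo ${\sim}$, as the meet of all its variables.

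I do not anticipate a genuine obstacle. The only points needing care are that $\le$ really is compatible with $w$ on the quotient $\m a/{\sim}$ — this is part of the standard type-$\mathbf 5$ structure theory quoted at the end of Section~2, available here because the top congruence of the cover is $1_{\sm a}$, so all ${\sim}$-classes sit inside a single $1_{\sm a}$-block — and that the strict inequality $z<x$ is obtained only through Corollary~\ref{strict} rather than re-derived by hand. Unlike Theorem~\ref{iterate}, which had to climb covering chains using the iterated terms $w_i$ and hence needed an induction on chain length, here the special identity collapses the argument to a single use of weak near-unanimity, so no such induction is required.
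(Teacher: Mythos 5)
Your proof is correct and takes essentially the same route as the paper's: what the paper compresses into ``using the previous ideas'' is exactly the monotonicity sandwich you spell out, bounding $w(x_1,\dots,x_n)$ below by $z=w(z,\dots,z)$ and above by $w(x,\dots,x,z,x,\dots,x)=x\circ_w(x\circ_w y)=x\circ_w y=z$. No gaps.
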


\begin{proof}
Using the previous ideas, it suffices to prove that, where $x_1,\ldots,x_n\in B$ and exactly one of $x_i$ is in $(x\circ_w y)/{\sim}$, then $w(x_1,\ldots,x_n)\in (x\circ_w y)/{\sim}$. But, in this case \[w(x_1,\ldots,x_n)\sim w(x,x,\ldots,x,x\circ_w y,x,x,\ldots,x)=x\circ_w(x\circ_w y)=x\circ_w y.\]
\end{proof}

For the next three Lemmas and Theorems, $\circ$ will denote $\circ_{w_{|A|}}$.

\begin{lm}\label{up-down}
Let $B$,$C$ and $D$ be ${\sim}$-classes and $B\leq D$, $C\leq D$. Then for any $x\in B$ and $y\in C$, $x\circ y$ is in a ${\sim}$-class $E$ which is the greatest lower bound of $B$ and $C$.
\end{lm}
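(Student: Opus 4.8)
The plan is to argue at the level of elements, using that $w_{|A|}$ (and hence $\circ=\circ_{w_{|A|}}$) is monotone for $\leq$ on $A/{\sim}$ — this is the compatibility of $\leq$ with the operations of $\m a/{\sim}$ recorded at the end of Section~2 — together with Theorem~\ref{iterate} in the following form: if $P$ and $Q$ are ${\sim}$-classes with $P\leq Q$, then for all $p\in P$ and $q\in Q$ one has $p\circ q\sim p$ and $q\circ p\sim p$ (when $P<Q$ this holds because $w_{|A|}$ restricts to $\{P,Q\}$ as the meet operation, and when $P=Q$ it is just idempotence modulo ${\sim}$). Recall also that $x\circ y=w_{|A|}(y,x,\dots,x)$.

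First I would check that $[x\circ y]_{\sim}$ is a common lower bound of $B$ and $C$. Fix $d\in D$; then $x\leq d$ and $y\leq d$, since $B\leq D$ and $C\leq D$. By monotonicity,
\[
x\circ y=w_{|A|}(y,x,\dots,x)\leq w_{|A|}(d,x,\dots,x)=x\circ d\sim x,
\]
the last step because $B=[x]_{\sim}\leq[d]_{\sim}$; hence $[x\circ y]_{\sim}\leq B$. Likewise
\[
x\circ y=w_{|A|}(y,x,\dots,x)\leq w_{|A|}(y,d,\dots,d)=d\circ y\sim y,
\]
using $C=[y]_{\sim}\leq[d]_{\sim}$, so $[x\circ y]_{\sim}\leq C$.

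Next I would show that $[x\circ y]_{\sim}$ dominates every common lower bound of $B$ and $C$; combined with the previous step, this proves that the greatest lower bound of $B$ and $C$ exists and equals $[x\circ y]_{\sim}$. So let $F$ be a ${\sim}$-class with $F\leq B$ and $F\leq C$, and pick $f\in F$. Then $f\leq x$, hence by monotonicity
\[
x\circ y=w_{|A|}(y,x,\dots,x)\geq w_{|A|}(y,f,\dots,f)=f\circ y\sim f,
\]
the last step because $F=[f]_{\sim}\leq[y]_{\sim}$. Therefore $[x\circ y]_{\sim}\geq F$, as needed, and the proof is complete.

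I do not expect a genuine obstacle here; the only point requiring care is the bookkeeping — each use of monotonicity and each reduction ``$p\circ q\sim p$'' has to be licensed by the correct one of the hypotheses $B\leq D$, $C\leq D$, $F\leq B$, $F\leq C$, and Theorem~\ref{iterate} may be invoked only between comparable classes. An alternative packaging would be to observe first that $D'=\bigcup\{E\in A/{\sim}:E\leq D\}$ is a subuniverse of $\lb A;w_{|A|}\rb$ (again by monotonicity and idempotence of $w_{|A|}$) and to carry out the argument inside $D'$, but the element-level computation above already supplies everything that is needed.
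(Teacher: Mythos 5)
Your proof is correct and takes essentially the same route as the paper's: first show $[x\circ y]_{\sim}$ is a common lower bound of $B$ and $C$ by comparing with $x\circ d$ and $d\circ y$ for $d\in D$ (using monotonicity of $\leq$ and the meet behaviour from Theorem~\ref{iterate}), then show it dominates every common lower bound. The only cosmetic difference is in the second half, where the paper uses idempotence directly ($a=a\circ a\leq x\circ y$ for $a$ in a common lower bound) instead of your reduction via $f\circ y\sim f$.
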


\begin{proof}
Let $z\in D$ be arbitrary. According to Theorem~\ref{iterate}, $x\leq x\circ z\leq z\circ x\sim x$, so $x\circ z\sim x$, and also $z\circ y\sim y$. Therefore, from $x\circ y\leq x\circ z\sim x$ and $x\circ y\leq z\circ y\sim y$, we get that $x\circ y$ is in a ${\sim}$-class which is a lower bound for $[x]_{\sim}$ and $[y]_{\sim}$. On the other hand, let us assume that $a\leq x$ and $a\leq y$ is any element of a class which is a lower bound for $[x]_{\sim}$ and $[y]_{\sim}$. Then $a=a\circ a\leq x\circ y$, so $[x\circ y]_{\sim}$ is the greatest lower bound for $[x]_{\sim}$ and $[y]_{\sim}$.
\end{proof}

\begin{thm}\label{smallest}
In the order $\leq$ there always exists the least ${\sim}$-class.
\end{thm}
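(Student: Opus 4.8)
The plan is to fix a $\leq$-minimal $\sim$-class and show that it is in fact the least one. Since $\m a$ is finite, $A/{\sim}$ is a finite set and, by the discussion at the end of Section~2, $\leq$ is a genuine partial order on it (here the top congruence of the cover is $1_{\sm a}$, so there is a single $1_{\sm a}$-class and $\leq$ really is defined on \emph{all} $\sim$-classes). Hence there is at least one $\leq$-minimal class; call it $M$. It then suffices to prove that $M\leq B$ for every $\sim$-class $B$, and for this I would walk from $M$ to $B$ using connectedness.

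Concretely: by connectedness of $\leq$ (again Section~2), for any $\sim$-class $B$ there is a finite sequence $M=B_0,B_1,\dots,B_k=B$ in which consecutive classes $B_j,B_{j+1}$ are $\leq$-comparable. I argue by induction on $j$ that $M\leq B_j$. The case $j=0$ is trivial. For the inductive step assume $M\leq B_j$. If $B_j\leq B_{j+1}$, then $M\leq B_{j+1}$ by transitivity. Otherwise $B_{j+1}\leq B_j$, so $M$ and $B_{j+1}$ both lie below the common upper bound $B_j$; by Lemma~\ref{up-down} they have a greatest lower bound $E$ (namely the class of $x\circ y$ for $x\in M$, $y\in B_{j+1}$), and in particular $E\leq M$. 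Minimality of $M$ forces $E=M$, so $M=E\leq B_{j+1}$. Taking $j=k$ yields $M\leq B$, and since $B$ was arbitrary, $M$ is the least $\sim$-class.

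The only nontrivial ingredient is Lemma~\ref{up-down}: a connected finite poset need not have a least element (two incomparable atoms below a common top is connected but has no bottom), and it is exactly the existence of meets for pairs with a common upper bound — which comes from the semilattice behaviour of $\circ=\circ_{w_{|A|}}$ established earlier — that rules such configurations out and lets connectedness finish the job. So I do not expect a real obstacle here; the one thing to state carefully is that $\leq$ is a connected partial order on the set of \emph{all} $\sim$-classes, which is what legitimizes the inductive walk between two arbitrary classes.
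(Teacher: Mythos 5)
Your proof is correct and follows essentially the same route as the paper's: both arguments take a $\leq$-minimal class, use connectedness of $\leq$ on $A/{\sim}$ to walk to an arbitrary class, and apply Lemma~\ref{up-down} to produce a common lower bound that minimality forces to equal the chosen class. The paper phrases this as a fence-shortening contradiction rather than your induction along the walk, but the content is identical.
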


\begin{proof}
Assume that $B$ is a minimal ${\sim}$-class and that it is not the smallest. Then there exists a ${\sim}$-class $C$ which is not greater than or equal to $B$. By connectedness of $\leq$ on $A/{\sim}$, we know that there is a fence $B=B_0\leq B_1\geq B_2\leq B_3\geq\ldots\geq B_{2k}=C$. Let us consider such a fence with $k$ minimal. According to Lemma~\ref{up-down} there exists a common lower bound $B_1'$ for $B$ and $B_2$. But since $B_1'$ is a lower bound for $B$, and $B$ is minimal, then $B_1'=B$ and $B\leq B_2\leq B_3$. So, $B=B_0\leq B_3\geq\ldots\geq B_{2k}=C$ is also a fence which contradicts minimality of $k$.
\end{proof}

\begin{thm}\label{greatest}
Let $\leq$ have the largest element. Then every pair of ${\sim}$-classes has a greatest lower bound and $w_{|A|}(x_1,\ldots,x_n)\in [x_1]_{\sim}\mt\dots\mt [x_n]_{\sim}$. In particular, if $\m a$ is simple (i. e. ${\sim}=0_{\sm a}$), and $\leq$ has a greatest element, then $\m a$ has a semilattice term-reduct.
\end{thm}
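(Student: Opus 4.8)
The plan is to deduce the order-theoretic claim directly from Lemma~\ref{up-down}, and then to locate the $\sim$-class of $w_{|A|}(x_1,\dots,x_n)$ by trapping it between two comparable classes, using monotonicity of $w_{|A|}$ for one containment and the two-element-chain behaviour of $w_{|A|}$ (Theorem~\ref{iterate}) for the other. Let $T$ be the largest $\sim$-class. For any two $\sim$-classes $B,C$ we have $B\leq T$ and $C\leq T$, so Lemma~\ref{up-down} with $D=T$ shows that $[x\circ y]_\sim$ is the greatest lower bound of $B$ and $C$ for all $x\in B$, $y\in C$; in particular all greatest lower bounds exist, so $\lb A/{\sim};\leq\rb$ is a finite meet-semilattice (with top), whose meet we write $\mt$.

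Now fix $x_1,\dots,x_n\in A$, put $B_j=[x_j]_\sim$, $G=B_1\mt\dots\mt B_n$ and $z=w_{|A|}(x_1,\dots,x_n)$; the assertion $w_{|A|}(x_1,\dots,x_n)\in B_1\mt\dots\mt B_n$ amounts to $[z]_\sim=G$, which I prove by showing $G\leq[z]_\sim$ and $[z]_\sim\leq B_i$ for every $i$. For the first inequality, pick $g\in G$; since $G\leq B_j$ for all $j$ we get $g\leq x_j$ in $A$, so idempotence and monotonicity of $w_{|A|}$ give $g=w_{|A|}(g,\dots,g)\leq w_{|A|}(x_1,\dots,x_n)=z$. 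For the second, fix $i$, choose $t\in T$, and note $x_j\leq t$ in $A$ for all $j$ because $B_j\leq T$; let $\vec y$ be the $n$-tuple with $y_i=x_i$ and $y_j=t$ for $j\neq i$, so $x_j\leq y_j$ for all $j$ and hence $z\leq w_{|A|}(\vec y)$ by monotonicity. All coordinates of $\vec y$ lie in $B_i\cup T$, which is a subuniverse (it equals $T$ if $B_i=T$, and by Theorem~\ref{iterate} if $B_i<T$), and $[w_{|A|}(\vec y)]_\sim$ equals $T=B_i$ in the first case and, by Theorem~\ref{iterate}, equals $[y_1]_\sim\mt\dots\mt[y_n]_\sim=B_i\mt T\mt\dots\mt T=B_i$ in the second. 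So $w_{|A|}(\vec y)\in B_i$, giving $[z]_\sim\leq B_i$. Thus $[z]_\sim$ is a lower bound of $B_1,\dots,B_n$ lying above their meet $G$, which forces $[z]_\sim=G$, as required.

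For the final sentence, suppose $\m a$ is simple, so $\sim=0_{\sm a}$; then every $\sim$-class is a singleton, $\leq$ is a meet-semilattice order on $A$ itself with a top element, and what we just proved becomes the identity $w_{|A|}(x_1,\dots,x_n)=x_1\mt\dots\mt x_n$ in $\m a$. Since conversely $x\mt y=w_{|A|}(x,x,\dots,x,y)$ is a term operation of $\m a$, the semilattice $\lb A;\mt\rb$ is a term-reduct of $\m a$.

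The only step I expect to require care is the containment $[z]_\sim\leq B_i$: monotonicity by itself only compares $z$ with elements sitting above it, so the real point is that after raising every coordinate other than the $i$-th to the top class $T$, the operation $w_{|A|}$ is forced back down into $B_i$ — which is exactly the assertion of Theorem~\ref{iterate} that $w_{|A|}$ acts as meet on the two-element chain $\{B_i,T\}$. Everything else is just unwinding the definition of $\leq$ on $A$ together with idempotence, and should present no difficulty.
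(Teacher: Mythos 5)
Your proof is correct and follows exactly the route the paper intends: the paper's own proof is the one-line remark that the theorem ``follows in a straightforward way from Lemma~\ref{up-down}'', and your argument is a faithful fleshing-out of that, using Lemma~\ref{up-down} with $D=T$ for the existence of greatest lower bounds and then trapping $[w_{|A|}(x_1,\dots,x_n)]_{\sim}$ between the meet and each $[x_i]_{\sim}$ via monotonicity and the two-chain behaviour from Theorem~\ref{iterate}. No gaps.
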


\begin{proof}
This follows in a straightforward way from Lemma~\ref{up-down}.
\end{proof}

\begin{thm}\label{SMB-justify}
Let $\m a=(A,w)$ be a wnu algebra, ${\sim}\in\cn a$ a maximal congruence, $\mathrm{typ}({\sim}, 1_{\sm a})=\mathbf{5}$ and $\sim$ is an Abelian congruence. If there exists a largest $\sim$-class with respect to $\leq$, then there is a binary term operation $\mt$ of $\m a$ such that $(\m a/\sim,\mt)$ is a semilattice, and on each $\sim$-class the restriction of $\mt$ acts as the second projection.
\end{thm}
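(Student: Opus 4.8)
The plan is to keep the quotient behaviour that is already available and to repair what happens inside the $\sim$-blocks with the help of a difference term. Put $\circ:=\circ_{w_{|A|}}$, so that $x\circ y=w_{|A|}(y,x,\dots ,x)$ is an idempotent binary term of $\m a$. Since $(A/{\sim},\leq )$ has a greatest element, Theorem~\ref{greatest} shows that $w_{|A|}$, and hence $\circ$, acts on $\m a/{\sim}$ as the meet $\mt$ of the semilattice $(A/{\sim},\leq )$; thus $x\circ y\in [x]_{\sim}\mt [y]_{\sim}$ for all $x,y$, and in particular $x\sim y$ implies $x\circ y\sim y$. So on the quotient every term built from copies of $\circ$ in which each of the two original variables still occurs in a ``meet position'' induces the semilattice operation $\mt$; the one defect of $\circ$ itself is that its restriction to a $\sim$-block need not be a second projection.

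The blocks, on the other hand, are affine. As $\m a=(A,w)$ is a wnu algebra, $V(\m a)$ omits type $\mathbf{1}$ (see \cite{MM}), and, being locally finite, it has a weak difference term $d$: the identity $d(x,x,y)\approx y$ holds in $V(\m a)$ and $d(b,a,a)\equiv b\pmod{[\theta ,\theta ]}$ whenever $(a,b)\in\theta $ (see \cite{HM},\cite{KSz}). Since $\sim$ is Abelian, $[\sim ,\sim ]=0_{\sm a}$, so $d$ restricts to a Mal'cev operation on each $\sim$-class $N$ (a subuniverse of $\m a$), and $N$, being a block of an Abelian congruence, is itself Abelian; an Abelian algebra with a Mal'cev term is affine, so $N$ is polynomially equivalent to a module $M_N$ with $d|_N(u,v,w)=u-v+w$. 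Hence every idempotent binary term $t$ of $\m a$ acts on $N$ as $t|_N(x,y)=\varphi (x)+(1-\varphi )(y)$ for a ring element $\varphi =\varphi ^{t}_{N}$ of $\mathrm{End}(M_N)$, and, because $w_{|A|}$ is a weak near-unanimity term, $\varphi ^{\circ}_{N}$ is pinned to the element ``$1-n^{-1}$'' of $\mathrm{End}(M_N)$, uniformly in $N$ (here $n$ is the arity of $w_{|A|}$).

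A natural first candidate is
\[
\mt (x,y):=d\bigl(y\circ x,\ x,\ x\circ y\bigr).
\]
On a block $N$ (so $x\sim y$ and therefore $y\circ x,x\circ y\in N$) the identities $d|_N(u,v,w)=u-v+w$ and $u\circ u=u$ give $d(y\circ x,x,x\circ y)=(y\circ x)-x+(x\circ y)=y$ (the coefficient of $x$ is $(1-\varphi^\circ_N)-1+\varphi^\circ_N=0$ and that of $y$ is $\varphi^\circ_N+(1-\varphi^\circ_N)=1$), so this $\mt $ is a second projection on every block. On $\m a/{\sim}$ it acts by $(p,q)\mapsto \bar d(p\mt q,\,p,\,p\mt q)$, so we would be done provided $\bar d(a,b,a)=a$ whenever $a\leq b$ in $\m a/{\sim}$. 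This holds if the clone of $\m a/{\sim}$ is generated by $\mt $ (then $d(x,x,y)\approx y$ forces $\bar d$ to be a third projection), but it need not hold for an arbitrary simple type-$\mathbf{5}$ quotient, so the candidate may need correction.

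The genuine obstacle — and where I expect the real work to lie — is to manufacture a single term that is a second projection on \emph{every} block at once while still inducing a semilattice operation on $\m a/{\sim}$. The obvious fix, namely replacing the middle argument of $\mt $ by an iterate of $\circ$ so that on each block the coefficient $\varphi ^{\circ}_N$ is raised to its idempotent power $(\varphi ^{\circ}_N)^{|A|!}$ and the $d$-subtraction annihilates it, fails: $\bar\circ=\mt $ is already idempotent on the quotient, so such an iterate agrees with $\circ$ there and $\mt $ degenerates to the plain second projection, which is not a semilattice operation on $\m a/{\sim}$. To get past this I would use the affineness of the blocks together with the uniformity of $\varphi ^{\circ}_N$ and induct on the height of $(A/{\sim},\leq )$: the greatest $\sim$-class carries no constraint from the quotient beyond idempotence, so a second projection can be installed on it outright, and one then descends, at each step using $d$ to cancel the residual idempotent part on the block just added while keeping the blocks already handled at their second projection and the quotient operation equal to $\mt $. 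The existence of the greatest $\sim$-class is precisely what lets this induction start, and the simultaneous bookkeeping at the inductive step — in particular, verifying that ``second projection on a given block'' can be realised by a term of $\m a$ whose action on $\m a/{\sim}$ is $\mt $ — is the part I expect to be the most delicate.
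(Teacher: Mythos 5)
There is a genuine gap, and you have in fact located it yourself: your candidate $\mt(x,y):=d(y\circ x,\,x,\,x\circ y)$ is a second projection on each block (the computation $(y\circ x)-x+(x\circ y)=y$ is correct and does not even need $\beta$ idempotent), but its action on $\m a/{\sim}$ is $(p,q)\mapsto \bar d(p\mt q,\,p,\,p\mt q)$, and nothing constrains $\bar d$ on an arbitrary simple type-$\mathbf 5$ quotient. The repair you sketch --- an induction on the height of $(A/{\sim},\leq)$, installing the second projection block by block while ``keeping the quotient operation equal to $\mt$'' --- is exactly the unproved part, and it is not clear it can be carried out: a term of $\m a$ has a single quotient action and a single action on each block, and there is no mechanism in your sketch for modifying one block without disturbing the others or the quotient. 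So the proof is not complete as it stands. (Two smaller inaccuracies: a weak difference term satisfies $d(x,x,y)\equiv y$ only modulo $[\theta,\theta]$, not as an identity of $V(\m a)$, though this is harmless on Abelian blocks; and the coefficient of $\circ_{w_{|A|}}$ on a block is not literally $1-n^{-1}$, since $w_{|A|}$ is itself a nontrivial iterate of $w$.)

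The idea you are missing is that the difference term can be dispensed with entirely, which dissolves the quotient problem. The paper first replaces $w_{|A|}$ by a special wnu term $v$ (Lemma~\ref{fulltreeiteration}), i.e.\ one with $x\circ_v(x\circ_v y)\approx x\circ_v y$; since $x\circ_v y$ is an iterate of $y\mapsto x\circ_{w_{|A|}}y$ and $\circ_{w_{|A|}}$ is already an idempotent semilattice operation on the quotient, $\circ_v$ still acts as $\mt$ on $\m a/{\sim}$. Writing $x\circ_v y=\alpha x+\beta y$ on a block, the special identity forces $\beta^2=\beta$, hence $\alpha\beta=0$ and $\alpha^2=\alpha$, and then
\[
x\mt y:=(y\circ_v x)\circ_v y=\alpha^2 y+\alpha\beta x+\beta y=(\alpha+\beta)y=y
\]
on every block, while on the quotient $(q\circ_v p)\circ_v q=(q\mt p)\mt q=p\mt q$ by the semilattice laws. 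Because the correcting term is built only from $\circ_v$, its quotient behaviour is automatic --- precisely the bookkeeping your construction cannot control once $d$ enters the term.
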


\begin{proof}
First we iterate $w$ like in the proof of Theorem~\ref{iterate} until we obtain the wnu term $w_{|A|}$. Then Theorem~\ref{greatest} guarantees that $(\m a/\sim,\circ_{w_{|A|}})$ is a semilattice. Next we iterate $w_{|A|}$ like in Lemma~\ref{fulltreeiteration} until we obtain a special wnu term $v$. Since the iteration used in the proof of Lemma~\ref{fulltreeiteration} implies that 
\[x\circ_v y = x\circ_{w_{|A|}}(x\circ_{w_{|A|}}(\dots x\circ_{w_{|A|}}(x\circ_{w_{|A|}}y)\dots)),\]
hence the operations $\circ_w$ and $\circ_v$ act the same way in $\m a/{\sim}$, i.e. $(\m a/\sim,\circ_v)$ is a semilattice.

Now we assume some $\sim$-class $B\in A/{\sim}$ is fixed and consider the operation $x\circ_v y$, where $x,y\in B$. By Abelianness, there exists a ring $\m r$ and a faithful (the ring elements injectively correspond to the endomorphisms) $\m r$-module $\m b$ with universe $B$ such that all term operations of $\m a$ which map $B^n$ into $B$ are polynomial operations of the module $\m b$.

Hence, the restriction of $\circ_v$ to $B$ equals in $\m b$ to
\[x\circ_v y=\alpha x+\beta y+c,\]
Plugging the zero of the module for $x$ and $y$ we obtain $c=0$ from idempotence, while the idempotence of $\m a$ implies $(\alpha+\beta)x=x$ and the faithfulness implies that 
\[\alpha+\beta=1.\]
From the special condition $x\circ_v(x\circ_v y)=x\circ_v y$ and we obtain $(\alpha+\alpha\beta)x+\beta^2 y = \alpha x+\beta y$, so plugging zero for $x$ and by faithfulness we obtain
\[\beta^2=\beta.\]
From this follows $\alpha\beta=(1-\beta)\beta=\beta-\beta^2=0$ and $\alpha^2=1-2\beta+\beta^2=1-\beta=\alpha$. Now we define
\[x\mt y:=(y\circ_v x)\circ_v y.\]
On each $\sim$-class $B$ we have by the above argument that
\[x\mt y = \alpha^2 y+\alpha\beta x + \beta y=(\alpha+\beta) y=y.\]
On the other hand, in $\m a/{\sim}$ by the laws of semilattices $\mt$ acts in precisely the same way as $\circ_v$ and $\circ_{w_{|A|!}}$.
\end{proof}

On the other hand, in any finite Taylor algebra, there is a ternary term operation $d(x,y,z)$ which acts as a Mal'cev operation on each congruence class of an Abelian (or even a solvable) congruence. This follows from \cite{HM}, Theorem 9.6 and Lemma 9.4. This observation and Theorem~\ref{SMB-justify} inspire the definition of semilattices of Mal'cev blocks, or SMB algebras:

\renewcommand{\labelenumi}{($\roman{enumi}$)}
\begin{df}\label{SMBdef}
Let $\m A=(A; \mt ,d)$ be an idempotent algebra and ${\sim}\in\cn a$. We say that $\m a$ is a semilattice of Mal'cev blocks with respect to ${\sim}$, SMB algebra over ${\sim}$ for short, if
\begin{enumerate}
\item $(A/{\sim};\mt^{\sm a/{\sim}})$ is a semilattice and
\item on each ${\sim}$-class $D$, the operation $\mt{\upharpoonright_D}$ acts as the second projection, while $d{\upharpoonright_D}$ acts as a Mal'cev operation.
\end{enumerate}
We say that $\m A=(A; \mt ,d)$ is a semilattice of Mal'cev blocks, SMB algebra for short, if $\m A$ is an idempotent algebra such that there exists a congruence ${\sim} \in \cn A$ so that $\m a$ is an SMB algebra over ${\sim}$. We denote the class of all SMB algebras by $\vr S$.
\end{df}

We feel that the assumptions we needed for Theorem~\ref{SMB-justify} are not too restrictive. If type $\mathbf{2}$ does not appear in $\vr v(\m a)$, then $\vr v(\m a)$ is congruence meet-semidistributive, a significantly better understood class than Taylor varieties. Similarly, if type $\mathbf{5}$ does not appear in $\vr v(\m a)$, then $\vr v(\m a)$ satisfies a nontrivial congruence identity, another more restrictive class. Whenever both types $\mathbf{2}$ and $\mathbf{5}$ appear in the congruence lattice of the same Taylor algebra, and some type $\mathbf{2}$ cover $\alpha\prec\beta$ is directly below a type $\mathbf{5}$ cover $\beta\prec\gamma$, by restricting to the subuniverse which is one $\gamma$-class and factoring out $\alpha$, we arrive at a situation which is not exactly the same as the one we had in Theorem~\ref{SMB-justify}, but similar enough to make our proofs work. The idea is to note that $\beta$ is still Abelian, and one can take a nonminimal $\beta$-class $C$ (with respect to the type $\mathbf{5}$ order) and the union of all $\beta$-classes below or equal to $C$ within the chosen $\gamma$-class constitute a subalgebra on which Theorem~\ref{SMB-justify} applies. Thus we obtain an SMB algebra consisting of $C$ and all $\beta$-classes below it.

Now we turn to a bad example which shows that, when the type $\mathbf{5}$ order is flat, pretty much anything can happen, i.e. we have almost no control of the wnu operation  or its derived operation $\circ_w$:

\begin{prp}
Let $\m a=\lb A;w\rb$ be any finite wnu algebra. Then there exists a finite simple algebra $\m b=\lb B;v\rb$ of type {\bf 5} such that $v$ is a weak near-unanimity operation on $B$ of the same arity as $w$ and $\m a\leq \m b$.
\end{prp}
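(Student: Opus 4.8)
The plan is to embed $\m a$ into a larger algebra $\m b$ on a set $B = A \cup \{*\}$ (or perhaps $B$ somewhat larger) in such a way that $\m a$ is a subalgebra, the new operation $v$ is still a weak near-unanimity operation of the same arity $n$, and the resulting algebra is simple of type $\mathbf 5$. The natural first attempt is to take $B = A \cup \{0\}$ where $0$ is a fresh ``bottom'' element, and to define $v$ so that $0$ behaves like a semilattice zero: $v(x_1,\dots,x_n) = 0$ whenever some $x_i = 0$, and $v(x_1,\dots,x_n) = w(x_1,\dots,x_n)$ when all $x_i \in A$. One checks immediately that $v$ is idempotent and wnu (the wnu identities only permute arguments, so they are inherited from $w$ on $A^n$ and are trivially true whenever a $0$ appears), and that $\m a \le \m b$. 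The operation $x \circ_v y$ then satisfies $0 \circ_v y = 0$ and $x \circ_v 0 = 0$, so $0$ is absorbing for $\circ_v$.

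The second step is to arrange simplicity and type $\mathbf 5$. With $B$ as above, a single added absorbing element need not make the algebra simple, since $\m a$ may already have nontrivial congruences; so more likely one should take $B$ to be $A$ with \emph{two} new elements, or should first pass to a suitable ``coordinatization''. A cleaner route: let $\m b$ have universe $A \cup \{0, 1\}$ with $0 < 1$ and every element of $A$ placed strictly above $1$ in a semilattice-with-zero structure, i.e. set $v(x_1,\dots,x_n)$ to be $0$ if some $x_i=0$; $1$ if no $x_i=0$ but some $x_i=1$ and not all equal; and $w$ applied to the $A$-part otherwise (with a fixed convention, e.g. $v = w$ whenever all arguments lie in $A$). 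One then shows every congruence of $\m b$ other than $0_{\sm b}$ collapses $0$ with $1$, and that the minimal set for the cover $0_{\sm b} \prec 1_{\sm b}$ realized on $\{0,1\}$ carries exactly the two-element semilattice polynomial clone, giving type $\mathbf 5$; the key computation is that $v$ restricted to $\{0,1\}$ is meet and that no polynomial can separate points of $A$ while fixing the pair $\{0,1\}$ appropriately, forcing any proper congruence up to $0_{\sm b}$.

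The main obstacle I anticipate is \textbf{simplicity}: with only absorbing elements adjoined, congruences of $\m a$ survive into $\m b$, and killing them while keeping $v$ of the \emph{same arity} as $w$ is delicate. The standard fix is to make every pair of distinct elements of $A$ ``generate everything'' by exploiting the new elements — e.g. arrange that for $a \neq b$ in $A$ there is a unary polynomial (built from $v$ with constants among $\{0,1\}$ and elements of $A$) that maps $a,b$ to a pair which $\circ_v$-multiplies into the interval $\{0,1\}$ and then separates there, so $\Cg(a,b) = 1_{\sm b}$. One must check this can be done uniformly and that it does not accidentally introduce type $\mathbf 1$ or destroy the wnu identities; the wnu identities are safe since they never merge or create the special ``control'' arguments, and type $\mathbf 1$ is excluded because $v$ is a wnu operation on all of $B$, which by \cite{MM} forbids type $\mathbf 1$. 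The remaining verification that the unique cover is type $\mathbf 5$ rather than $\mathbf 3$, $\mathbf 4$, or $\mathbf 2$ then follows from the explicit semilattice behaviour of $v$ on $\{0,1\}$ together with the fact that the minimal set is forced to be two-element (any idempotent unary polynomial with image of size $\ge 2$ separating $0_{\sm b}$ from $1_{\sm b}$ can be post-composed into $\{0,1\}$ by $\circ_v$-multiplying with the constant $1$). I would present the construction, then prove in turn: (a) $v$ is idempotent and wnu and $\m a \le \m b$; (b) $\Cg(a,b) = 1_{\sm b}$ for every pair $a\neq b$, hence $\m b$ is simple; (c) the cover $0_{\sm b}\prec 1_{\sm b}$ has type $\mathbf 5$.
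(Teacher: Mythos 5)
You have correctly isolated the crux of the problem (simplicity), but your construction does not actually overcome it, and the ``standard fix'' you invoke is left unspecified at exactly the point where the real idea is needed. With your second attempt ($B=A\cup\{0,1\}$, all of $A$ sitting symmetrically above $1$, and $v$ returning $1$ on any non-unanimous evaluation that meets $1$ but not $0$), every unary polynomial of $\m b$ that uses a constant from $\{0,1\}$ collapses all of $A$ to a single value, and every unary polynomial whose constants all lie in $A$ restricts to a polynomial of $\m a$. Consequently, if $\alpha$ is any nontrivial congruence of $\m a$, then $\alpha\cup 0_{\sm b}$ is a congruence of $\m b$: one checks directly that $v$ is compatible with it in each of the three cases of your definition. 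So $\m b$ is simple only if $\m a$ already was, and no choice of unary polynomial can map a pair $(a,b)\in\alpha$ to a separated pair in $\{0,1\}$. The missing ingredient is that the new elements must act \emph{non-symmetrically} on $A$: the paper adjoins elements $0$, $s$, $a_{n+1}$ to an enumeration $A=\{a_1,\dots,a_n\}$ and decrees $s\circ_v a_i=a_{i+1}$, $s\circ_v a_{n+1}=0$, so that the unary polynomial $l_1(x)=s\circ_v x$ is a successor map pushing any two distinct $a_i,a_j$ ($i<j$) to the separated pair $(a_{n+1},0)$ after finitely many iterations; a second family of polynomials then shows $\Cg(a_{n+1},0)=1_{\sm b}$. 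This shift structure is what makes every nontrivial congruence of $\m b$ contain $(a_{n+1},0)$ and hence be all of $B^2$, and nothing in your proposal plays that role.

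Your remaining steps are essentially sound and match the paper: the wnu identities survive because they only permute arguments; $\m a\le\m b$ because $v|_{A^n}=w$; the minimal set is forced down to a two-element set by an idempotent iterate of a translation by a new element; and the type is $\mathbf 5$ rather than $\mathbf 3$ or $\mathbf 4$ because $0$ is absorbing for every polynomial in each variable on which it depends, so no pseudo-join can exist. But as written the proposal does not prove the proposition, because the simplicity argument is the theorem's content and it is precisely the part that is missing.
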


\begin{proof}
Let $A=\{a_1,\ldots,a_n\}$. We define $B=A\cup \{0,s,a_{n+1}\}$, and define a weak near-unanimity term $v$ on $B$ so that $v|_{A}=w$ and
\begin{itemize}
\item $v(x,x,\ldots,x)=x$.
\item $0$ is an absorbing element for $v$, that is, whenever there is a $0$ among the entries of $v$, the result is $0$.
\item For all evaluations of $v$ which are neither fully in $A$, nor nearly unanimous, the result is $0$.
\item For all $1\leq i\leq n$, $s\circ_v a_i=a_i\circ_v s=a_{i+1}$.
\item For all $1\leq i\leq n$, $a_{n+1}\circ_v a_i=a_{n+1}$ and $a_i\circ_{v} a_{n+1}=s$.
\item $s\circ_v a_{n+1}=0$ and $a_{n+1}\circ_v s=a_1$.
\end{itemize}

We will prove that $\m b$ is simple. We make the following two claims: $(1)$ any nontrivial congruence $\alpha$ on $\m b$ contains $(a_{n+1},0)$ and $(2)$ $\Cg^{\sm b}(a_{n+1},0)=1_{\sm b}$. Define the unary polynomials $l_1(x):=s\circ_v x$, $l_2(x):=a_{n+1}\circ_v x$, $r_1(x):=x\circ_v s$ and $r_2(x):=x\circ_v a_{n+1}$. 

To prove the first claim, assume that $(x,y)\in\alpha\in\cn b$ for some $x\neq y\in B$. If $(x,y)=(a_i,a_j)$ for some $i<j$, or $(a_i,0)$, then $(l_1^{n+1-i}(x),l_1^{n+1-i}(y))=(a_{n+1},0)$. If $(x,y)=(s,a_i)$, for some $1\leq i\leq n+1$,  or $(x,y)=(s,0)$, then $(l_1^{n}l_2(x),l_1^{n}l_2(y))=(a_{n+1},0)$.

To prove the second claim, let $\alpha=\Cg^{\sm b}(a_{n+1},0)$. Then we have $(a_j,0)=(l_1^{j-1}r_1(a_{n+1}),l_1^{j-1}r_1(0))\in\alpha$ for each $1\leq j\leq n$. Moreover, from $(a_1,0)\in \alpha$, we get $(s,0)=(r_2(a_1),r_2(0))\in\alpha$.

Finally, the subset $U=\{0,a_{n+1}\}$ is clearly equal to $l_2^2(B)$ for the idempotent unary polynomial $l_2^2(x)$. Therefore, it must be a $(0_{\sm a},1_{\sm a})$-minimal set. It is also a subuniverse of $\m b$ with $v$ playing the role of $n$-ary meet of variables with respect to order $0<a_{n+1}$. So the type of $(0_{\sm a},1_{\sm a})$ must be {\bf 3}, {\bf 4}, or {\bf 5}. However, the possibilities that it may be other than {\bf 5} are excluded since $0$ is absorbing for any polynomial $p$ of $\m b|_U$ in each variable on which $p$ actually depends, so there can be no polynomial $q\in\poln{2}{b}|_U$ which is a binary join.
\end{proof}

\section{Semilattices of Mal'cev blocks are a variety}

\begin{prp}\label{SMBquasi}
The class $\vr s$ of all SMB algebras is a quasivariety.
\end{prp}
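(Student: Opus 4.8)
The plan is to deduce the claim from the standard characterization of quasivarieties: a class of algebras is a quasivariety if and only if it is closed under isomorphisms, subalgebras, direct products and ultraproducts (see \cite{burris-sank}). A one-element algebra is obviously an SMB algebra (over its unique congruence), and $\vr s$ is trivially closed under isomorphisms, so what remains is to verify the three closure properties, which I would carry out in increasing order of difficulty.

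For subalgebras: if $\m a$ is SMB over $\sim$ and $\m b\le\m a$, take ${\sim}\cap B^2$ as the witnessing congruence of $\m b$. The map $b\mapsto[b]_{\sim}$ is a homomorphism $\m b\to\m a/{\sim}$ with kernel ${\sim}\cap B^2$, so the $\mt$-reduct of $\m b/({\sim}\cap B^2)$ embeds into the semilattice $(A/{\sim};\mt)$ and is therefore a semilattice. Every $({\sim}\cap B^2)$-class is of the form $D\cap B$ for a $\sim$-class $D$ of $\m a$; it is a subuniverse of $\m b$ since $\m b$ is idempotent, and since the identities defining ``second projection'' and ``Mal'cev operation'' are universally quantified, $\mt$ and $d$ restrict to $D\cap B$ as the second projection, resp.\ a Mal'cev operation. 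For direct products: if each $\m a_i$ is SMB over $\sim_i$, take $\sim:=\prod_i\sim_i$ on $\m a=\prod_i\m a_i$. Then $\m a/{\sim}\cong\prod_i(\m a_i/{\sim_i})$ has a semilattice $\mt$-reduct, each $\sim$-class is a product $\prod_iD_i$ of $\sim_i$-classes, and $\mt$ and $d$ act on it coordinatewise, hence as the second projection, resp.\ a Mal'cev operation. Both checks are routine.

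The real content is closure under ultraproducts, and the key observation is that ``being an SMB algebra'' becomes first-order once one may name the witnessing congruence. I would expand the language by a binary relation symbol $\rho$ and, for each SMB algebra $\m a_i$, choose a congruence $\sim_i$ satisfying Definition~\ref{SMBdef} and form the expanded structure $(\m a_i,\sim_i)$. The following sentences constitute a (finite) first-order theory $T$ in the expanded language: idempotence of $\m a$; that $\rho$ is an equivalence relation compatible with $\mt$ and with $d$; that $\mt$ is associative, commutative and idempotent modulo $\rho$ (i.e.\ $\rho((x\mt y)\mt z,\,x\mt(y\mt z))$, $\rho(x\mt y,\,y\mt x)$ and $\rho(x\mt x,x)$ hold for all $x,y,z$); and that $\rho(x,y)$ implies $x\mt y\approx y$, $d(x,x,y)\approx y$ and $d(x,y,y)\approx x$. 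One checks that $(\m a,\rho)\models T$ if and only if $\m a$ is an SMB algebra over $\rho$. Since the class of models of a first-order theory is closed under ultraproducts ({\L}o{\'s}'s theorem), $\prod_U(\m a_i,\sim_i)=(\prod_U\m a_i,\,\prod_U\sim_i)$ models $T$, so $\prod_U\m a_i$ is an SMB algebra over $\prod_U\sim_i$. This gives the last closure property and finishes the proof.

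The step I expect to require the most care is the equivalence ``$(\m a,\rho)\models T\iff\m a$ is an SMB algebra over $\rho$'', i.e.\ that the first-order translation of Definition~\ref{SMBdef} is faithful; the substantive direction is recovering a witnessing congruence as $\rho$. For this I would isolate a short lemma: in any SMB algebra over $\sim$ one has $a\sim b$ iff $a\mt b\approx b$ and $b\mt a\approx a$ --- forward because $\mt$ is the second projection on each $\sim$-class, and the converse because these two equalities force $[a]_{\sim}\le[b]_{\sim}\le[a]_{\sim}$ in the antisymmetric semilattice order on $\m a/{\sim}$. Hence the witnessing congruence is term-definable and unique. This is exactly what makes the ultraproduct argument go through; moreover, substituting this definition of $\sim$ into $T$ exhibits $\vr s$ as the model class of an explicit finite set of quasi-identities, the nontrivial ones being those that assert transitivity and $\mt$- and $d$-compatibility of the defined relation.
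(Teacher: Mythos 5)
Your proof is correct, but it reaches the conclusion by a different route than the paper. The paper works purely syntactically: it first establishes the characterization $a\sim b$ iff $a\mt b=b$ and $b\mt a=a$, and then uses it to transcribe Definition~\ref{SMBdef} directly into an explicit finite set of quasiidentities in the language $\{\mt,d\}$, so that $\vr s$ is a quasivariety by definition. You instead invoke Mal'cev's semantic characterization (closure under $\mathsf{I}$, $\mathsf{S}$, $\mathsf{P}$ and ultraproducts, plus a trivial algebra), verify the closure properties, and handle ultraproducts by expanding the language with a relation symbol for the witnessing congruence and applying \L{}o\'s's theorem; your closure checks are all sound, and note that for the ultraproduct step the faithfulness of the translation $T$ is essentially immediate from Definition~\ref{SMBdef}, so the definability lemma is not actually needed there --- its real role is the one you assign it in your last paragraph, namely eliminating the auxiliary symbol $\rho$ to land back in the original similarity type. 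Two remarks on what each approach buys. First, the statement being proved is only ``quasivariety,'' but the paper's Corollary~\ref{SMBFB} uses the stronger fact that $\vr s$ is axiomatized by a \emph{finite} set of quasiidentities; your main argument alone does not yield finiteness, whereas your closing observation (substituting the term-definition of $\sim$ into the finite theory $T$) does recover it and is in effect the paper's entire proof compressed into one sentence. Second, your route carries some avoidable overhead (ultraproducts, choice of witnessing congruences), while the paper's direct axiomatization is shorter and produces the artifact actually needed downstream; if you reorganize so that the definability of $\sim$ and the resulting explicit quasiidentities come first, the closure properties follow for free and the semantic machinery can be dropped.
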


\begin{proof}
Let $\m a$ be an SMB algebra over ${\sim}\in\cn a$ and let $a,b\in A$. If $a\sim b$, then $a\mt b=b$ and $b\mt a=a$, using Definition~\ref{SMBdef} $(ii)$. On the other hand, if $a\mt b=b$ and $b\mt a=a$, then 
\[
[a]_{\sim}=[b\mt a]_{\sim}=[b]_{\sim}\mt [a]_{\sim}=[a]_{\sim}\mt [b]_{\sim}=[a\mt b]_{\sim}=[b]_{\sim}.
\]
So, 
\begin{equation}\label{charalpha}
a\sim b\text{ iff }a\mt b=b\text{ and }b\mt a=a.
\end{equation}
Then it is easy to express that ${\sim}$ is a congruence by two quasiidentities per operation:
\[
\begin{gathered}
(x_1\mt x_2=x_2\;\&\; x_2\mt x_1=x_1\;\&\;y_1\mt y_2=y_2\;\&\; y_2\mt y_1=y_1) \Rightarrow\\ 
(x_1\mt y_1)\mt (x_2\mt y_2)=(x_2\mt y_2)\\
(x_1\mt x_2=x_2\;\&\; x_2\mt x_1=x_1\;\&\;y_1\mt y_2=y_2\;\&\;y_2\mt y_1=y_1) \Rightarrow \\
(x_2\mt y_2)\mt (x_1\mt y_1)=(x_1\mt y_1)
\end{gathered}
\]
and similarly, two quasiidentities express that ${\sim}$ is compatible with $d$.

The fact that $\m a/{\sim}$ is a meet semilattice can now be expressed as a finite set of identities: we are expressing the facts that $(x\mt x)\sim x$, $(x\mt y)\sim (y\mt x)$ and $((x\mt y)\mt z)\sim (x\mt (y\mt z))$ using two identities for each relation as in \eqref{charalpha}. Thus Definition~\ref{SMBdef} $(i)$ is expressed with (quasi)identities. Finally, for Definition~\ref{SMBdef} $(ii)$, we  need to express ``if $x\sim y$, then $x\mt y=y$'', ``if $x\sim y$ and $x\sim z$, then $x\sim d(x,y,z)$'', ``if $x\sim y$, then $d(y,x,x)=y$'' and ``if $x\sim y$, then $d(x,x,y)=y$''. All of these are clearly equivalent to quasiidentities using the characterization \eqref{charalpha} for $x\sim y$.
\end{proof}

\begin{lm}\label{leminjection}
Let $\m A=\langle A;\mt,d\rangle$ be an SMB algebra over ${\sim}\in \cn a$ and let $\theta\in \cn a$. If $(a,b)\in{\sim}\jn\theta$, then there exists some $e\in[a]_{{\sim}\jn\theta}$ such that $[e]_{\sim}\leq[a\mt b]_{\sim}$ and
\[\{[x]_\theta:x\in [a]_{\sim}\}\cup\{[x]_\theta:x\in [b]_{\sim}\}\subseteq\{[x]_\theta:x\in [e]_{\sim}\}.\]
\end{lm}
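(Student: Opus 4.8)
The plan is to produce $e$ by descending to the least $\sim$-class of the $({\sim\jn\theta})$-block of $a$, and then to check that this least class already meets every $\theta$-class that meets $[a]_{\sim}$ or $[b]_{\sim}$.

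First I would set $B=[a]_{\sim\jn\theta}$. Since $\m A$ is idempotent, $B$ is a subuniverse; since ${\sim}\usub{\sim\jn\theta}$, $B$ is a union of $\sim$-classes, so $B/{\sim}$ is a subset of $A/{\sim}$ closed under $\mt$, hence a finite sub-meet-semilattice of the semilattice $(A/{\sim};\mt)$. Therefore $B/{\sim}$ has a least element $m=\bigwedge(B/{\sim})$, and $m$ itself lies in $B/{\sim}$. I would then pick any $e$ with $[e]_{\sim}=m$. Conditions (i) and (ii) are then immediate: $e\in B=[a]_{\sim\jn\theta}$, and $a\mt b\in B$ forces $m\le[a\mt b]_{\sim}$, i.e.\ $[e]_{\sim}\le[a\mt b]_{\sim}$. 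So the entire content is the displayed inclusion, and by the symmetry between $a$ and $b$ it suffices to show that each $\theta$-class meeting $[a]_{\sim}$ also meets $m$.

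The key lemma I would isolate is a ``descent'' statement: if $D'\le D$ are $\sim$-classes and there are $d\in D$, $d'\in D'$ with $d\mathrel\theta d'$, then every $\theta$-class meeting $D$ meets $D'$; indeed, for $x\in D$ the element $d'\mt x$ lies in $D'\mt D=D'$ and, by compatibility of $\mt$ with $\theta$, satisfies $d'\mt x\mathrel\theta d\mt x=x$, the last equality because $\mt$ is the second projection on the $\sim$-class $D$ (Definition~\ref{SMBdef}(ii)). To reach $m$ from $[a]_{\sim}$ I would use connectivity: since $B$ is a single $({\sim\jn\theta})$-class and ${\sim\jn\theta}$ is the transitive closure of ${\sim}\cup\theta$, the graph on $B/{\sim}$ having an edge between $D$ and $D'$ whenever some $d\in D$, $d'\in D'$ satisfy $d\mathrel\theta d'$ is connected, so there is a path $[a]_{\sim}=D_0,D_1,\dots,D_k=m$. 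I would then replace this path by the running meets $E_i=D_0\mt\cdots\mt D_i$: these descend ($E_0\ge E_1\ge\cdots\ge E_k$), $E_0=[a]_{\sim}$, and $E_k=m$ since $m$ is the least class of $B/{\sim}$. For each $i$, fixing a witnessing edge $d\mathrel\theta d'$ with $d\in D_{i-1}$, $d'\in D_i$ and any $w\in E_{i-1}$, the elements $w\mt d$ (which lies in $E_{i-1}$, as $E_{i-1}\le D_{i-1}$) and $w\mt d'$ (which lies in $E_i$) are $\theta$-related, so $E_{i-1}$ and $E_i$ are themselves joined by such an edge. Applying the descent lemma to the pairs $E_i\le E_{i-1}$ down the chain yields that every $\theta$-class meeting $[a]_{\sim}=E_0$ meets $E_k=m$, and the identical argument with $b$ in place of $a$ finishes the proof.

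I expect the main obstacle to be precisely this last maneuver. One cannot simply induct on the length of a ${\sim}$-$\theta$ chain from $a$ to $b$, since an arbitrary $\sim$-class lying below $[a\mt b]_{\sim}$ need not see all the required $\theta$-classes, and the adjacency graph on $B/{\sim}$ will in general have no \emph{descending} path from $[a]_{\sim}$ to $m$. Converting an arbitrary adjacency path into the descending sequence of running meets, and verifying that adjacency is inherited by consecutive running meets (by meeting a fixed element of $E_{i-1}$ into a witnessing edge), is the crux; once that is in place, the rest is routine manipulation of the semilattice order on $A/{\sim}$ together with the fact that $\mt$ acts as a second projection inside each $\sim$-class.
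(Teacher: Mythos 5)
Your proof is correct under the paper's standing convention that algebras are finite, and its engine is the same as the paper's: the ``descent'' observation that a $\theta$-edge between a $\sim$-class $D$ and a class $D'\leq D$ transports every $\theta$-class meeting $D$ into $D'$ via $x\mapsto d'\mt x$ (using that $\mt$ is the second projection on $D$), combined with replacing an arbitrary connecting path by its running meets so that the transport always goes downward. The difference is organizational. The paper fixes one Mal'cev chain $a=c_0,d_0,c_1,d_1,\dots,c_n,d_n=b$ witnessing $(a,b)\in{\sim}\jn\theta$, takes $e=(\dots(c_0\mt c_1)\mt\dots)\mt c_n$, and argues by induction on $n$, manufacturing at each step a shorter chain of primed elements $c_i'=d_i\mt c_{i+1}$, $d_i'=c_i'\mt d_{i+1}$; your running meets $E_i$ are precisely the classes this recursion visits, so your one-pass formulation is arguably cleaner. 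What the paper's version buys is independence from finiteness: you place $e$ in the \emph{global} least element of $B/{\sim}$, which exists only because $B/{\sim}$ is a finite meet-subsemilattice, whereas the paper's $[e]_{\sim}$ is the meet over the finitely many classes occurring in a single witnessing chain and therefore makes sense for infinite SMB algebras too. This matters downstream: Lemma~\ref{leminjection} is applied in the proof of Theorem~\ref{SMBvar} to arbitrary members of the quasivariety $\vr s$, which contains infinite algebras, since $\mathsf{H}$-closure must be verified for all of them. The repair is one line: take $m:=D_0\mt\dots\mt D_k$ over the classes of a chosen path from $[a]_{\sim}$ to $[b]_{\sim}$ instead of the global minimum; commutativity and associativity of $\mt$ on $A/{\sim}$ make this meet symmetric in $a$ and $b$, which restores the symmetry step exactly as in the paper.
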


\begin{proof}
Let 
\[
a=c_0,d_0,c_1,d_1,\dots,c_n,d_n=b
\]
be such that 
\[
\text{for all }0\leq i\leq n, [c_i]_{\sim}=[d_i]_{\sim}
\] 
and 
\[
\text{for all }0\leq i<n, [d_i]_\theta=[c_{i+1}]_\theta.
\]
We think of $n$ as the ``distance'' between $a$ and $b$. We will prove, by an induction on $n$, the following 

{\bf Claim.} Let 
\[e:=(\dots((c_0\mt c_1)\mt c_2)\mt\dots )\mt c_n.\]
Then
\[\{[x]_\theta:x\in [a]_{\sim}\}\subseteq\{[x]_\theta:x\in [e]_{\sim}\}.\]

{\em Proof of the Claim}. We first prove the Claim when $n=1$. Let $e=c_0\mt c_1$. Clearly, 
\begin{equation}\label{eq1}
[e]_\theta=[c_0\mt c_1]_\theta=[c_0\mt d_0]_\theta=[d_0]_\theta.
\end{equation} 
For any $x\in [a]_{\sim}=[c_0]_{\sim}$, we have
\[
[e\mt x]_{\sim}=[(c_0\mt c_1)\mt x]_{\sim}=[c_0\mt c_1]_{\sim}\mt [c_0]_{\sim}=[c_0\mt c_1]_{\sim}=[e]_{\sim}
\]
and
\[
[e\mt x]_\theta\stackrel{\eqref{eq1}}{=}[d_0\mt x]_\theta=[x]_\theta.
\]
So, any $\theta$-class which intersects $[a]_{\sim}$ in $x$ also intersects $[e]_{\sim}$ in $e\mt x$ and the base case is proved.

Now suppose that the Claim is true for all $a$ and $b$ which are at the distance $n-1$ in the same ${\sim}\jn\theta$-class, and prove it for distance $n$. Define 
\[
\begin{gathered}
c_i':=d_i\mt c_{i+1}\text{ for all }1\leq i<n,\\
a'=c_0':=c_1\mt a\\
d_i':=c_i'\mt d_{i+1}\text{ for all }0\leq i<n,\\
b':=c_{n-1}'\mt b\:(=d_{n-1}',\text{ since }b=d_n).
\end{gathered}
\]
We see that 
\[
\begin{gathered}
\mbox{}[d_0']_{\sim}=[(c_1\mt a)\mt d_1]_{\sim}=[(c_1\mt a)\mt c_1]_{\sim}=[c_1\mt a]_{\sim}=[a']_{\sim}, \\
[d_i']_{\sim}=[(d_i\mt c_{i+1})\mt d_{i+1}]_{\sim}=[(d_i\mt c_{i+1})\mt c_{i+1}]_{\sim} = [d_i\mt c_{i+1}]_{\sim}=[c_i']_{\sim},\\
[d_i']_\theta=[(d_i\mt c_{i+1})\mt d_{i+1}]_\theta=[c_{i+1}\mt d_{i+1}]_\theta = [d_{i+1}]_\theta=[c_{i+2}]_\theta,\\
[c_{i+1}']_\theta=[d_{i+1}\mt c_{i+2}]_\theta=[c_{i+2}]_\theta = [d_i']_\theta.
\end{gathered}
\]
Also note that all $c_i'$ and all $d_j'$ are in the same ${\sim}\jn\theta$-class as $a$ and $b$ (since all $c_i$ and $d_j$ are in that class, which is closed under $\mt$ by idempotence). We conclude that the inductive assumption can be applied to $a'$ and $b'$. Thus 
\[\{[x]_\theta:x\in [a']_{\sim}\}\subseteq\{[x]_\theta:x\in [e']_{\sim}\},\]
where $e'=(\dots((c_0'\mt c_1')\mt c_2')\mt\dots )\mt c_{n-1}'$. As 
\[
\begin{gathered}
\mbox{}[e']_{\sim}=[c_0']_{\sim}\mt\dots\mt[c_{n-1}']_{\sim}=\\
[c_1]_{\sim}\mt[a]_{\sim}\mt[c_1]_{\sim}\mt[c_2]_{\sim}\mt[c_2]_{\sim}\mt[c_3]_{\sim}\mt\dots\mt[c_{n-1}]_{\sim}\mt[c_n]_{\sim}=\\
[c_0]_{\sim}\mt[c_1]_{\sim}\mt\dots\mt[c_n]_{\sim}=[e]_{\sim},
\end{gathered}
\]
we obtain that 
\begin{equation}\label{eq2}
\{[x]_\theta:x\in [a']_{\sim}\}\subseteq\{[x]_\theta:x\in [e]_{\sim}\}.
\end{equation}
Now note that $[a']_{\sim}=[a\mt c_1]_{\sim}=[c_0\mt c_1]_{\sim}$. From the proof of the base case of the Claim we obtain
\begin{equation}\label{eq3}
\{[x]_\theta:x\in [a]_{\sim}\}\subseteq\{[x]_\theta:x\in [a']_{\sim}\}.
\end{equation}
Putting \eqref{eq2} and \eqref{eq3} together, we get
\[\{[x]_\theta:x\in [a]_{\sim}\}\subseteq\{[x]_\theta:x\in [e]_{\sim}\},\]
completing the inductive proof of the Claim. 

Now notice that the statement of the Claim is symmetric: by flipping $a$ and $b$ and also looking at the chain of $c_i$s and $d_i$s ``backwards'', we would get 
\[\{[x]_\theta:x\in [b]_{\sim}\}\subseteq\{[x]_\theta:x\in [e']_{\sim}\},\]
where 
\[
e'=(\dots((d_n\mt d_{n-1})\mt d_{n-2})\mt\dots )\mt d_0.
\]
However, since $(A/{\sim};\mt)$ is a semilattice and $(c_i,d_i)\in{\sim}$ for all $0\leq i\leq n$, we get $[e]_{\sim}=[e']_{\sim}$. Therefore, by proving the Claim for $[a]_{\sim}$ and some distance $n$, we have also proved the Claim for $[b]_{\sim}$ and the same distance $n$, using the same $[e]_{\sim}$, and thus Lemma~\ref{leminjection} follows from the Claim.
\end{proof}

\begin{thm}\label{SMBvar}
The class $\vr s$ of all SMB algebras is a variety.
\end{thm}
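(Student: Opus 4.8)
The plan is to invoke Birkhoff's HSP theorem. Proposition~\ref{SMBquasi} already shows that $\vr s$ is a quasivariety, so in particular it is closed under subalgebras and products; it therefore suffices to prove that $\vr s$ is closed under homomorphic images. Concretely, given an SMB algebra $\m a=\lb A;\mt,d\rb$ over ${\sim}\in\cn a$ and an arbitrary $\theta\in\cn a$, we must exhibit a congruence of $\m a/\theta$ witnessing that $\m a/\theta$ is SMB. The candidate is $\mu/\theta$, where $\mu:={\sim}\jn\theta$, so the claim to be proved is that $\m a/\theta$ is an SMB algebra over $\mu/\theta$.

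Condition $(i)$ of Definition~\ref{SMBdef} is immediate: $(\m a/\theta)/(\mu/\theta)\cong\m a/\mu$, and since ${\sim}\usub\mu$ this is a quotient of the semilattice $(\m a/{\sim};\mt)$, hence itself a semilattice under $\mt$. For condition $(ii)$, fix a $\mu/\theta$-class; it has the form $\{[x]_\theta:x\in[a]_\mu\}$ for some $a\in A$. Take any $x,y\in[a]_\mu$, i.e. $(x,y)\in\mu={\sim}\jn\theta$, and apply Lemma~\ref{leminjection} to the pair $(x,y)$: it yields an element $e$ with
\[\{[z]_\theta:z\in[x]_{\sim}\}\cup\{[z]_\theta:z\in[y]_{\sim}\}\usub\{[z]_\theta:z\in[e]_{\sim}\}.\]
In particular $[x]_\theta$ and $[y]_\theta$ each meet $[e]_{\sim}$, so we may choose $x'\in[e]_{\sim}$ with $(x',x)\in\theta$ and $y'\in[e]_{\sim}$ with $(y',y)\in\theta$. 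Now $x'$ and $y'$ lie in the single ${\sim}$-class $[e]_{\sim}$, on which $\mt$ is the second projection and $d$ is a Mal'cev operation; hence in $\m a$ we have $x'\mt y'=y'$, $d(x',y',y')=x'$ and $d(x',x',y')=y'$. Applying $\theta$ to these equalities and using $(x',x),(y',y)\in\theta$, we obtain $(x\mt y,y)\in\theta$, $(d(x,y,y),x)\in\theta$ and $(d(x,x,y),y)\in\theta$; that is, in $\m a/\theta$ the operation $\mt$ restricted to the chosen $\mu/\theta$-class is the second projection and $d$ restricted to it is a Mal'cev operation. Since $x,y$ were arbitrary in the block, $(ii)$ holds, so $\m a/\theta\in\vr s$. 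Being closed under homomorphic images as well as under subalgebras and products, $\vr s$ is a variety.

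I expect essentially all the work to be concentrated in Lemma~\ref{leminjection}, which is already available; granting that lemma, the argument above is just a short reduction of the block structure on a $({\sim}\jn\theta)/\theta$-class to the (given) block structure on a single ${\sim}$-class. The only points deserving a moment's care are, first, the choice of $\mu/\theta$ as the witnessing congruence — but this is essentially forced, since $(i)$ requires the quotient by the witness to be a semilattice and ${\sim}\jn\theta$ is the least congruence above $\theta$ that collapses ${\sim}$ — and second, that we invoke only the weakest consequence of Lemma~\ref{leminjection}, namely that $[x]_\theta$ and $[y]_\theta$ are among the $\theta$-classes meeting $[e]_{\sim}$, so the full strength of the lemma (including the bound $[e]_{\sim}\le[a\mt b]_{\sim}$) is not needed for this theorem.
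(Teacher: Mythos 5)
Your proposal is correct and follows essentially the same route as the paper's proof: establish closure under $\mathsf H$ on top of the quasivariety result, take $({\sim}\jn\theta)/\theta$ as the witnessing congruence, get condition $(i)$ from the isomorphism theorems, and get condition $(ii)$ by using Lemma~\ref{leminjection} to replace an arbitrary pair in a $({\sim}\jn\theta)$-class by a $\theta$-equivalent pair lying in a single ${\sim}$-class. Your closing observations (that the choice of witness is essentially forced and that the bound $[e]_{\sim}\le[a\mt b]_{\sim}$ is not needed here) are accurate.
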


\begin{proof}
According to Proposition~\ref{SMBquasi}, $\vr s$ is a quasivariety and hence closed under the class operators $\mathsf S$ and $\mathsf{P}$. It remains to prove that $\mathsf{H}(\vr s)\subseteq \vr s$.

Let $\m a$ be an SMB algebra over ${\sim}\in \cn a$. Let $\m b\in \mathsf{H}(\m a)$. We know that $\m b\cong \m a/\theta$, where $\theta\in\cn a$ is the kernel of the surjective homomorphism from $\m a$ to $\m b$. It suffices to prove that $\m a/\theta$ is an SMB algebra, as the notion is invariant under isomorphism.

From Isomorphism Theorems, we know that $({\sim}\jn\theta)/\theta\in\cn a/\theta$. We prove that $\m a/\theta$ is an SMB algebra over $({\sim}\jn\theta)/\theta$. We have that 
\[
\begin{gathered}((A;\mt)/\theta)/(({\sim}\jn\theta)/\theta)\cong(A;\mt)/({\sim}\jn\theta)\cong\\
((A;\mt)/{\sim})/(({\sim}\jn\theta)/{\sim})\in\mathsf{H}((A;\mt)/{\sim}).
\end{gathered}
\]
Therefore, $((A;\mt)/\theta)/(({\sim}\jn\theta)/\theta)$ is a semilattice.

Now assume that $([a]_\theta,[b]_\theta)\in({\sim}\jn\theta)/\theta$. Hence $(a,b)\in {\sim}\jn\theta$. According to Lemma~\ref{leminjection}, there exists $e\in[a]_{{\sim}\jn\theta}$ and $a',b'\in [e]_{\sim}$ such that $[a]_\theta=[a']_\theta$ and $[b']_\theta=[b]_\theta$. As $\m a$ is an SMB algebra over ${\sim}$, we know that $a'\mt b'=b'$ and $d(b',b',a')=a'=d(a',b',b')$. Using $(a,a')\in\theta$ and $(b,b')\in\theta$, we obtain
\[
[a]_\theta\mt [b]_\theta=[b]_\theta\text{ and }d([b]_\theta,[b]_\theta,[a]_\theta)=[a]_\theta=d([a]_\theta,[b]_\theta,[b]_\theta).
\]
Since $[a]_\theta$ and $[b]_\theta$ were chosen arbitrarily in the same $({\sim}\jn\theta)/\theta$-class, this means that $\mt$ acts as the second projection on each $({\sim}\jn\theta)/\theta$-class, while $d$ acts as a Mal'cev operation on each $({\sim}\jn\theta)/\theta$-class. (Of course, each $({\sim}\jn\theta)/\theta$-class is closed under $d$ because of idempotence.) We proved that $\m a/\theta$ is an SMB algebra over $({\sim}\jn\theta)/\theta$, as desired.
\end{proof}

\begin{cor}\label{SMBFB}
The variety $\vr s$ of SMB algebras is finitely based.
\end{cor}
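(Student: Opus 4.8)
The plan is to deduce finite axiomatizability by equations from the combination of Theorem~\ref{SMBvar} (that $\vr s$ is a variety) and the proof of Proposition~\ref{SMBquasi} (that $\vr s$ is cut out by finitely many quasiidentities), via the compactness theorem. The guiding principle is the standard fact that a variety which happens to be finitely axiomatizable as an elementary class is automatically finitely based.

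First I would make explicit what the proof of Proposition~\ref{SMBquasi} actually produces: a \emph{finite} set $\Sigma$ of first-order sentences in the signature $\{\mt,d\}$ --- namely the idempotence identities $x\mt x\approx x$ and $d(x,x,x)\approx x$, the two quasiidentities per fundamental operation asserting that the equationally-defined relation of \eqref{charalpha} is compatible with that operation, the quasiidentity asserting its transitivity, the finitely many identities encoding the semilattice laws modulo that relation, and the four quasiidentities encoding Definition~\ref{SMBdef}$(ii)$ --- such that $\vr s=\mathrm{Mod}(\Sigma)$. In particular $\vr s$ is an elementary class.

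Now by Theorem~\ref{SMBvar} and Birkhoff's theorem, $\vr s=\mathrm{Mod}(E)$ where $E$ is the set of \emph{all} identities valid in $\vr s$. For each $\sigma\in\Sigma$ we have $\mathrm{Mod}(E)=\vr s\subseteq\mathrm{Mod}(\sigma)$, i.e.\ $E\models\sigma$, so by the compactness theorem there is a finite $E_\sigma\subseteq E$ with $E_\sigma\models\sigma$. Then $E_0:=\bigcup_{\sigma\in\Sigma}E_\sigma$ is a finite set of identities, and $\mathrm{Mod}(E_0)\subseteq\mathrm{Mod}(\Sigma)=\vr s=\mathrm{Mod}(E)\subseteq\mathrm{Mod}(E_0)$, so $\vr s=\mathrm{Mod}(E_0)$ is finitely based.

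Beyond Theorem~\ref{SMBvar} there is essentially no obstacle; the only point to be careful about is to verify that the axiomatization extracted from Proposition~\ref{SMBquasi} is genuinely finite --- which it is, since only finitely many identities and quasiidentities are used --- so that compactness applies. If one wanted an explicit finite equational base rather than an existence proof, one could instead try to replace each quasiidentity by an equivalent identity, exploiting that $\sim$ is term-definable through \eqref{charalpha}; but the compactness argument already gives the statement.
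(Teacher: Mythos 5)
Your proof is correct and is essentially the paper's own argument: the paper likewise combines Proposition~\ref{SMBquasi} and Theorem~\ref{SMBvar} and invokes the Compactness Theorem, merely leaving implicit the standard details you spell out. Nothing to add.
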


\begin{proof}
By Proposition~\ref{SMBquasi} and Theorem~\ref{SMBvar}, $\vr s$ is a variety which is finitely axiomatizable by quasiidentities. Then by Compactness Theorem $\vr s$ is finitely based. 
\end{proof}

Note that we don't actually know a finite equational basis for $\vr s$, we just know that one exists. In the next section we will define a subvariety of $\vr s$ for which we know a finite equational base.

\begin{prp}\label{SMBTaylor}
The variety $\vr s$ of SMB algebras has a Taylor term.
\end{prp}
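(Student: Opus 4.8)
The plan is to avoid writing down an explicit Taylor term and instead invoke the standard criterion (a form of Taylor's theorem; see also \cite{MM}) that an \emph{idempotent} variety has a Taylor term if and only if it has no nontrivial member all of whose basic operations are projections (equivalently, no member of cardinality $>1$ that is term-equivalent to a set). Note that one cannot instead use the weak near-unanimity characterization of \cite{MM}, since $\vr s$ is not locally finite --- its ${\sim}$-classes range over arbitrarily large Mal'cev algebras. With the criterion in hand, Proposition~\ref{SMBTaylor} reduces to the observation that an SMB algebra in which both fundamental operations are projections must be trivial.

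To see this, I would suppose $\m p\in\vr s$ has more than one element while $\mt^{\sm p}$ and $d^{\sm p}$ are projections, and fix a congruence ${\sim}\in\cn p$ witnessing that $\m p$ is an SMB algebra over ${\sim}$. If $|P/{\sim}|>1$, then $(P/{\sim};\mt)$ is a semilattice with at least two elements; but $\mt^{\sm p/{\sim}}$ is the image of the projection $\mt^{\sm p}$ under the quotient map, hence is itself one of the two binary projections, and a binary projection on a set of size $>1$ is not commutative --- contradicting the semilattice laws. In the remaining case ${\sim}=1_{\sm p}$, the whole of $P$ is a single ${\sim}$-class, so by Definition~\ref{SMBdef}~$(ii)$ the operation $d^{\sm p}$ acts as a Mal'cev operation on $P$; but $d^{\sm p}$ is one of the three ternary projections, and no ternary projection satisfies both $d(x,y,y)\approx x$ and $d(x,x,y)\approx y$ on a set of size $>1$. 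Either way $\m p$ is trivial, so no such $\m p$ exists and $\vr s$ has a Taylor term.

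The only real work is locating and citing the abstract criterion; the rest is the two-line check above, so I expect this to be the cleanest route. The alternative --- exhibiting an explicit Taylor (or Siggers-type) term built from $\mt$ and $d$ --- is tempting, since on each ${\sim}$-class the Mal'cev operation $d$ already yields, e.g., the Siggers term $d(x_3,x_1,x_4)$, while $\mt$ is a weak near-unanimity operation modulo ${\sim}$; but the hard part there would be that $\mt$ need not be associative or commutative away from the ${\sim}$-classes, so verifying that the candidate identities genuinely hold throughout $\vr s$ (and not merely on each class and modulo ${\sim}$ separately) is delicate. That is precisely the obstacle the abstract argument sidesteps.
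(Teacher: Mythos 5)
Your proof is correct, but it takes a genuinely different route from the paper. The paper simply exhibits an explicit Taylor term,
\[t(x_1,\dots,x_6):=d(x_1\mt x_2,\,x_3\mt x_4,\,x_5\mt x_6),\]
and checks $t(x,y,x,y,x,y)\approx t(y,x,y,x,x,y)\approx t(x,y,y,x,y,x)\approx x\mt y$ in two lines: all three arguments of $d$ land in the single ${\sim}$-class $[x\mt y]_{\sim}=[y\mt x]_{\sim}$, where $d$ is Mal'cev, so the failure of associativity and commutativity of $\mt$ away from ${\sim}$-classes --- the obstacle you flag at the end as ``delicate'' --- never actually enters the verification. Your route instead invokes Taylor's abstract criterion (an idempotent variety has a Taylor term iff it has no member of size $>1$ all of whose operations are projections) and reduces the proposition to a two-case check, which is sound: the criterion is applicable because Theorem~\ref{SMBvar} has already established that $\vr s$ is an (idempotent) variety, your case split between $|P/{\sim}|>1$ and ${\sim}=1_{\sm p}$ is exhaustive, and each case is correctly dispatched (a binary projection on a quotient of size $>1$ violates commutativity of the semilattice; no ternary projection is Mal'cev on a set of size $>1$). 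What the paper's argument buys is a concrete uniform term of known arity, which is what one wants for the CSP applications this series is aimed at; what yours buys is brevity at the price of a black box. One caution on the citation: since, as you correctly observe, $\vr s$ is not locally finite, the relevant criterion is Taylor's original theorem for arbitrary idempotent varieties, not anything in \cite{MM} (which lives in the locally finite, weak near-unanimity setting); you would need to cite Taylor (1977) --- not currently in the paper's bibliography --- for the statement you rely on.
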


\begin{proof}
We define the term $t$ in the following way:
\[t(x_1, x_2,x_3,x_4,x_5,x_6):=d(x_1\mt x_2, x_3\mt x_4, x_5\mt x_6).\]
Note that $[x\mt y]_{\sim}=[y\mt x]_{\sim}$. From Definition~\ref{SMBdef} $(ii)$ and idempotence we obtain
\begin{align*}
t(x,y,x,y,x,y)&= d(x\mt y, x\mt y, x\mt y)\approx x\mt y\\
t(y,x,y,x,x,y)&= d(y\mt x, y\mt x, x\mt y) \approx x\mt y\\
t(x,y,y,x,y,x)&= d(x\mt y, y\mt x, y\mt x)\approx x\mt y.
\end{align*}
Hence, $t$ is a Taylor term in $\vr s$.
\end{proof}

\section{Regular SMB algebras}

\begin{df}\label{regSMBdef}
We say that an SMB algebra $\m a=\lb A;\mt,d\rb$ over ${\sim}\in\cn a$ is {\em regular} if 
\begin{enumerate}
\item for all $a,b,c\in A$, $[d(a,b,c)]_{\sim}=[(a\mt b)\mt c]_{\sim}$,
\item for all $a,b\in A$ such that $[a]_{\sim}\geq[b]_{\sim}$, $a\mt b=b$,
\item $\m a\models d(x,y,z)\approx d((y\mt z)\mt x,(x\mt z)\mt y,(x\mt y)\mt z)$, and
\item $\m a\models(x\mt y)\mt y\approx x\mt y$.
\end{enumerate}
\end{df}

\begin{lm}\label{regSMBmeetbyd}
Any regular SMB algebra satisfies also the following identity:
\[\m a\models x\mt y\approx d(x,x,y)\approx d(y,x,x).\]
Therefore, in regular SMB algebras, the clone of all terms is generated by $d$.
\end{lm}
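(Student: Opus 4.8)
The plan is to collapse the whole identity into a single $\sim$-class, where, by Definition~\ref{SMBdef}, $d$ restricts to a Mal'cev operation and $\mt$ to the second projection, and then let the Mal'cev law finish the job. Fix $x,y\in A$ and put $D:=[x\mt y]_{\sim}$. Since $(A/{\sim};\mt)$ is a semilattice, idempotence, commutativity and associativity of $\mt$ in $\m a/{\sim}$ give
\[
[(x\mt y)\mt x]_{\sim}=[x\mt y]_{\sim}=[(y\mt x)\mt x]_{\sim}=D .
\]
The engine of the argument is identity $(iii)$ of a regular SMB algebra, which rewrites $d(x,x,y)$ and $d(y,x,x)$ as values of $d$ on triples all of whose entries lie in $D$; and since every $\sim$-class is closed under $d$ (idempotence) and $d{\upharpoonright_D}$ is Mal'cev, we have $d(p,p,q)=q=d(q,s,s)$ for all $p,q,s\in D$.

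Carrying this out: substituting $(x,x,y)$ for $(x,y,z)$ in $(iii)$ and simplifying the last coordinate by idempotence of $\mt$ gives
\[
d(x,x,y)\approx d\bigl((x\mt y)\mt x,\ (x\mt y)\mt x,\ x\mt y\bigr),
\]
whose three arguments all lie in $D$ by the computation above, so the Mal'cev law forces $d(x,x,y)\approx x\mt y$. Symmetrically, substituting $(y,x,x)$ for $(x,y,z)$ in $(iii)$ and simplifying the first coordinate by idempotence gives
\[
d(y,x,x)\approx d\bigl(x\mt y,\ (y\mt x)\mt x,\ (y\mt x)\mt x\bigr),
\]
again a value of $d$ on a triple from $D$, so $d(y,x,x)\approx x\mt y$. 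This proves the displayed identity. For the final sentence, the identity $x\mt y\approx d(x,x,y)$ exhibits $\mt$ as a term operation of $\lb A;d\rb$, and since $\mt$ and $d$ are the only basic operations of $\m a$, it follows that $\clo a$ is generated by $d$ alone.

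The only place where care is needed is the bookkeeping in the two substitutions into $(iii)$: one must check that each entry produced — namely $(x\mt y)\mt x$, $(y\mt x)\mt x$ and $x\mt y$ — really collapses into the single class $D=[x\mt y]_{\sim}$, which is exactly where the semilattice laws in $\m a/{\sim}$ are used; everything else is immediate from the definition of an SMB algebra. (In fact conditions $(i)$, $(ii)$ and $(iv)$ of Definition~\ref{regSMBdef} are not needed for this particular lemma.)
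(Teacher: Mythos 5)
Your proof is correct and follows essentially the same route as the paper's: substitute $(x,x,y)$ and $(y,x,x)$ into identity $(iii)$, simplify by idempotence, note that all resulting arguments lie in the single class $[x\mt y]_{\sim}$, and invoke the Mal'cev law there. The only (harmless) difference is that the paper additionally applies $(iv)$ to rewrite $(y\mt x)\mt x$ as $y\mt x$ before using the Mal'cev law, whereas you correctly observe this step is unnecessary since same-class membership suffices.
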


\begin{proof}
Using Definition~\ref{regSMBdef} $(iii)$ and $(iv)$, idempotence and the fact that $d$ is Mal'cev on $\sim$-classes, we compute
\[
\begin{gathered}
d(x,x,y)\approx d((x\mt y)\mt x,(x\mt y)\mt x,(x\mt x)\mt y)\approx\\
d((x\mt y)\mt x,(x\mt y)\mt x, x\mt y)\approx x\mt y\text{ and}\\
d(y,x,x)\approx d((x\mt x)\mt y,(y\mt x)\mt x,(y\mt x)\mt x)\approx\\
d(x\mt y,y\mt x,y\mt x)\approx x\mt y.
\end{gathered}
\]
\end{proof}

\begin{prp}\label{SMBtoreg}
Let $\m a$ be an SMB algebra, not necessarily finite. Then there are terms $d'(x,y,z)$ and $x\mt'y$ of $\m a$ so that the algebra $\m a'=\lb A;\mt',d'\rb$ is an SMB algebra which satisfies identities $(i)-(iii)$ in the definition of a regular SMB algebra. Moreover, the congruence ${\sim}$ remains unchanged and whenever $a,b,c$ are in the same ${\sim}$-class, then $d'(a,b,c)=d(a,b,c)$. 
\end{prp}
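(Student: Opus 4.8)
The plan is to build the new operations in stages, one defect of ``regularity'' at a time, starting from the given SMB term $d$ and the derived semilattice term $\mt$. First I would fix the semilattice issue $(iv)$ of Definition~\ref{SMBdef}, respectively the idempotency-like identity $(iv)$ of regularity, by replacing $\mt$ with a ``special'' version: set $x\mt_1 y:=(y\mt x)\mt y$, exactly as in the last step of the proof of Theorem~\ref{SMB-justify}. The computation there (carried out inside the module on each $\sim$-class, using Abelianness) shows $x\mt_1 y$ is the second projection on each $\sim$-class, while on $\m a/{\sim}$ it still equals $\mt$ by semilattice laws; one then checks $(x\mt_1 y)\mt_1 y\approx x\mt_1 y$ the same way. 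This gives Definition~\ref{regSMBdef}$(iv)$; note that on $\m a/{\sim}$ nothing about the semilattice order has changed, so the relation $\leq$ on $\sim$-classes is the same.

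Next I would fix $(ii)$: we need $[a]_{\sim}\geq[b]_{\sim}$ to imply $a\mt b=b$, i.e. when the left factor lies above the right factor in the semilattice order, the meet should be genuinely the second projection on the nose, not merely modulo $\sim$. The natural move is to iterate: replace $\mt_1$ by $x\mt_2 y:=$ some bounded iteration of $\mt_1$ in the first coordinate, mimicking the passage from $w$ to $w_{|A|}$ in Theorem~\ref{iterate} and Corollary~\ref{chain}; since the result must hold for possibly infinite $\m a$, the iteration length cannot depend on $|A|$, so instead I would argue directly: given $[a]_{\sim}\geq[b]_{\sim}$, we have $a\mt_1 b\sim b$ (semilattice order), and since $d$ is Mal'cev on the $\sim$-class $[b]_{\sim}$ we can correct, e.g. take $x\mt_2 y:=d(x\mt_1 y,\,y\mt_1 y,\,y)$ or a similar combination, and verify using $d(b,b,b)=b$ and $d$-Mal'cev on blocks that $a\mt_2 b=b$ whenever $a\sim b$ or $a>b$, while on incomparable or strictly-below left factors it still tracks $\mt$ modulo $\sim$, and on $\m a/{\sim}$ it equals $\mt$. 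One has to re-check that $(iv)$ survives this replacement, adjusting the formula if needed.

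Finally, with a meet $\mt$ in hand satisfying $(ii)$ and $(iv)$, I would define $d'(x,y,z):=d((y\mt z)\mt x,\,(x\mt z)\mt y,\,(x\mt y)\mt z)$; then $(iii)$ is built in by construction, since applying the same substitution again and using $(iv)$ plus idempotence collapses the inner terms back (this is the content of the computation already done in Lemma~\ref{regSMBmeetbyd}, run in reverse). For $(i)$, note $[(y\mt z)\mt x]_{\sim}=[(x\mt y)\mt z]_{\sim}=[(a\mt b)\mt c]_{\sim}$ for all three arguments in $\m a/{\sim}$ by semilattice commutativity and associativity, so all three arguments of the outer $d$ lie in the single $\sim$-class $[(x\mt y)\mt z]_{\sim}$, hence $d'(x,y,z)$ does too, giving $[d'(a,b,c)]_{\sim}=[(a\mt b)\mt c]_{\sim}$. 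Also, when $a,b,c$ all lie in one $\sim$-class $D$, every inner term $(y\mt z)\mt x$ etc.\ equals its last factor ($x$, $y$, $z$ respectively) because $\mt$ is the second projection on $D$, so $d'(a,b,c)=d(a,b,c)$, as required; and $\sim$ is unchanged throughout since at each stage the new meet induces the same ``$x\sim y$ iff $x\mt y=y$ and $y\mt x=x$'' characterization~\eqref{charalpha}. One must also confirm that $\m a'=\lb A;\mt,d'\rb$ is still an SMB algebra over $\sim$: clause~(i) of Definition~\ref{SMBdef} holds since $\mt$ still induces a semilattice on $\m a/{\sim}$, and clause~(ii) holds since $\mt$ is the second projection on blocks and $d'$ restricted to a block $D$ equals $d|_D$, which is Mal'cev.

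The main obstacle is the middle step: securing identity $(ii)$ on the nose (not just modulo $\sim$) in the \emph{infinite} setting, where the finite-iteration tricks of Section~3 are unavailable, while simultaneously preserving $(iv)$. I expect this to require a carefully chosen single term built from $d$ and the special meet rather than an iteration, with the verification splitting into the cases $a\sim b$, $a>b$, $a<b$, and $a,b$ incomparable, and using Abelianness (the module structure on blocks) in the first two cases.
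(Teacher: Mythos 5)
There is a genuine gap, and it comes from misreading both the target and the difficulty. The proposition only asks for identities $(i)$--$(iii)$ of Definition~\ref{regSMBdef}; identity $(iv)$ is deliberately excluded because it genuinely requires finiteness (it is obtained in Proposition~\ref{SMBtospec} by iterating $\mt$ exactly $|A|!$ times). Your first stage claims that $x\mt_1 y:=(y\mt x)\mt y$ satisfies $(x\mt_1 y)\mt_1 y\approx x\mt_1 y$ ``the same way''; this is false in general, since $(x\mt_1 y)\mt_1 y=(y\mt(x\mt_1 y))\mt y$ and $y\mt(x\mt_1 y)$ is an application of $\mt$ to arguments in \emph{different} $\sim$-classes, where nothing forces it to land back on $x\mt_1 y$. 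You also appeal to Abelianness and a module structure on the blocks; a general SMB algebra in the sense of Definition~\ref{SMBdef} carries no such structure --- all you know on a block is that $\mt$ is the second projection and $d$ is Mal'cev there.

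Conversely, you treat clause $(ii)$ as the main obstacle, but it is immediate from one extra application of $\mt$: if $[a]_{\sim}\geq[b]_{\sim}$ then $[a\mt b]_{\sim}=[b]_{\sim}$, so $a\mt b$ and $b$ lie in a single $\sim$-class and $(a\mt b)\mt b=b$ because $\mt$ is the second projection on that class. Thus $x\mt'y:=(x\mt y)\mt y$ (the paper's choice; your own $(y\mt x)\mt y$ would also give $(ii)$) already does the job, with no iteration and no Mal'cev correction, in arbitrary cardinality. The correction term you do propose, $d(x\mt_1 y,\,y\mt_1 y,\,y)=d(x\mt_1 y,\,y,\,y)$, fails: when $[x]_{\sim}\geq[y]_{\sim}$ all three arguments lie in $[y]_{\sim}$ and the Mal'cev law returns $x\mt_1 y$, not $y$, while for incomparable classes $d$ applied across blocks is completely unconstrained. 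Your second half --- defining $d'(x,y,z):=d((y\mt'z)\mt'x,(x\mt'z)\mt'y,(x\mt'y)\mt'z)$ and checking $(i)$, $(iii)$, the block restriction $d'|_D=d|_D$, and the invariance of $\sim$ --- is correct and coincides with the paper's argument.
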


\begin{proof}
\item We define $\mt'$ in the following way:
\[
x\mt' y :=(x\mt y)\mt y.
\]
Now, whenever $[a]_{\sim} \geq [b]_{\sim}$, we have $[a\mt b]_{\sim}=[b]_{\sim}$, implying that
\begin{equation}\label{eq4}
a\mt' b= (a\mt b)\mt b=b.
\end{equation}
Hence, Definition~\ref{regSMBdef} $(ii)$ holds. Also, for all $a,b\in A$, we have 
\[[a\mt' b]_{\sim}=[a\mt b]_{\sim}\mt[b]_{\sim}=[a\mt b]_{\sim}.\]
Hence, modulo ${\sim}$, $\mt$ and $\mt'$ are the same semilattice operation, and from \eqref{eq4} it follows that $\mt'$ acts as the second projection on each ${\sim}$-class.

Next, we define $d'$ in the following way:
\[d'(x,y,z):=d((y\mt' z)\mt' x, (x\mt' z)\mt' y, (x\mt' y)\mt' z).\]
If $a$, $b$ and $c$ are in the same ${\sim}$-class, we have \[
\begin{gathered}
(b\mt' c)\mt' a=a,\\
(a\mt' c)\mt' b=b,\text{ and}\\
(a\mt' b)\mt' c=c,
\end{gathered}
\]
and hence $d'(a,b,c)=d(a,b,c)$ whenever $a$, $b$ and $c$ are in the same $\sim$-class. Moreover, this implies that $d'$ is a Mal'cev operation on ${\sim}$-classes. Together with the above observations about $\mt'$, we have that $\lb A;\mt',d'\rb$ is an SMB algebra which satisfies $(iii)$ of Definition~\ref{regSMBdef}. 

Finally, for all $a,b,c\in A$,
\[[(b\mt' c)\mt' a]_{\sim}=[(a\mt' c)\mt' b]_{\sim}=[(a\mt' b)\mt' c)]_{\sim}=[a]_{\sim}\mt[b]_{\sim}\mt[c]_{\sim},\]
so by the idempotence of $d$, $[d'(a,b,c)]_{\sim}=[(a\mt b)\mt c]_{\sim}$. On the other hand, we proved $[x\mt' y]_{\sim}= [x\mt y]_{\sim}$ for all $x,y\in A$, and therefore
\[
[(a\mt' b)\mt' c]_{\sim} = [(a\mt b)\mt c]_{\sim}=[a]_{\sim}\mt[b]_{\sim}\mt[c]_{\sim}.
\]
Thus, Definition~\ref{regSMBdef} $(i)$ holds, too.
\end{proof}

\begin{prp}\label{SMBtospec}
Let $\m a$ be a finite SMB algebra. Then there are $\m a$-terms $d'(x,y,z)$ and $x\mt'y$ such that $\lb A;\mt',d'\rb$ is a regular SMB algebra. Moreover, the congruence ${\sim}$ remains unchanged and whenever $a,b,c$ are in the same ${\sim}$-class, then $d'(a,b,c)=d(a,b,c)$.
\end{prp}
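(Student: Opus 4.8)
The plan is to build on Proposition~\ref{SMBtoreg}. Applying it to $\m a$ gives terms, which I will denote $x\mt_0 y$ and $d_0(x,y,z)$, such that $\m a_0=\lb A;\mt_0,d_0\rb$ is an SMB algebra over the same congruence ${\sim}$ satisfying clauses $(i)$--$(iii)$ of Definition~\ref{regSMBdef}, with $d_0$ restricting to $d$ on every ${\sim}$-class. The only regularity axiom that can still fail for $\m a_0$ is $(iv)$, the identity $(x\mt_0 y)\mt_0 y\approx x\mt_0 y$: it holds modulo ${\sim}$ by the semilattice laws, and it holds whenever $[x]_{\sim}$ and $[y]_{\sim}$ are $\leq$-comparable (using that $\mt_0$ is the second projection on blocks together with clause $(ii)$), but it can fail when the two classes are incomparable, because then meeting with $y$ on the right need not stabilize after one step.

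To repair this I will use the finite-set argument from the proof of Lemma~\ref{fulltreeiteration}. Since $A$ is finite, for every $b\in A$ the self-map $\lambda_b\colon x\mapsto x\mt_0 b$ of $A$ satisfies $\lambda_b^{|A|!}\circ\lambda_b^{|A|!}=\lambda_b^{|A|!}$. Accordingly I define
\[x\mt' y:=(\cdots((x\mt_0 y)\mt_0 y)\mt_0\cdots)\mt_0 y\qquad(|A|!\text{ applications of }\mt_0),\]
a genuine binary term whose value at a fixed $b$ is $\lambda_b^{|A|!}(x)$. Then $(x\mt' y)\mt' y=x\mt' y$, so $(iv)$ holds for $\mt'$. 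Modulo ${\sim}$ we have $[\lambda_b(x)]_{\sim}=[x]_{\sim}\mt[b]_{\sim}$, hence $[x\mt' y]_{\sim}=[x]_{\sim}\mt[y]_{\sim}$, so $\mt'$ induces on $A/{\sim}$ the same semilattice as $\mt$; on a single ${\sim}$-class $\lambda_b(x)=b$ already, so $x\mt' b=b$ there, i.e. $\mt'$ is still the second projection on blocks; and if $[x]_{\sim}\geq[b]_{\sim}$ then $\lambda_b(x)=x\mt_0 b=b$ by clause $(ii)$ for $\mt_0$, whence $x\mt' b=b$, so clause $(ii)$ survives for $\mt'$.

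It remains to re-establish clauses $(i)$ and $(iii)$, which now involve $\mt'$; I do this exactly as in the proof of Proposition~\ref{SMBtoreg}, with $\mt'$ in place of $\mt_0$ and $d_0$ in place of $d$, setting
\[d'(x,y,z):=d_0\bigl((y\mt' z)\mt' x,\ (x\mt' z)\mt' y,\ (x\mt' y)\mt' z\bigr).\]
If $a,b,c$ share a ${\sim}$-class then $(b\mt' c)\mt' a=a$, $(a\mt' c)\mt' b=b$ and $(a\mt' b)\mt' c=c$ (second projection on blocks), so $d'(a,b,c)=d_0(a,b,c)=d(a,b,c)$; in particular $d'$ is Mal'cev on blocks and agrees there with $d$. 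The three arguments of $d_0$ in the display always lie in the single ${\sim}$-class $[x]_{\sim}\mt[y]_{\sim}\mt[z]_{\sim}$, so idempotence of $d$ (hence of $d_0$ on blocks) forces $[d'(x,y,z)]_{\sim}=[x]_{\sim}\mt[y]_{\sim}\mt[z]_{\sim}=[(x\mt' y)\mt' z]_{\sim}$, which is clause $(i)$; and feeding the triple $\bigl((y\mt' z)\mt' x,(x\mt' z)\mt' y,(x\mt' y)\mt' z\bigr)$ — which again sits in one ${\sim}$-class — back into the defining formula of $d'$ leaves it unchanged (second projection on blocks), so the right-hand side of clause $(iii)$ collapses to $d'(x,y,z)$. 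Hence $\lb A;\mt',d'\rb$ is a regular SMB algebra over the unchanged ${\sim}$ that coincides with $d$ on every ${\sim}$-class, as required. The one thing to be careful about — more a bookkeeping nuisance than a genuine obstacle — is that replacing $\mt_0$ by $\mt'$ forces us to redefine $d_0$ and recheck $(i)$ and $(iii)$; what makes that essentially free is that every triple of arguments handed to $d$ in these verifications collapses into a single ${\sim}$-class, where $\mt'$ acts as the second projection and $d$ as a Mal'cev operation.
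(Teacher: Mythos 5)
Your proof is correct and follows essentially the same route as the paper's: first apply Proposition~\ref{SMBtoreg}, then iterate $\mt$ on the right $|A|!$ times to force identity $(iv)$ via the standard fact about $|A|!$-fold powers of a self-map, and finally redefine $d'$ from the new meet exactly as in Proposition~\ref{SMBtoreg}, rechecking $(i)$ and $(iii)$ by collapsing all arguments into a single ${\sim}$-class. (Your parenthetical claim that $(iv)$ already holds for $\mt_0$ whenever the two classes are $\leq$-comparable is not actually justified by clause $(ii)$ in the case $[x]_{\sim}<[y]_{\sim}$, but this aside plays no role in the argument.)
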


\begin{proof}
Using Proposition~\ref{SMBtoreg}, we assume that $\m a$ already satisfies $(i)-(iii)$ from the definition of a regular SMB algebra. We define $\mt'$ in the following way:
\[
x\mt'y:=((\dots (x\mt y)\mt y\dots )\mt y)\mt y,
\]
where the operation $\mt$ occurs $|A|!$ times. We use the well-known fact that by composing $|A|!$ many times a map $f:A\rightarrow A$ with itself, we obtain a map $g:A\rightarrow A$ which satisfies $g(g(x))=g(x)$ for all $x\in 
A$. Therefore,
\[(x\mt' y)\mt' y\approx x\mt' y,\] 
while still for all $a,b\in A$, $[a\mt'b]_{\sim}=[a\mt b]_{\sim}$ and if $[a]_{\sim} \geq [b]_{\sim}$, then $a\mt' b=b$. Hence, $\mt'$ satisfies the properties $(ii)$ and $(iv)$ of Definition~\ref{regSMBdef}.

Next, $d'$ is defined from $d$ and $\mt'$ the same way as in the proof of Proposition~\ref{SMBtoreg} and the proof of the properties $(i)$ and $(iii)$ of Definition~\ref{regSMBdef} proceeds the same way as in Proposition~\ref{SMBtoreg}.
\end{proof}  

\begin{lm}\label{regtermalpha}
Let $\m a$ be a regular SMB algebra over ${\sim}\in\cn a$. Then for every $\m a$-term $t(x_1,\dots,x_n)$ and all $a_1,\dots,a_n\in A$,
\[[t(a_1,\dots,a_n)]_{\sim}=[a_1]_{\sim}\mt\dots\mt[a_n]_{\sim}.\]
\end{lm}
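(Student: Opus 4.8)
The plan is to prove the statement by induction on the structure of the term $t$ in the signature $\{\mt,d\}$, reading the right-hand side $[a_1]_{\sim}\mt\dots\mt[a_n]_{\sim}$ as the meet, taken in the semilattice $(A/{\sim};\mt)$ of Definition~\ref{SMBdef}$(i)$, over those indices $i$ for which $x_i$ actually occurs in $t$; this agrees with the displayed formula under the usual convention that $t(x_1,\dots,x_n)$ displays precisely its variables, and it is the version that behaves well under induction. Throughout I would use freely that, since $(A/{\sim};\mt)$ is a semilattice, meets of $\sim$-classes may be reassociated, reordered and contracted (by idempotence) at will, that $[s\mt s']_{\sim}=[s]_{\sim}\mt[s']_{\sim}$ for all $s,s'\in A$, and that $[d(s,s',s'')]_{\sim}=[(s\mt s')\mt s'']_{\sim}=[s]_{\sim}\mt[s']_{\sim}\mt[s'']_{\sim}$ for all $s,s',s''\in A$, the last of these being exactly Definition~\ref{regSMBdef}$(i)$ together with the semilattice laws on $A/{\sim}$.

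For the base case, if $t$ is the variable $x_i$ then $t(a_1,\dots,a_n)=a_i$, the only variable occurring in $t$ is $x_i$, and both sides equal $[a_i]_{\sim}$. For the inductive step there are two cases. If $t=s\mt s'$, put $b=s(a_1,\dots,a_n)$ and $b'=s'(a_1,\dots,a_n)$; then $[t(a_1,\dots,a_n)]_{\sim}=[b]_{\sim}\mt[b']_{\sim}$, and applying the induction hypothesis to $s$ and to $s'$, and using that the set of variables occurring in $t$ is the union of those occurring in $s$ and those occurring in $s'$, the right-hand side collapses (by idempotence and commutativity of $\mt$ on $A/{\sim}$) to the meet of the $[a_i]_{\sim}$ over the variables occurring in $t$. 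If $t=d(s,s',s'')$, the same argument works verbatim, now invoking $[d(b,b',b'')]_{\sim}=[b]_{\sim}\mt[b']_{\sim}\mt[b'']_{\sim}$ in place of the clause for $\mt$. This completes the induction.

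I do not expect a real obstacle here: all the content is already packaged in Definition~\ref{regSMBdef}$(i)$ and in the fact that $A/{\sim}$ is a semilattice, and the induction is purely formal. The only point needing a little care is the bookkeeping of which variables occur in a subterm, so that the inductive hypotheses fit together to give exactly the meet over the variables of $t$; if one prefers to minimise even this, one can first invoke Lemma~\ref{regSMBmeetbyd} to rewrite $t$ as a term in the single operation $d$ (which does not enlarge the set of occurring variables), reducing the induction to the two cases ``$t$ a variable'' and ``$t=d(s,s',s'')$''.
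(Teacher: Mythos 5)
Your proof is correct and follows essentially the same route as the paper: a structural induction on $t$ in which the variable case is trivial and the cases $t=s\mt s'$ and $t=d(s,s',s'')$ are handled by the congruence property of $\sim$ together with Definition~\ref{regSMBdef}$(i)$. Your extra care about reading the right-hand side as the meet over the variables that \emph{actually occur} in $t$ is well placed — that is precisely the hypothesis the paper makes explicit in the companion statement Corollary~\ref{regSMBtermsimclass}, and without it the displayed identity would fail for a term not depending on all of $x_1,\dots,x_n$.
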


\begin{proof}
It is a straightforward induction on the complexity of $t$, since all variables and both fundamental operations of $\m a$ satisfy the statement.
\end{proof}

\begin{prp}\label{RegSMBbase}
The following list of identities is an equational base for the variety $\vr r$ of regular SMB algebras:

\begin{description}
\item[Idem1)] $x\mt x\approx x$
\item[Idem2)] $d(x,x,x)\approx x$

\item[Comm)] $(x\mt y)\mt (y\mt x)\approx y\mt x$

\item[Assoc1)] $(x\mt (y\mt z))\mt ((x\mt y)\mt z)\approx(x\mt y)\mt z$
\item[Assoc2)] $((x\mt y)\mt z)\mt (x\mt (y\mt z))\approx x\mt (y\mt z)$

\item[Mal)] $d(x\mt y, y\mt x, y\mt x)\approx d(y\mt x, y\mt x, x\mt y)\approx x\mt y$

\item[Regi1)] $((x\mt y)\mt z)\mt d(x,y,z)\approx d(x,y,z)$
\item[Regi2)] $d(x,y,z)\mt ((x\mt y)\mt z)\approx (x\mt y)\mt z$

\item[Regii1)] $x\mt (x\mt y)\approx x\mt y$
\item[Regii2)] $x\mt (y\mt x)\approx y\mt x$

\item[Regiii)] $d(x,y,z)\approx d((y\mt z)\mt x, (x\mt z)\mt y, (x\mt y)\mt z)$

\item[Regiv)] $(x\mt y)\mt y\approx x\mt y$
\end{description}
\end{prp}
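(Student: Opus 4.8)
The plan is to prove the two inclusions separately: soundness, that every regular SMB algebra satisfies all twelve displayed identities, and completeness, that every algebra $\m a=\langle A;\mt,d\rangle$ satisfying them is a regular SMB algebra. For soundness I would recall from \eqref{charalpha} that in an SMB algebra $u\sim v$ holds precisely when $u\mt v=v$ and $v\mt u=u$, and that $[u\mt v]_{\sim}=[u]_{\sim}\mt[v]_{\sim}$ in the semilattice $\m a/{\sim}$. Then Idem1 and Idem2 are idempotence, Regiii and Regiv are exactly conditions $(iii)$ and $(iv)$ of Definition~\ref{regSMBdef}, and Regi1 together with Regi2 restate condition $(i)$ via \eqref{charalpha}. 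For Comm, Assoc1 and Assoc2 one observes that the two sides of each identity lie in a common $\sim$-class (by commutativity and associativity of the semilattice $\m a/{\sim}$), so that the fact that $\mt$ acts as the second projection on $\sim$-classes upgrades the identity to an equality; for Regii1 and Regii2 one uses instead that $[x]_{\sim}\geq[x\mt y]_{\sim}=[y\mt x]_{\sim}$ together with condition $(ii)$; and Mal holds because $x\mt y$ and $y\mt x$ lie in one $\sim$-class, on which $d$ is Mal'cev.

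For completeness, given an algebra $\m a$ satisfying the twelve identities, I would — mimicking \eqref{charalpha} — define a binary relation on $A$ by
\[a\sim b\quad:\Longleftrightarrow\quad a\mt b=b\ \text{ and }\ b\mt a=a,\]
and show $\m a$ is a regular SMB algebra over $\sim$. First I would check that $\sim$ is an equivalence relation: reflexivity is Idem1, symmetry is trivial, and if $a\sim b$ and $b\sim c$, then substituting these relations into Assoc1 at $(x,y,z)=(a,b,c)$ reduces it to $(a\mt c)\mt c=c$, whence Regiv gives $a\mt c=c$, and the symmetric substitution $(x,y,z)=(c,b,a)$ gives $c\mt a=a$. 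Next I would prove that $\sim$ is a congruence; compatibility with $d$ will follow from compatibility with $\mt$ once one notes, using Regi1 and Regi2, that $d(x,y,z)\sim(x\mt y)\mt z$, so that $a\sim a'$, $b\sim b'$, $c\sim c'$ force $d(a,b,c)\sim(a\mt b)\mt c\sim(a'\mt b')\mt c'\sim d(a',b',c')$. Compatibility with $\mt$ amounts to showing that $a\sim a'$ and $b\sim b'$ imply $a\mt b\sim a'\mt b'$, and by the symmetry of the hypotheses it is enough to prove $(a\mt b)\mt(a'\mt b')=a'\mt b'$; I expect this to go through after first extracting a handful of auxiliary identities — in particular one to the effect that, modulo $\sim$, $a\mt z$ depends only on the $\sim$-class of $a$ — and then chaining these with Comm, Assoc1, Assoc2 and the collapsing identities Regii1, Regii2, Regiv.

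Once the congruence property is in hand, the identities Idem1, Comm, Assoc1, Assoc2 descend to $\m a/{\sim}$ and make it a semilattice under $\mt$; on each $\sim$-class $\mt$ acts as the second projection directly by the definition of $\sim$, and $d$ restricted to a $\sim$-class is Mal'cev because specializing Mal to $x\sim y$ yields $d(x,x,y)=y=d(y,x,x)$ while $d$ is idempotent by Idem2, so $\m a$ is an SMB algebra over $\sim$. The four regularity conditions then also follow: $(i)$ is Regi1 and Regi2 read through the definition of $\sim$; $(ii)$ holds because $[a]_{\sim}\geq[b]_{\sim}$ forces $(a\mt b)\mt b=b$, which Regiv turns into $a\mt b=b$; $(iii)$ is Regiii; and $(iv)$ is Regiv. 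Thus $\m a$ is a regular SMB algebra, and together with soundness this gives that the twelve identities are an equational base for $\vr r$. The hard part will be the congruence property of $\sim$ for $\mt$: at that point one cannot yet substitute $\sim$-equivalent elements inside $\mt$, and $\mt$ is only associative and commutative modulo $\sim$ rather than on the nose, so the rewriting must be steered so that each intermediate expression matches one of the few shapes that Regii1, Regii2 or Regiv can collapse back to an equality; the rest is routine bookkeeping.
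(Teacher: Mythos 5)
Your outline follows the paper's proof exactly: define $a\sim b$ by $a\mt b=b$ and $b\mt a=a$, prove transitivity from {\bf Assoc1)} and {\bf Regiv)}, reduce compatibility of $\sim$ with $d$ to compatibility with $\mt$ via {\bf Regi1)}--{\bf Regi2)}, and then read off the semilattice, Mal'cev and regularity conditions. The soundness half and all of those reductions are correct as you state them.

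However, the one step you defer --- showing that $a\sim a'$ and $b\sim b'$ imply $(a\mt b)\mt(a'\mt b')=a'\mt b'$ --- is precisely the non-routine computation on which the whole proposition rests, and ``I expect this to go through'' is not a proof of it. Moreover, the auxiliary fact you propose to extract first (that modulo $\sim$, $a\mt z$ depends only on the $\sim$-class of $a$) is itself just a one-sided form of the compatibility you are trying to establish, so it does not reduce the difficulty. The paper fills this hole by first upgrading the transitivity Claim to the one-sided statement: if $x\mt y=y$ and $y\mt z=z$ then $x\mt z=z$; applying this with {\bf Regii1)} and {\bf Regii2)} yields $a\mt(a'\mt b')=a'\mt b'$ and $b\mt(a'\mt b')=a'\mt b'$, and then a short chain using {\bf Regiv)}, these two absorption equations, and the {\bf Assoc2)} instance $\bigl((a\mt b)\mt(a'\mt b')\bigr)\mt\bigl(a\mt(b\mt(a'\mt b'))\bigr)=a\mt(b\mt(a'\mt b'))$ collapses $(a\mt b)\mt(a'\mt b')$ to $a'\mt b'$. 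Without exhibiting such a chain (or an equivalent one), your argument has a genuine gap at its central point: nothing you have written rules out the possibility that the twelve identities are too weak to force $\sim$ to be compatible with $\mt$, which would make $\sim$ fail to be a congruence and the completeness direction collapse.
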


\begin{proof}
Let $\m a$ be any model of the above equations. First we define the relation $\sim$: $x\sim y$ iff $x\mt y= y$ and $y\mt x = x$. We need to show that $\sim$ is a congruence. By definition and {\bf Idem1)}, it is clearly a reflexive and symmetric relation.

{\bf Claim.} If $x\mt y=y$ and $y\mt z = z$, then $x\mt z = z$.

From the assumptions, {\bf Assoc1)} and {\bf Regiv)}, we get
\[\begin{gathered}
z=y\mt z=(x\mt y)\mt z=(x\mt(y\mt z))\mt((x\mt y)\mt z)=\\
(x\mt z)\mt (y\mt z)=(x\mt z)\mt z=x\mt z,
\end{gathered}
\]
and the Claim is proved. From two applications of the Claim we obtain that $\sim$ is transitive, so it is an equivalence relation. 

Next we need to prove that $\sim$ is compatible with the operations. Assume that $a\sim a'$, $b\sim b'$ and $c\sim c'$. Then we have $a\mt a'=a'$ by assumption and $a'\mt (a'\mt b')=a'\mt b'$ by {\bf Regii1)}. By Claim, it follows that
\begin{equation}\label{eq6}
a\mt (a'\mt b')=a'\mt b'
\end{equation}
Similarly, from the assumption we know that $b\mt b'=b'$ and from {\bf Regii2)} that $b'\mt(a'\mt b')=a'\mt b'$. By Claim we obtain
\begin{equation}\label{eq7}
b\mt(a'\mt b')=a'\mt b'.
\end{equation}
From {\bf Assoc2)} we get
\begin{equation}\label{eq8}
((a\mt b)\mt (a'\mt b'))\mt(a\mt (b\mt (a'\mt b')))=a\mt (b\mt (a'\mt b')).
\end{equation} 
Using first {\bf Regiv)}, and then the above equations, we compute
\[
\begin{gathered}
(a\mt b)\mt (a'\mt b')=((a\mt b)\mt (a'\mt b'))\mt (a'\mt b')\stackrel{\eqref{eq6}}{=}\\
((a\mt b)\mt (a'\mt b'))\mt (a\mt(a'\mt b'))\stackrel{\eqref{eq7}}{=}\\
((a\mt b)\mt (a'\mt b'))\mt (a\mt (b\mt(a'\mt b')))\stackrel{\eqref{eq8}}{=}\\
a\mt (b\mt(a'\mt b'))\stackrel{\eqref{eq7}}{=}
a\mt(a'\mt b')\stackrel{\eqref{eq6}}{=}a'\mt b'.
\end{gathered}
\]
Analogously (by flipping primes in the above argument) we get
\[(a'\mt b')\mt (a\mt b)=a\mt b,\]
so $(a\mt b)\sim (a'\mt b')$, hence $\sim$ is compatible with $\mt$.

From {\bf Regi1)} and {\bf Regi2)} we know that $d(a,b,c)\sim ((a\mt b)\mt c)$ and $d(a',b',c')\sim ((a'\mt b')\mt c')$, so the compatibility of $\sim$ with $\mt$ implies that $\sim$ is also compatible with $d$. Thus, $\sim$ is a congruence of $\m a$.

Now the identities {\bf Idem1)}, {\bf Comm)}, {\bf Assoc1)} and {\bf Assoc2)} imply that $\m a/\sim$ is a semilattice. From the definition of $\sim$ follows that $\mt$ is the second projection on each $\sim$-class. Taken together with that fact, {\bf Mal)} implies that $d$ is Mal'cev on each $\sim$-class. Therefore, $\m a$ is an SMB algebra.

As we noted a couple of paragraphs above, {\bf Regi1)} and {\bf Regi2)} are equivalent to Definition~\ref{regSMBdef}, property $(i)$ (given our definition of the relation $\sim$). 

Next, assume that $[a]_{\sim}\geq [b]_{\sim}$. Therefore, $[a\mt b]_{\sim}=[b]_{\sim}$, and by definition of ${\sim}$, $(a\mt b)\mt b=b$. By {\bf Regiv)} we know that $(a\mt b)\mt b=a\mt b$, so by transitivity we obtain $a\mt b=b$, thus proving Definition~\ref{regSMBdef}, property $(ii)$.

Definition~\ref{regSMBdef}, properties $(iii)$ and $(iv)$ are actually identities which are included in the equational base as {\bf Regiii)} and {\bf Regiv)}. Therefore, $\m a$ is a regular SMB algebra. On the other hand, the base identities are easily verifiable in each regular SMB algebra, so the proposition is proved.
\end{proof}

\begin{cor}\label{regSMBtermsimclass}
Let $\m a$ be a regular SMB algebra and $t(x_1,\dots,x_n)$ a term in which each variable actually appears. Then for all $a_1,\dots,a_n\in A$,
\[
[t(a_1,\dots,a_n)]_{\sim}=[a_1]_{\sim}\mt\dots\mt[a_n]_{\sim}.\]
\end{cor}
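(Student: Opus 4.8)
The plan is to obtain this from Lemma~\ref{regtermalpha}, or rather from the slightly more informative statement that its one-line inductive proof actually establishes. For a term $s$ in the variables $x_1,\dots,x_n$, let $X_s$ denote the set of those variables that genuinely occur in $s$. First I would prove, by induction on the complexity of $s$, the refined identity
\[[s(a_1,\dots,a_n)]_{\sim}=\bigwedge\{[a_i]_{\sim}:x_i\in X_s\},\]
the meet being taken in the semilattice $\m a/{\sim}$.

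The base case is $s=x_i$, where $X_s=\{x_i\}$ and both sides equal $[a_i]_{\sim}$. In the induction step, write $s=f(s_1,\dots,s_k)$ with $f\in\{\mt,d\}$, so that $X_s=X_{s_1}\cup\dots\cup X_{s_k}$. If $f=\mt$, then $\m a/{\sim}$ being a semilattice gives $[s(\bar a)]_{\sim}=[s_1(\bar a)]_{\sim}\mt[s_2(\bar a)]_{\sim}$; if $f=d$, then property $(i)$ of Definition~\ref{regSMBdef} gives $[d(u,v,w)]_{\sim}=[(u\mt v)\mt w]_{\sim}=[u]_{\sim}\mt[v]_{\sim}\mt[w]_{\sim}$, hence $[s(\bar a)]_{\sim}=[s_1(\bar a)]_{\sim}\mt[s_2(\bar a)]_{\sim}\mt[s_3(\bar a)]_{\sim}$. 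Substituting the inductive hypothesis and then using commutativity, associativity and idempotence of $\mt$ on $\m a/{\sim}$ collapses the union of the sets $X_{s_j}$ into the single meet indexed by $X_s$, which proves the refined identity for $s$.

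Finally I would put $s=t$. The hypothesis that each of $x_1,\dots,x_n$ actually appears in $t$ says precisely that $X_t=\{x_1,\dots,x_n\}$, so the refined identity becomes $[t(a_1,\dots,a_n)]_{\sim}=[a_1]_{\sim}\mt\dots\mt[a_n]_{\sim}$, as claimed. There is no real obstacle here: the only thing the corollary adds to Lemma~\ref{regtermalpha} is the remark that the meet on the right-hand side ranges over the variables that truly occur in the term, so that demanding all of $x_1,\dots,x_n$ to occur is exactly what is needed to obtain the full meet.
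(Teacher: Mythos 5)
Your proof is correct and follows essentially the same route as the paper: a straightforward induction on term complexity, using the semilattice structure of $\m a/{\sim}$ for the $\mt$ step and property $(i)$ of Definition~\ref{regSMBdef} for the $d$ step. The only difference is that you make explicit the bookkeeping of which variables genuinely occur (the set $X_s$), which the paper leaves implicit in its one-sentence sketch.
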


\begin{proof}
The proof is a standard induction on the complexity of $t$. When $t$ is a variable it is clear, while the step follows from the definition of SMB algebras and from the property $(i)$ of Definition~\ref{regSMBdef} (depending on which operation, $\mt$ or $d$, we use in the step).
\end{proof}

\section{Principal congruences of regular SMB algebras}

This section is devoted to limiting the length of Mal'cev chains needed to generate a principal congruence of an SMB algebra. We start with a well-known lemma, whose proof we include for reader's convenience.

\begin{lm}\label{MalcevCG}
Let $\m m=(M;d)$ be a Mal'cev algebra. $R\subseteq M\times M$ is a congruence of $\m m$ iff $R$ is a subuniverse of $\m m\times\m m$ which contains the diagonal.
\end{lm}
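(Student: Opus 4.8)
The plan is to prove both directions of the equivalence. The forward direction is almost immediate: every congruence $R$ of any algebra is, by definition, a subuniverse of the square of that algebra, and it contains the diagonal because it is reflexive. So the content is entirely in the reverse direction.

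For the reverse direction, suppose $R \subseteq M \times M$ is a subuniverse of $\m m \times \m m$ containing the diagonal $\Delta = \{(a,a) : a \in M\}$. I must show $R$ is an equivalence relation; compatibility with $d$ (and hence with all term operations) is automatic since $R$ is a subuniverse of $\m m^2$. Reflexivity is exactly the hypothesis $\Delta \subseteq R$. For symmetry, suppose $(a,b) \in R$. Then since $(b,b), (a,b), (a,a) \in R$ and $R$ is closed under the coordinatewise action of $d$, we get $(d(b,a,a), d(b,b,a)) \in R$; using that $d$ is a Mal'cev operation, $d(b,a,a) = b$ and $d(b,b,a) = a$, so $(b,a) \in R$. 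For transitivity, suppose $(a,b), (b,c) \in R$. Then $(a,b), (b,b), (b,c) \in R$, so applying $d$ coordinatewise gives $(d(a,b,b), d(b,b,c)) = (a,c) \in R$, again using the Mal'cev identities. Hence $R$ is a congruence.

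There is no real obstacle here; the only point requiring any care is being explicit that a subuniverse of $\m m \times \m m$ is automatically compatible with the single operation $d$, and therefore nothing beyond the equivalence-relation axioms needs to be checked. I would present the argument in the order: (1) forward direction (one sentence); (2) set up the reverse direction and note compatibility is free; (3) reflexivity; (4) symmetry via $d(b,a,a)=b$, $d(b,b,a)=a$; (5) transitivity via $d(a,b,b)=a$, $d(b,b,c)=c$.

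\begin{proof}
If $R$ is a congruence of $\m m$, then by definition $R$ is a subuniverse of $\m m \times \m m$, and $R$ is reflexive, so $R$ contains the diagonal.

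Conversely, suppose $R$ is a subuniverse of $\m m \times \m m$ containing the diagonal. Since $R$ is a subuniverse of $\m m \times \m m$, it is closed under the coordinatewise application of $d$; consequently $R$ is compatible with $d$, and hence with every term operation of $\m m$. It remains only to check that $R$ is an equivalence relation. Reflexivity is exactly the assumption that $R$ contains the diagonal. For symmetry, let $(a,b) \in R$. Then $(b,b), (a,b), (a,a) \in R$, so applying $d$ coordinatewise yields
\[
(d(b,a,a), d(b,b,a)) = (b,a) \in R,
\]
using that $d$ is a Mal'cev operation on $M$. For transitivity, let $(a,b), (b,c) \in R$. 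Then $(a,b), (b,b), (b,c) \in R$, and applying $d$ coordinatewise gives
\[
(d(a,b,b), d(b,b,c)) = (a,c) \in R,
\]
again by the Mal'cev identities. Thus $R$ is a congruence of $\m m$.
\end{proof}
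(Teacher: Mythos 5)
Your proof is correct and follows essentially the same argument as the paper: compatibility is free from the subuniverse hypothesis, reflexivity is the diagonal, and symmetry and transitivity come from applying $d$ coordinatewise to the appropriate triples of pairs. The only cosmetic difference is the choice of triple for symmetry (the paper uses $(a,a),(a,b),(b,b)$, you use $(b,b),(a,b),(a,a)$), which amounts to the same computation.
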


\begin{proof}
Assume $R$ is reflexive and compatible with $d$. We need to prove that $R$ is symmetric and transitive. If $(a,b)\in R$ then by reflexivity and compatibility \[(b,a)=(d(a,a,b),d(a,b,b))=d^{\sm m\times\sm m}((a,a),(a,b),(b,b))\in R.\]
If $(a,b),(b,c)\in R$, then by reflexivity and compatibility
\[(a,c)=(d(a,b,b),d(b,b,c))=d^{\sm m\times\sm m}((a,b),(b,b),(b,c))\in R.\]
\end{proof}

\begin{df}\label{SMBDabddef}
Let $\m a$ be an SMB algebra and $a,b\in A$. By $D_{\{a,b\}}$ we denote the subuniverse of $\m a\times\m a$ generated by $\{(a,b),(b,a)\}\cup\{(c,c):c\in A\}$.
\end{df}

We note that $D_{\{a,b\}}=\{(p(a,b),p(b,a)):p\in \poln{2}{a}\}$. Moreover, if $\m a$ is a regular SMB algebra, $(c,d)\in D_{\{a,b\}}$ and $c\neq d$, then $((a\mt b)\mt c)\sim((a\mt b)\mt d)$. Let us only sketch out this argument, as it is easy. Assume that $(c,d)=(p(a,b),p(b,a))$, $p(x,y)=t(x,y,c_1,\dots,c_k)$ for some term $t$, and $c_i\in A$ are constants. Then we can apply Corollary~\ref{regSMBtermsimclass} to the term $t(x,y,z_1,\dots,z_k)$ and make a case analysis based on how many among $x$ and $y$ actually occur in $t$. In all three cases we obtain from Corollary~\ref{regSMBtermsimclass} that $c\sim d$.

\begin{thm}\label{regSMBCg}
Let $\m a$ be a regular SMB algebra and $a,b\in A$. Then \[\Cg(a,b)=D_{\{a,b\}}\circ D_{\{a,b\}}\circ D_{\{a,b\}}.\]
\end{thm}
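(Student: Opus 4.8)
The plan is to prove the two inclusions separately. The inclusion $D_{\{a,b\}}\circ D_{\{a,b\}}\circ D_{\{a,b\}}\subseteq\Cg(a,b)$ is the easy direction: $D_{\{a,b\}}$ is generated by pairs that all lie in $\Cg(a,b)$ (the diagonal is in every congruence, and $(a,b),(b,a)\in\Cg(a,b)$ since congruences are symmetric), so $D_{\{a,b\}}\subseteq\Cg(a,b)$, and $\Cg(a,b)$ is transitive, so any finite relational composition of $D_{\{a,b\}}$ with itself stays inside $\Cg(a,b)$. The substance is in the reverse inclusion $\Cg(a,b)\subseteq D_{\{a,b\}}\circ D_{\{a,b\}}\circ D_{\{a,b\}}$, for which it suffices to show that $R:=D_{\{a,b\}}\circ D_{\{a,b\}}\circ D_{\{a,b\}}$ is itself a congruence of $\m a$ containing $(a,b)$; then $\Cg(a,b)\subseteq R$ by minimality.

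That $R$ contains $(a,b)$ and is reflexive is immediate from the diagonal being in $D_{\{a,b\}}$, and $R$ is symmetric because $D_{\{a,b\}}$ is symmetric as a relation (the generating set is symmetric, hence so is the generated subuniverse) and composition of a symmetric relation with itself an odd number of times is symmetric. So the two real tasks are: (1) $R$ is a subuniverse of $\m a\times\m a$, i.e.\ closed under $\mt$ and $d$; and (2) $R$ is transitive. For (1), the standard trick is to show $D_{\{a,b\}}\circ D_{\{a,b\}}$ is already closed under $d$ — using that $d$ is "Mal'cev-like" — and then that composing with one more copy of $D_{\{a,b\}}$ preserves this; here the key structural fact is the remark preceding the theorem, namely that for any $(c,d)\in D_{\{a,b\}}$ with $c\neq d$ one has $(a\mt b)\mt c\sim (a\mt b)\mt d$, so that off the diagonal $D_{\{a,b\}}$ lives inside a single $\sim$-class in each coordinate shifted by $(a\mt b)$, where $d$ is a genuine Mal'cev operation. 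One wants to argue that on such $\sim$-classes the three-fold composition $D\circ D\circ D$ is closed under $d$ by the usual Mal'cev-algebra computation (as in Lemma~\ref{MalcevCG}), while the behavior modulo $\sim$ is governed by the semilattice $\mt$ and Lemma~\ref{regtermalpha}, which is compatible with everything.

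For (2), transitivity, the heart of the matter is a "three compositions suffice" lemma: one must show $D_{\{a,b\}}\circ D_{\{a,b\}}\circ D_{\{a,b\}}\circ D_{\{a,b\}}\subseteq D_{\{a,b\}}\circ D_{\{a,b\}}\circ D_{\{a,b\}}$, i.e.\ the composition stabilizes at length three. I expect this to be the main obstacle. The idea: suppose $(a_0,a_1,a_2,a_3,a_4)$ witnesses a length-four fence, with consecutive pairs in $D_{\{a,b\}}$. Using the remark above, all the $a_i$ that are "active" (i.e.\ lie on a nontrivial step) satisfy $(a\mt b)\mt a_i\sim (a\mt b)\mt a_j$; so modulo $\sim$ the whole fence collapses and the real content is inside one $\sim$-class, after translating by $(a\mt b)$. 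Inside a Mal'cev block the relevant relation $D_{\{a,b\}}$ restricted there behaves like (a coset of) a congruence of the Mal'cev algebra — by Lemma~\ref{MalcevCG}, being a reflexive subalgebra closed under $d$ makes it transitive — so two compositions already suffice there, and the extra composition provides the slack needed to handle the passage in and out of the block and the diagonal padding. One must be careful that "$(a\mt b)\mt(-)$" is not injective, so a pair $(c,d)\in D_{\{a,b\}}$ with $(a\mt b)\mt c=(a\mt b)\mt d$ need not have $c=d$; here one uses regularity, specifically property $(ii)$ and Lemma~\ref{regSMBmeetbyd}, to recover $c$ and $d$ from their images together with the step data, essentially because on the block $d$ is Mal'cev and $\mt$ is second projection. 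Assembling these observations — collapse mod $\sim$, Mal'cev transitivity within a block, and bookkeeping for the endpoints — yields the stabilization, hence transitivity, hence that $R$ is a congruence, completing the proof.
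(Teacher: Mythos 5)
Your overall strategy is the same as the paper's: show that $R=D_{\{a,b\}}\circ D_{\{a,b\}}\circ D_{\{a,b\}}$ is a congruence containing $(a,b)$, observe that reflexivity, symmetry and the containment $R\subseteq\Cg(a,b)$ are routine, and concentrate on transitivity, which is to be proved by pushing the interior of a $D_{\{a,b\}}$-chain into a single $\sim$-class and invoking Lemma~\ref{MalcevCG} there. Two small remarks on the routine part: compatibility of $R$ needs no Mal'cev computation at all, since a relational composition of subuniverses of $\m a\times\m a$ is again a subuniverse (this is how the paper dispatches it), so your plan to show closure of $D_{\{a,b\}}\circ D_{\{a,b\}}$ under $d$ separately is an unnecessary detour; and your reduction of transitivity to ``a four-fold composition is contained in the three-fold one'' is legitimate (the paper instead treats a length-six chain directly), since it iterates to give containment of all longer compositions.

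The genuine gap is in the step you yourself flag as the main obstacle: connecting the two endpoints of the chain to the common Mal'cev block using only one $D_{\{a,b\}}$-step at each end. This is where essentially all of the technical content of the theorem sits, and your sketch of it does not work as stated. Translating uniformly by $(a\mt b)\mt(-)$ does not fix the endpoints: if the first step is $(a_0,a_1)=(p(a,b),p(b,a))$ with $a_0\neq a_1$ and the witnessing polynomial $p$ depends on only one of its two distinguished variables, say the first, then Corollary~\ref{regSMBtermsimclass} gives $[a_0]_\sim\leq[a]_\sim$ but not $[a_0]_\sim\leq[a\mt b]_\sim$, so $(a\mt b)\mt a_0\neq a_0$ in general and the pair $(a_0,(a\mt b)\mt a_1)$ is not visibly in $D_{\{a,b\}}$. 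Non-injectivity of the translation is a red herring; what is actually required is that the translating unary polynomial fix the endpoint pointwise, so that applying it to the pair $(a_0,a_1)\in D_{\{a,b\}}$ yields a pair of the form $(a_0,e)$ with $e$ in the block. The paper achieves this by a case analysis: since $p$ must depend on at least one distinguished variable, Corollary~\ref{regSMBtermsimclass} together with Definition~\ref{regSMBdef}~$(ii)$ yields $a\mt a_0=a_0$ or $b\mt a_0=a_0$ (and similarly at the other end), and one then uses $a\mt(-)$ and $b\mt(a\mt(-))$, chosen according to the case, as the translations; these fix the relevant endpoint, carry $D_{\{a,b\}}$-pairs to $D_{\{a,b\}}$-pairs, and place all interior points in one $\sim$-class. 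Without this case analysis and the correct choice of translating polynomials your argument does not close, so as written the proposal is incomplete precisely at its crux.
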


\begin{proof}
$D_{\{a,b\}}\circ D_{\{a,b\}}\circ D_{\{a,b\}}$ is clearly reflexive and symmetric because $D_{\{a,b\}}$ is, and any finite composition of compatible relations is compatible. It remains to prove transitivity. Suppose that \[(x_1,x_2),(x_2,x_3),(x_3,x_4),(x_4,x_5),(x_5,x_6),(x_6,x_7)\in D_{\{a,b\}}.\]
Moreover, assume that at most two of these pairs are on the diagonal (or we would be done), and that $x_1\neq x_2$ and $x_6\neq x_7$ (or we would move the diagonal pair into the middle of our sequence $x_1,\dots,x_7$).

In view of the remarks immediately preceding the theorem and Definition~\ref{regSMBdef} $(ii)$, there are essentially two cases:

{\bf Case 1}: $a\mt x_1=x_1$, $a\mt x_7=x_7$, $b\mt x_2=x_2$ and $b\mt x_6=x_6$. From Definition~\ref{regSMBdef} $(ii)$ we know that $b\mt (a\mt x_2)=a\mt x_2$ and, similarly, $b\mt (a\mt x_6)=a\mt x_6$. Then we obtain the following pairs in $D_{\{a,b\}}$:
\begin{equation}\label{eq9}
\begin{gathered}
(x_1,a\mt x_2),(a\mt x_2,b\mt (a\mt x_3)),(b\mt (a\mt x_3),b\mt (a\mt x_4)),\\(b\mt (a\mt x_4),b\mt (a\mt x_5)),(b\mt (a\mt x_5),a\mt x_6),(a\mt x_6,x_7)\in D_{\{a,b\}}.
\end{gathered}
\end{equation}
Since for $i=2,3,4,5$ there exist polynomials $p_i\in\poln{2}{a}$ such that \[\{p_i(a,b),p_i(b,a)\}=\{x_i,x_{i+1}\},\] by Corollary~\ref{regSMBtermsimclass} we obtain that $b\mt (a\mt x_i)\sim b\mt (a\mt x_{i+1})$ for $i=2,3,4,5$. Therefore, all pairs in \eqref{eq9}, except for $(x_1,a\mt x_2)$ and $(a\mt x_6,x_7)$ are in the same $\sim$-class $B$. By Lemma~\ref{MalcevCG}, $D_{\{a,b\}}\cap(B\times B)$ is a congruence of the Mal'cev algebra $(B;d)$ and therefore it is transitive. Hence, $(a\mt x_2,a\mt x_6)\in D_{\{a,b\}}$, and since we already know that $(x_1,a\mt x_2),(a\mt x_6,x_7)\in D_{\{a,b\}}$, we are done in this case.

{\bf Case 2}: $a\mt x_1=x_1$, $b\mt x_7=x_7$, $b\mt x_2=x_2$ and $a\mt x_6=x_6$. Similarly as above we have that
\begin{equation}\label{eq10}
\begin{gathered}
(x_1,a\mt x_2),(a\mt x_2,b\mt (a\mt x_3)),(b\mt (a\mt x_3),b\mt (a\mt x_4)),\\(b\mt (a\mt x_4),b\mt (a\mt x_5)),(b\mt (a\mt x_5),b\mt x_6),(b\mt x_6,x_7)\in D_{\{a,b\}}.
\end{gathered}
\end{equation}
The only difference perhaps requiring comment is that we used $b\mt x_6=b\mt (a\mt x_6)$ to conclude $(b\mt (a\mt x_5),b\mt x_6)\in D_{\{a,b\}}$. Now, as above, $a\mt x_2$, $b\mt (a\mt x_3)$, $b\mt (a\mt x_4)$, $b\mt (a\mt x_5)$ and $b\mt x_6$ all lie in the same $\sim$-class and the case concludes by an application of Lemma~\ref{MalcevCG}, just like the previous one.
\end{proof}

\begin{cor}\label{6polyCg}
If $\m a$ is a regular SMB algebra and $a,b,c,d$ are such that $(c,d)\in\Cg(a,b)$, then there exist unary polynomials $p_1,\dots,p_6\in\poln{1}{a}$ and elements $c=e_0,e_1,\dots,e_5,e_6=d\in A$ so that $\{p_i(a),p_i(b)\}=\{e_{i-1},e_i\}$ for $i=1,2,\dots,6$.
\end{cor}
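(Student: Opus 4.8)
The plan is to deduce Corollary~\ref{6polyCg} directly from Theorem~\ref{regSMBCg} together with the description $D_{\{a,b\}}=\{(p(a,b),p(b,a)):p\in\poln{2}{a}\}$ recorded just before that theorem. By Theorem~\ref{regSMBCg} we have $(c,d)\in D_{\{a,b\}}\circ D_{\{a,b\}}\circ D_{\{a,b\}}$, so there are $f_1,f_2\in A$ with $(c,f_1),(f_1,f_2),(f_2,d)\in D_{\{a,b\}}$. The whole task then reduces to showing that a single $D_{\{a,b\}}$-pair can always be traced by exactly two ``Mal'cev steps'' of the form $\{r(a),r(b)\}$ with $r\in\poln{1}{a}$; concatenating the three resulting length-$2$ traces yields the required length-$6$ trace with seven elements $e_0,\dots,e_6$.

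The key step is a midpoint trick. Suppose $(u,v)\in D_{\{a,b\}}$, say $(u,v)=(q(a,b),q(b,a))$ for some $q\in\poln{2}{a}$. Put $w:=q(a,a)$ and define the unary polynomials $r_1(x):=q(a,x)$ and $r_2(x):=q(x,a)$, both of which lie in $\poln{1}{a}$ since they arise from $q$ by fixing one argument to the constant $a\in A$. Then $\{r_1(a),r_1(b)\}=\{w,u\}$ and $\{r_2(a),r_2(b)\}=\{w,v\}$, so $u,w,v$ is a trace of length $2$ joining $u$ to $v$ through unary polynomials. This works verbatim even when $u=v$ or when $w$ coincides with $u$ or $v$; such degeneracies merely make some $e_i$ equal to a neighbour, which is harmless.

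Applying this to each of $(c,f_1)$, $(f_1,f_2)$, $(f_2,d)$ produces midpoints $w_1,w_2,w_3$ together with the associated unary polynomials, and setting $e_0=c$, $e_1=w_1$, $e_2=f_1$, $e_3=w_2$, $e_4=f_2$, $e_5=w_3$, $e_6=d$ gives a trace of length exactly $6$, as claimed. The only points requiring a little care are: (i) quoting correctly the remark that $D_{\{a,b\}}$ equals the set of pairs $(p(a,b),p(b,a))$ for binary polynomials $p$, which is the standard fact that a subuniverse of $\m a^2$ generated by a set together with the diagonal consists exactly of such pairs; and (ii) insisting on length exactly $6$ rather than at most $6$ — if Theorem~\ref{regSMBCg} is witnessed with fewer genuine steps (for instance if $(c,d)\in D_{\{a,b\}}$ already, or if $c=d$), one pads the trace using constant polynomials $p(x)\equiv e$, whose pair $(e,e)$ lies on the diagonal, to reach the full length $6$. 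I expect no real obstacle: the substance is entirely contained in Theorem~\ref{regSMBCg}, and this corollary is essentially its routine unwinding.
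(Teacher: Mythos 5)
Your proof is correct and takes essentially the same approach as the paper: the paper likewise decomposes $(c,d)$ into three $D_{\{a,b\}}$-pairs via Theorem~\ref{regSMBCg}, writes each as $(q_i(a,b),q_i(b,a))$ for binary polynomials $q_i$, and inserts the midpoints $e_{2i-1}:=q_i(a,a)$, with the unary polynomials obtained by fixing one argument of $q_i$ to $a$. Your extra remarks on degenerate cases and padding are harmless details the paper leaves implicit.
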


\begin{proof}
We use Theorem~\ref{regSMBCg} and the description of $D_{\{a,b\}}$ from the remarks above Theorem~\ref{regSMBCg} to obtain $e_2,e_4\in A$ and binary polynomials $q_1,q_2,q_3\in \poln{2}{a}$ such that $\{q_i(a,b),q_i(b,a)\}=\{e_{2i-2},e_{2i}\}$ for $i=1,2,3$ (where $e_0=c$ and $e_6=d$). Then we introduce $e_{2i-1}:=q_i(a,a)$ for $i=1,2,3$. The result follows.
\end{proof}

\section{Toward Park's Conjecture for SMB algebras}

In this section we write our partial results towards Park's Conjecture for SMB algebras. If we manage to complete its proof, we will probably write another paper, and call it SMB III.

The next lemma is very basic and can be stated much more generally, we include it just to clarify some steps in later proofs.

\begin{lm}\label{SMBCgsim}
Let $\m a$ be an SMB algebra over $\sim$ (not necessarily regular) and $a,b,c,d\in A$. Then the following are equivalent:
\begin{enumerate}
\item $([c]_{\sim},[d]_{\sim})\in\Cg^{\sm a/{\sim}}([a]_{\sim},[b]_{\sim})$.
\item $(c,d)\in(\Cg^{\sm a}(a,b)\jn{\sim})$.
\item There exist $k\in\omega$, unary polynomials $p_1,\dots,p_k\in \poln{1}{a}$ and $c=c_0,d_0,c_1,d_1,\dots,c_k,d_k=d$ in $A$ such that $\{d_{1-1},c_i\}=\{p_i(a),p_i(b)\}$ and $c_i\sim d_i$ for all $i\leq k$.
\end{enumerate}
\end{lm}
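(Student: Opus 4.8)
The plan is to prove the three conditions equivalent by establishing the cycle $(3)\Rightarrow(2)\Rightarrow(1)\Rightarrow(3)$, which keeps each implication short and uses only standard congruence-generation facts plus the definition of SMB algebras.

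For $(3)\Rightarrow(2)$: given the polynomials $p_i$ and the chain $c=c_0,d_0,c_1,d_1,\dots,c_k,d_k=d$, observe that for each $i$ the pair $(p_i(a),p_i(b))$ lies in $\Cg^{\sm a}(a,b)$, hence $(c_i,d_{i-1})\in\Cg^{\sm a}(a,b)$ (unordered, so either way the pair is in the congruence). Also $c_i\sim d_i$, so $(c_i,d_i)\in{\sim}\usub\Cg^{\sm a}(a,b)\jn{\sim}$. Chaining $c_0,d_0,c_1,d_1,\dots$ and using transitivity of the join congruence, we get $(c,d)\in\Cg^{\sm a}(a,b)\jn{\sim}$. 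For $(2)\Rightarrow(1)$: apply the canonical surjection $\pi\colon\m a\to\m a/{\sim}$; since $\pi$ is a homomorphism with kernel containing ${\sim}$, it maps $\Cg^{\sm a}(a,b)\jn{\sim}$ into $\Cg^{\sm a/{\sim}}([a]_{\sim},[b]_{\sim})$ (indeed the image of $\Cg^{\sm a}(a,b)$ under $\pi$ generates exactly $\Cg^{\sm a/{\sim}}([a]_{\sim},[b]_{\sim})$, and the ${\sim}$-part collapses to the diagonal), so $([c]_{\sim},[d]_{\sim})\in\Cg^{\sm a/{\sim}}([a]_{\sim},[b]_{\sim})$.

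For $(1)\Rightarrow(3)$, which is the substantive direction: by Mal'cev's description of principal congruences applied in $\m a/{\sim}$, if $([c]_{\sim},[d]_{\sim})\in\Cg^{\sm a/{\sim}}([a]_{\sim},[b]_{\sim})$ then there is a chain of unary polynomials $\bar q_1,\dots,\bar q_k$ of $\m a/{\sim}$ and elements $[c]_{\sim}=u_0,u_1,\dots,u_k=[d]_{\sim}$ of $A/{\sim}$ with $\{u_{i-1},u_i\}=\{\bar q_i([a]_{\sim}),\bar q_i([b]_{\sim})\}$. Each unary polynomial $\bar q_i$ of $\m a/{\sim}$ lifts to a unary polynomial $p_i$ of $\m a$ (replace the constants appearing in $\bar q_i$ by any ${\sim}$-representatives and keep the same term); then $p_i$ induces $\bar q_i$ on the quotient. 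Now set $c_0=d_0:=c$, and having chosen $d_{i-1}$ with $[d_{i-1}]_{\sim}=u_{i-1}$, let $c_i:=p_i(a)$ or $p_i(b)$ — whichever one has ${\sim}$-class equal to... wait, both of $\{[p_i(a)]_{\sim},[p_i(b)]_{\sim}\}=\{u_{i-1},u_i\}$, so exactly one of $p_i(a),p_i(b)$ lies in $u_{i-1}$; but we need $c_i\sim d_{i-1}$ with $d_{i-1}$ already fixed, so we choose the labeling of the unordered pair $\{p_i(a),p_i(b)\}=\{c_i,\,\cdot\,\}$ so that $c_i$ is the one in class $u_{i-1}$, set $d_i$ to be the other one (in class $u_i$), and continue. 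This yields $\{d_{i-1},c_i\}$... here one must be slightly careful: the statement asks for $\{d_{i-1},c_i\}=\{p_i(a),p_i(b)\}$, so in fact $c_i$ and $d_{i-1}$ should be the two members of $\{p_i(a),p_i(b)\}$, and then $d_i$ is chosen freely with $d_i\sim c_i$; since $[c_i]_{\sim}\in\{u_{i-1},u_i\}$ we relabel so $c_i$ is in class $u_i$, $d_{i-1}$ in class $u_{i-1}$ (consistent with the previously chosen $d_{i-1}$), and pick $d_i:=c_i$. Then $c_i\sim d_i$ trivially and the construction closes up, giving $(3)$.

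The main obstacle is purely bookkeeping: matching the unordered-pair convention $\{d_{i-1},c_i\}=\{p_i(a),p_i(b)\}$ in $(3)$ against the Mal'cev chain in the quotient, and verifying that the lift $p_i$ of $\bar q_i$ exists and induces $\bar q_i$ (this is immediate since polynomial operations of a quotient are exactly the images of polynomial operations, as idempotent/term structure is preserved by $\pi$). No deep algebra is needed beyond Mal'cev's congruence-generation lemma and the homomorphism theorems; the SMB hypothesis is not even essential here, as the authors note, but it guarantees ${\sim}$ is a genuine congruence so that all the quotient constructions make sense.
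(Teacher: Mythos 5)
Your cycle of implications is the same as the paper's: $(iii)\Rightarrow(ii)$ is immediate, $(ii)\Rightarrow(i)$ is passage to the quotient, and the substance is lifting a Mal'cev chain from $\m a/{\sim}$ for $(i)\Rightarrow(iii)$. The two easy implications are fine, and the lifting of $\bar q_i$ to $p_i$ by choosing $\sim$-representatives of the constants is exactly what the paper does. However, the bookkeeping in $(i)\Rightarrow(iii)$ --- which you yourself flag as the delicate point --- ends in an assignment that does not satisfy condition $(iii)$. You finish by declaring ``pick $d_i:=c_i$'' and calling $d_i$ free. It is not free: the requirement $\{d_i,c_{i+1}\}=\{p_{i+1}(a),p_{i+1}(b)\}$ at the next index forces $d_i$ to be one of $p_{i+1}(a),p_{i+1}(b)$, whereas $c_i$ is one of $p_i(a),p_i(b)$, so in general setting $d_i:=c_i$ breaks the next link of the chain. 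The same problem occurs at the start: you set $c_0=d_0:=c$, but $d_0$ must be one of $p_1(a),p_1(b)$, and there is no reason $c$ is such an element (only $c_0$ is required to equal $c$). Your parenthetical ``consistent with the previously chosen $d_{i-1}$'' papers over exactly this inconsistency.

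The repair is one line and is what the paper does: for each $i$ let $d_{i-1}$ be whichever of $p_i(a),p_i(b)$ lies in the class $u_{i-1}$ and let $c_i$ be the other one (which lies in $u_i$); then set $c_0:=c$ and $d_k:=d$. Each internal relation $c_i\sim d_i$ holds because $c_i$ (coming from $p_i$) and $d_i$ (coming from $p_{i+1}$) both lie in $u_i$, and the endpoint relations $c_0\sim d_0$ and $c_k\sim d_k$ hold because $d_0\in u_0=[c]_{\sim}$ and $c_k\in u_k=[d]_{\sim}$. With that correction your argument coincides with the paper's proof.
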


\begin{proof}
$(i)\Rightarrow(iii)$: If $([c]_{\sim},[d]_{\sim})\in(\Cg^{\sm a/{\sim}}([a]_{\sim},[b]_{\sim})$ then there exist $k\in\omega$, unary polynomials $q_1,\dots,q_k\in\poln{1}{a}/{\sim}$ and $[c]_\sim=[e_0]_{\sim},[e_1]_{\sim},\dots,[e_k]_{\sim}=[d]_{\sim}$ such that $\{[e_{i-1}]_{\sim},[e_i]_{\sim}\}=\{q_i([a]_{\sim}),q_i([b]_{\sim})\}$. We construct the polynomials $p_i\in \poln{1}{a}$ from $q_i$ by selecting representatives of $\sim$-classes so that whenever $q_i=t_i(x,[u_1]_{\sim},\dots,[u_n]_{\sim})$, we define $p_i=t_i(x,u_1,\dots,u_n)$. If $q_i([a]_{\sim})=[e_{i-1}]_\sim$, we say $d_{i-1}:=p_i(a)$ and $c_i:=p_i(b)$, while in the event that $q_i([a]_{\sim})=[e_i]_\sim$, we say $d_{i-1}:=p_i(b)$ and $c_i:=p_i(a)$. The obtained sequence of $c_i,d_j$ is as desired since from $q_i([u]_\sim)=q_{i+1}([v]_\sim)$ follows $p_i(u)\sim p_i(v)$.

$(iii)\Rightarrow(ii)$ is obvious, and $(ii)\Rightarrow(i)$ follows by taking the Mal'cev chains used in generating pairs in $\Cg^{\sm a}(a,b)$ and replacing the parameters with their $\sim$-classes.
\end{proof}

\begin{lm}\label{SMBCgVsimbelow}
Let $\vr v$ be a finitely generated variety of SMB algebras, $\m a\in \vr v$ an SMB algebra over $\sim$, $a,b,c,d\in A$ and $(c,d)\in(\Cg(a,b)\jn{\sim})$. Then there exist $e,f\in A$ such that $(c,e),(d,f)\in \Cg(a,b)$, $e\sim f$ and that $[c]_{\sim}\mt [d]_{\sim}\geq [e]_{\sim}$.
\end{lm}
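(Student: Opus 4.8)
The plan is to exploit the characterization in Lemma~\ref{SMBCgsim}$(iii)$: since $(c,d)\in\Cg(a,b)\jn{\sim}$, we may fix a witnessing zig-zag $c=c_0\mathrel{\sim}d_0$, $\{d_{i-1},c_i\}=\{p_i(a),p_i(b)\}$, $c_i\mathrel{\sim}d_i$ for $i\le k$, with $d_k=d$. The idea is to apply the meet operation $\mt$ by $[c]_{\sim}\mt[d]_{\sim}$ (or rather by some suitable element, e.g.\ $c\mt d$ precomposed appropriately) simultaneously to every term in the chain. Concretely, I would look at the polynomials $q_i(x):=(c\mt d)\mt p_i(x)$ (or, to be safe about getting the $\sim$-class right, $q_i(x):=r\mt p_i(x)$ where $r$ realizes $[c]_{\sim}\mt[d]_{\sim}$), which are again unary polynomials of $\m a$; then $\{q_i(a),q_i(b)\}=\{r\mt d_{i-1},r\mt c_i\}$, and $(r\mt d_{i-1}, r\mt c_i)\in\Cg(a,b)$ since $\Cg(a,b)$ is compatible with the unary polynomial $x\mapsto r\mt p_i(x)$.

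The key point is then that \emph{all} the elements $r\mt c_i$ and $r\mt d_i$ lie in a single $\sim$-class. Since $c_i\sim d_i$, we have $[r]_{\sim}\mt[c_i]_{\sim}=[r]_{\sim}\mt[d_i]_{\sim}$, so it suffices that this common value is the same for all $i$. Using that $(A/{\sim};\mt)$ is a semilattice and the chain connects $[c]_{\sim}$ to $[d]_{\sim}$ (each consecutive pair differing by a unary-polynomial image, hence — by Lemma~\ref{regtermalpha} / Corollary~\ref{regSMBtermsimclass}, or for the non-regular case a direct semilattice computation on the trace of the polynomial — lying in comparable or at least controlled $\sim$-classes), one checks that $[r]_{\sim}\mt[c_i]_{\sim}$ is constant and equals $[c]_{\sim}\mt[d]_{\sim}$, provided $[r]_{\sim}\le[c]_{\sim}\mt[d]_{\sim}$; taking $r$ with $[r]_{\sim}=[c]_{\sim}\mt[d]_{\sim}$ exactly, absorption gives $[r\mt c_i]_{\sim}=[r]_{\sim}=[c]_{\sim}\mt[d]_{\sim}$ for every $i$. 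Hence $e:=r\mt c_0=r\mt c$ and $f:=r\mt d_k=r\mt d$ satisfy $e\sim f$ and $[e]_{\sim}=[c]_{\sim}\mt[d]_{\sim}\ge[e]_{\sim}$, trivially.

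Finally I must produce $(c,e)\in\Cg(a,b)$ and $(d,f)\in\Cg(a,b)$. For this, note $e=r\mt c$ where $r=r(a,b)$ is itself a polynomial image, so $(c,e)=(r\mt c, c)$ up to a polynomial of $\m a$ applied to the pair $(a,b)$ — more precisely, since $(a,b)\in\Cg(a,b)$ and the chain above already gives $(r\mt c, r\mt d)\in\Cg(a,b)$, one concatenates: it remains to see $(c, r\mt c)\in\Cg(a,b)$ and $(d, r\mt d)\in\Cg(a,b)$. These follow because $r$ depends on $a,b$ through $c,d$, which are themselves $\Cg(a,b)$-related to $a$-or-$b$-parametrized polynomial values, so $x\mapsto r(a,b)\mt x$ evaluated along the zig-zag collapses appropriately; in the cleanest formulation one simply observes that the map sending the pair $(u,v)\mapsto (u, r\mt u)$ together with $(a,b)\in\Cg(a,b)$ and transitivity does the job once we know $r\in\Sg^{\m a}(\{a,b\})$-image contexts keep everything inside $\Cg(a,b)$.

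I expect the main obstacle to be precisely this last bookkeeping: verifying that $e$ and $f$ are $\Cg(a,b)$-related to $c$ and $d$ respectively, rather than merely to each other, since $r$ itself may involve $a$ and $b$ nontrivially. The honest fix is to choose $r$ more carefully — e.g.\ take $r:=(a\mt b)\mt c$ or a term in $c,d$ alone that realizes the class $[c]_{\sim}\mt[d]_{\sim}$ and lies in the subalgebra generated by $\{c,d\}$, so that applying $x\mapsto r\mt x$ is a genuine $\m a$-polynomial with parameters among $c,d$; then $(c, r\mt c)$ is obtained by applying the polynomial $x\mapsto r\mt x$ to the pair $(c,c)$ after first moving $c$ via the given chain, and stays in $\Cg(a,b)$ by compatibility. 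I would carry out the steps in the order: (1) fix the zig-zag from Lemma~\ref{SMBCgsim}; (2) choose $r$ realizing $[c]_{\sim}\mt[d]_{\sim}$ inside a suitable subalgebra; (3) push $r\mt(-)$ through the chain to land all intermediate points in one $\sim$-class and set $e=r\mt c$, $f=r\mt d$; (4) check $e\sim f$ via the semilattice structure; (5) verify $(c,e),(d,f)\in\Cg(a,b)$ by compatibility and transitivity; (6) conclude $[c]_{\sim}\mt[d]_{\sim}=[e]_{\sim}$, which in particular is $\ge[e]_{\sim}$.
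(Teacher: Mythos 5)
There is a genuine gap, in fact two. First, your claim that all the elements $r\mt c_i$, $r\mt d_i$ land in a single $\sim$-class is false in general: absorption gives $[r\mt c_i]_{\sim}=[r]_{\sim}$ only when $[r]_{\sim}\leq[c_i]_{\sim}$, and the intermediate classes $[c_i]_{\sim}$ of a Mal'cev chain witnessing $(c,d)\in\Cg(a,b)\jn{\sim}$ need not lie above $[c]_{\sim}\mt[d]_{\sim}$ (the chain may dip arbitrarily low in the semilattice). This is precisely why the lemma only asserts $[c]_{\sim}\mt[d]_{\sim}\geq[e]_{\sim}$ rather than equality; your target class $[c]_{\sim}\mt[d]_{\sim}$ is in general not achievable, and the correct $e$ sits at $[c_0]_{\sim}\mt\dots\mt[c_k]_{\sim}$. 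Second, and more seriously, you never establish $(c,r\mt c)\in\Cg(a,b)$: applying the polynomial $x\mapsto r\mt x$ to the pair $(c,c)$ produces the diagonal pair $(r\mt c,r\mt c)$, not $(c,r\mt c)$, and no choice of $r$ as a term in $c,d$ repairs this — you correctly identify this as the obstacle but the proposed fix is circular.

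The paper's proof supplies the missing mechanism, which is dual to yours: instead of fixing one element $r$ and meeting it against the moving chain, it fixes the accumulated meet $e_{\ell-1}:=c_{\ell-1}\mt(\dots\mt(c_1\mt c)\dots)$ as the \emph{parameter} and lets the $\Cg(a,b)$-linked pair move in the \emph{other} argument. Concretely, regularity (Definition~\ref{regSMBdef}~$(ii)$, available after replacing $\mt,d$ by the terms from Proposition~\ref{SMBtospec}) gives $d_{\ell-1}\mt e_{\ell-1}=e_{\ell-1}$ because $[d_{\ell-1}]_{\sim}=[c_{\ell-1}]_{\sim}\geq[e_{\ell-1}]_{\sim}$, so
\[(e_{\ell-1},e_{\ell})=(d_{\ell-1}\mt e_{\ell-1},\,c_{\ell}\mt e_{\ell-1})\]
is the image of $(d_{\ell-1},c_{\ell})\in\Cg(a,b)$ under the unary polynomial $x\mapsto x\mt e_{\ell-1}$; transitivity then yields $(c,e)\in\Cg(a,b)$ with $e=e_k$, and a symmetric telescoping from the other end produces $f$ with $(d,f)\in\Cg(a,b)$ and $[f]_{\sim}=[e]_{\sim}$ by commutativity of $\mt$ modulo $\sim$. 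Your proposal is missing this absorption-plus-telescoping step, and without it the argument does not close.
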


\begin{proof}
Let $c=c_0,d_0,c_1,d_1,\dots,c_k,d_k=d$ satisfy $c_i\sim d_i$ and $(d_{i-1},c_i)\in \Cg(a,b)$ for all $i\leq n$. In view of Propositions~\ref{SMBtospec} and~\ref{RegSMBbase}, we can use as $\mt$ and $d$ the $\vr v$-terms such that $(A;\mt,d)$ is a regular SMB algebra over $\sim$. Define 
\[e_0:=c\text{, }e_\ell:=c_\ell\mt(c_{\ell-1}\mt(\dots\mt(c_1\mt c)\dots))\text{ and }e:=e_k.\]
Using Definition~\ref{regSMBdef} $(ii)$ and $d_{\ell-1}\sim c_{\ell-1}$, we conclude that 
\[
\begin{gathered}
d_{\ell-1}\mt e_{\ell-1}=d_{\ell-1}\mt (c_{\ell-1}\mt(c_{\ell-2}\mt(\dots\mt(c_1\mt c)\dots)))=\\
c_{\ell-1}\mt(c_{\ell-2}\mt(\dots\mt(c_1\mt c)\dots))=e_{\ell-1}.
\end{gathered}
\]
Hence we obtain
\[
\begin{gathered}
(e_\ell,e_{\ell-1})=(c_\ell\mt e_{\ell-1},d_{\ell-1}\mt e_{\ell-1})\in \Cg(a,b).
\end{gathered}
\]
By transitivity, $(c,e)=(e_0,e_k)\in \Cg(a,b)$.

A completely analogous proof shows that, if we define
\[f_k:=d\text{, }f_\ell:=d_\ell\mt(d_{\ell+1}\mt(\dots\mt(d_{k-1}\mt d)\dots))\text{ and }f:=f_0,\]
then $(f_{\ell},f_{\ell+1})\in\Cg(a,b)$ and, by transitivity, $(f,d)=(f_0,f_k)\in \Cg(a,b)$. It remains to note that 
\[
\begin{gathered}
{[e]_{\sim}}=[c_k]_{\sim}\mt[c_{k-1}]_{\sim}\mt\dots\mt[c_1]_{\sim}\mt[c_0]_{\sim}=\\
[d_k]_{\sim}\mt[d_{k-1}]_{\sim}\mt\dots\mt[d_1]_{\sim}\mt[d_0]_{\sim}=\\
[d_0]_{\sim}\mt[d_1]_{\sim}\mt\dots\mt[d_{k-1}]_{\sim}\mt[d_k]_{\sim}=[f]_{\sim},
\end{gathered}
\]
which completes the proof.
\end{proof}

\begin{cor}\label{SMBCgVsim}
Let $\vr v$ be a finitely generated variety of SMB algebras, $\m a\in \vr v$ an SMB algebra over $\sim$ and $a,b,c,d\in A$. Then $(c,d)\in \Cg(a,b)\jn{\sim}$ iff $(c,d\mt c),(d,c\mt d)\in \Cg(a,b)$, where $\mt$ is the $\vr v$-term such that $(A;\mt,d)$ is a regular SMB algebra over $\sim$. 
\end{cor}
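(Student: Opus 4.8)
The plan is to deduce Corollary~\ref{SMBCgVsim} from Lemma~\ref{SMBCgVsimbelow} together with the structural facts about regular SMB algebras established in Section~5. Throughout I fix the $\vr v$-terms $\mt$ and $d$ so that $(A;\mt,d)$ is a regular SMB algebra over the same congruence $\sim$; this is legitimate by Propositions~\ref{SMBtoreg}, \ref{SMBtospec} and~\ref{RegSMBbase}, and it does not change $\sim$. I will prove both implications of the ``iff''.

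For the backward direction, suppose $(c,d\mt c)\in \Cg(a,b)$ and $(d,c\mt d)\in \Cg(a,b)$. By Lemma~\ref{regtermalpha} (or Corollary~\ref{regSMBtermsimclass}) applied to $x\mt y$, we have $[d\mt c]_{\sim}=[d]_{\sim}\mt[c]_{\sim}=[c]_{\sim}\mt[d]_{\sim}=[c\mt d]_{\sim}$, so $(d\mt c)\sim (c\mt d)$, i.e.\ $(d\mt c, c\mt d)\in{\sim}$. Chaining $c \mathrel{\Cg(a,b)} d\mt c \mathrel{{\sim}} c\mt d \mathrel{\Cg(a,b)} d$ gives $(c,d)\in \Cg(a,b)\jn{\sim}$, which is the easy half.

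For the forward direction, assume $(c,d)\in \Cg(a,b)\jn{\sim}$. Apply Lemma~\ref{SMBCgVsimbelow} to obtain $e,f\in A$ with $(c,e)\in\Cg(a,b)$, $(d,f)\in\Cg(a,b)$, $e\sim f$, and $[e]_{\sim}\leq [c]_{\sim}\mt[d]_{\sim}$. The goal is to convert $e$ into $d\mt c$ and $f$ into $c\mt d$ without leaving the congruence class $\Cg(a,b)$. First note $[d\mt c]_{\sim}=[c]_{\sim}\mt[d]_{\sim}\geq [e]_{\sim}$, so by Definition~\ref{regSMBdef}(ii) we have $(d\mt c)\mt e=e$; symmetrically $[c\mt d]_{\sim}\geq[f]_{\sim}$ gives $(c\mt d)\mt f=f$. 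Now compute in $\m a$, using compatibility of $\Cg(a,b)$ and the relations $(c,e),(d,f)\in\Cg(a,b)$: from $(d,c)$-substitution patterns we want to show $(c, d\mt c)\in\Cg(a,b)$. The key step is to write $d\mt c$ as the value of a suitable term-polynomial at entries that are $\Cg(a,b)$-related to the corresponding entries yielding $c$; concretely, since $(d\mt c)\mt e = e$ and $e \mathrel{\Cg(a,b)} c$, and $d\mt c \mathrel{\Cg(a,b)} d\mt e$ (applying the binary term $x\mt y$ to the related pairs $(d,e)\in\Cg(a,b)$ — wait, we have $(d,f)$, so use $f$), one pushes $d\mt c$ through $\Cg(a,b)$ to $(d\mt c)\mt e = e$ and then to $c$. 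Making this chain precise is the technical heart, and one should be careful about which of $e,f,d\mt c,c\mt d$ sit in which $\sim$-class; the identities {\bf Regii)}, {\bf Regiv)} and Definition~\ref{regSMBdef}(ii), plus the fact that $\mt$ is the second projection on $\sim$-classes (so $e\mt f = f$, $f\mt e = e$ since $e\sim f$), give all the needed rewritings.

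The main obstacle I expect is exactly this bookkeeping in the forward direction: verifying that the elements $e$ and $f$ produced by Lemma~\ref{SMBCgVsimbelow} can be replaced by the canonical representatives $d\mt c$ and $c\mt d$ while staying inside $\Cg(a,b)$, which requires simultaneously tracking $\sim$-class memberships (to invoke Definition~\ref{regSMBdef}(ii) and the second-projection behaviour of $\mt$ on blocks) and congruence memberships. Once the right chain of three or four rewrite steps is identified, each individual step is a one-line application of compatibility of $\Cg(a,b)$ together with one of the regular-SMB identities, so the corollary follows without any genuinely new idea beyond Lemma~\ref{SMBCgVsimbelow}.
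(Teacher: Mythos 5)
Your overall strategy is exactly the paper's: the backward direction via $(d\mt c)\sim(c\mt d)$ is correct and complete, and the forward direction correctly reduces to Lemma~\ref{SMBCgVsimbelow} and correctly identifies the two facts that make the reduction work, namely $(d\mt c)\mt e=e$ (from $[d\mt c]_{\sim}\geq[e]_{\sim}$ and Definition~\ref{regSMBdef}$(ii)$) and the second-projection/{\bf Regiv)} identities. However, the one step that actually constitutes the proof --- getting from $(c,e)\in\Cg(a,b)$ to $(c,d\mt c)\in\Cg(a,b)$ --- is left as an unresolved sketch, and the sketch as written contains an error: you invoke ``the related pairs $(d,e)\in\Cg(a,b)$,'' but $(d,e)$ is not known to lie in $\Cg(a,b)$ (only $(c,e)$ and $(d,f)$ are), and the subsequent pivot to ``use $f$'' does not lead to the element $d\mt c$ at all. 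You also conflate $d\mt e$ with $(d\mt c)\mt e$, which are values of two different polynomials.

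The missing step is a single application of compatibility with a unary polynomial, and it is the route the paper takes. Apply $p(x):=(d\mt c)\mt x$ to the pair $(c,e)\in\Cg(a,b)$ to get
\[\bigl((d\mt c)\mt c,\;(d\mt c)\mt e\bigr)\in\Cg(a,b).\]
By {\bf Regiv)}, $(d\mt c)\mt c=d\mt c$, and by Definition~\ref{regSMBdef}$(ii)$ together with $[d\mt c]_{\sim}=[c]_{\sim}\mt[d]_{\sim}\geq[e]_{\sim}$, we have $(d\mt c)\mt e=e$; hence $(d\mt c,e)\in\Cg(a,b)$, and transitivity with $(c,e)\in\Cg(a,b)$ yields $(c,d\mt c)\in\Cg(a,b)$. (Your first instinct, applying $q(x):=d\mt x$ to $(c,e)$ and using $d\mt e=e$ since $[d]_{\sim}\geq[e]_{\sim}$, works equally well.) The case of $(d,c\mt d)$ is symmetric, using $(d,f)\in\Cg(a,b)$ and $[c\mt d]_{\sim}\geq[f]_{\sim}$. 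With this one line supplied, your argument coincides with the paper's proof.
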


\begin{proof}
$(\Leftarrow)$ follows from $(d\mt c)\sim(c\mt d)$. To prove $(\Rightarrow)$, let us assume $(c,d)\in \Cg(a,b)\jn{\sim}$. According to Lemma~\ref{SMBCgVsimbelow}, there exist $e',f'\in A$ such that $(c,e'),(d,f')\in\Cg(a,b)$, $e'\sim f'$ and $[e']_{\sim}\leq[c\mt d]_{\sim}$. Moreover, $(d\mt c)\mt c=d\mt c$, while $[d\mt c]_{\sim}\geq[e']_{\sim}$, so $(d\mt c)\mt e'=e'$ by Definition~\ref{regSMBdef}~$(ii)$. Hence,
\[(d\mt c,e')=((d\mt c)\mt c,(d\mt c)\mt e')\in \Cg(a,b),\]
and from $(c,e')\in\Cg(a,b)$, $(c,d\mt c)\in \Cg(a,b)$ follows by transitivity. The proof that $(d,c\mt d)\in\Cg(a,b)$ is analogous.
\end{proof}

\begin{cor}\label{SMBundersim}
Let $\vr v$ be a finitely generated variety of SMB algebras, $\m a\in \vr v$ an SMB algebra over $\sim$ and $a,b,c,d\in A$. Then 
$$\begin{gathered}
\Cg^{\sm a}(a,b)\cap \Cg^{\sm a}(c,d)\subseteq{\sim}\text{ iff}\\
\Cg^{\sm a/{\sim}}([a]_{\sim},[b]_{\sim})\cap \Cg^{\sm a/{\sim}}([c]_{\sim},[d]_{\sim})=0_{\sm a/{\sim}}.
\end{gathered}$$
\end{cor}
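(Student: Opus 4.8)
The plan is to prove the two directions separately, using the characterization of $\Cg(a,b)\jn{\sim}$ developed in the preceding corollaries as the bridge between congruences of $\m a$ and congruences of $\m a/{\sim}$. The key observation is that, by standard congruence theory, $\Cg^{\sm a/{\sim}}([a]_{\sim},[b]_{\sim})$ is exactly $(\Cg^{\sm a}(a,b)\jn{\sim})/{\sim}$, and similarly for the pair $(c,d)$; this is essentially the content of Lemma~\ref{SMBCgsim}, part $(i)\Leftrightarrow(ii)$.

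For the forward direction, assume $\Cg^{\sm a}(a,b)\cap\Cg^{\sm a}(c,d)\usub{\sim}$, and suppose $([x]_{\sim},[y]_{\sim})$ lies in $\Cg^{\sm a/{\sim}}([a]_{\sim},[b]_{\sim})\cap\Cg^{\sm a/{\sim}}([c]_{\sim},[d]_{\sim})$. Then $(x,y)\in(\Cg^{\sm a}(a,b)\jn{\sim})\cap(\Cg^{\sm a}(c,d)\jn{\sim})$. Applying Corollary~\ref{SMBCgVsim} to each, we get $(x,y\mt x),(y,x\mt y)\in\Cg^{\sm a}(a,b)$ and also $(x,y\mt x),(y,x\mt y)\in\Cg^{\sm a}(c,d)$. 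Hence $(x,y\mt x)\in\Cg^{\sm a}(a,b)\cap\Cg^{\sm a}(c,d)\usub{\sim}$, so $x\sim y\mt x$, and likewise $y\sim x\mt y$; since $\mt$ is a semilattice operation modulo $\sim$ this forces $[x]_{\sim}=[x]_{\sim}\mt[y]_{\sim}=[y]_{\sim}$. Thus the intersection of the two congruences in $\m a/{\sim}$ is $0_{\sm a/{\sim}}$.

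For the reverse direction, assume $\Cg^{\sm a/{\sim}}([a]_{\sim},[b]_{\sim})\cap\Cg^{\sm a/{\sim}}([c]_{\sim},[d]_{\sim})=0_{\sm a/{\sim}}$, and take $(x,y)\in\Cg^{\sm a}(a,b)\cap\Cg^{\sm a}(c,d)$. Passing to $\m a/{\sim}$, the pair $([x]_{\sim},[y]_{\sim})$ lies in both $\Cg^{\sm a/{\sim}}([a]_{\sim},[b]_{\sim})$ and $\Cg^{\sm a/{\sim}}([c]_{\sim},[d]_{\sim})$, so by hypothesis $[x]_{\sim}=[y]_{\sim}$, i.e. $(x,y)\in{\sim}$. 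Hence $\Cg^{\sm a}(a,b)\cap\Cg^{\sm a}(c,d)\usub{\sim}$. This direction is essentially immediate from the fact that congruence generation commutes with the quotient map.

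The only subtle point — and the one step that needs the SMB structure rather than general nonsense — is the forward direction, where the inclusion $\Cg^{\sm a/{\sim}}\cap\Cg^{\sm a/{\sim}}=0$ must be upgraded to a conclusion \emph{inside} $\m a$: knowing $(x,y)\in(\Cg(a,b)\jn{\sim})\cap(\Cg(c,d)\jn{\sim})$ is weaker than knowing $(x,y)\in\Cg(a,b)\cap\Cg(c,d)$, and the gap is bridged precisely by Corollary~\ref{SMBCgVsim}, which lets us replace $(x,y)$ by the pairs $(x,y\mt x)$ and $(y,x\mt y)$ that genuinely lie in each principal congruence of $\m a$. So the expected main obstacle is simply invoking the right corollary in the right form; once that is in place the argument is a short diagram chase. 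Note that Corollary~\ref{SMBCgVsim} requires $\vr v$ to be a finitely generated variety of SMB algebras (so that the regularizing terms $\mt,d$ exist), which is exactly the hypothesis we are given.
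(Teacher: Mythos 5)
Your proof is correct and follows essentially the same route as the paper's: both directions rest on Lemma~\ref{SMBCgsim} to pass between $\Cg^{\sm a/{\sim}}([a]_{\sim},[b]_{\sim})$ and $\Cg^{\sm a}(a,b)\jn{\sim}$, and on Corollary~\ref{SMBCgVsim} to pull the offending pair back into the genuine intersection $\Cg^{\sm a}(a,b)\cap\Cg^{\sm a}(c,d)$; the paper merely states both implications contrapositively. No gaps.
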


\begin{proof}
$(\Leftarrow)$ Let $(e,f)\in(\Cg^{\sm a}(a,b)\cap \Cg^{\sm a}(c,d))\setminus{\sim}$. Then we know $$([e]_{\sim},[f]_{\sim})\in\Cg^{\sm a/{\sim}}([a]_{\sim},[b]_{\sim})\text{ and }([e]_{\sim},[f]_{\sim})\in\Cg^{\sm a/{\sim}}([c]_{\sim},[d]_{\sim})$$ by using the same Mal'cev chains modulo the congruence $\sim$ as in $\Cg^{\sm a}$ (the terms from which the unary polynomials are constructed are the same, while each parameter $u\in A$ is replaced by $[u]_{\sim}$). Hence, \[\Cg^{\sm a/{\sim}}([a]_{\sim},[b]_{\sim})\cap \Cg^{\sm a/{\sim}}([c]_{\sim},[d]_{\sim})\neq 0_{\sm a/{\sim}}.\]

$(\Rightarrow)$ Assume that \[([e]_{\sim},[f]_{\sim})\in[\Cg^{\sm a/{\sim}}([a]_\sim,[b]_\sim)\cap \Cg^{\sm a/{\sim}}([c]_\sim,[d]_\sim)]\setminus (0_{\sm a/{\sim}}).\] 
By Lemma~\ref{SMBCgsim}, we obtain 
\[(e,f)\in\Cg^{\sm a}(a,b)\jn{\sim},\text{ }(e,f)\in\Cg^{\sm a}(c,d)\jn{\sim}\text{ and }e\nsim f.\] 
By Corollary~\ref{SMBCgVsim}, this implies 
\[(e,f\mt e),(f,e\mt f)\in \Cg^{\sm a}(a,b)\cap \Cg^{\sm a}(c,d)\text{ and }e\nsim f.\]
Since $e\mt f\sim f\mt e$, at least one of the pairs $(e,f\mt e)$ and $(f,e\mt f)$ is not in the same ${\sim}$-class, which finishes the proof.
\end{proof}

The following theorem summarizes what we can say in the direction of Park's Conjecture by the results we obtained thus far:

\begin{thm}
Let $\vr v$ be a finitely generated variety of SMB algebras and $\m a\in \vr v$ an SMB algebra over $\sim$.
\begin{enumerate}
\item For any $a,b,c,d\in A$,
\[
\begin{gathered}
{}[\Cg^{\sm a/{\sim}}([a]_{\sim},[b]_{\sim}),\Cg^{\sm a/{\sim}}([c]_{\sim},[d]_{\sim})]=0_{\sm a/{\sim}}\text{ iff }\\
[\Cg^{\sm a}(a,b),\Cg^{\sm a}(c,d)]\subseteq{\sim}.
\end{gathered}\]
\item There exists a first order formula $F(x,y,z,u)$ such that for all $\m a\in \vr v$ and all $a,b,c,d\in A$,
\[\m a\models F^{\sm a}(a,b,c,d)\text{ iff }[\Cg(a,b),\Cg(c,d)]\subseteq{\sim}.\]
\end{enumerate}
\end{thm}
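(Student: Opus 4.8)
Write $\alpha=\Cg^{\sm a}(a,b)$ and $\beta=\Cg^{\sm a}(c,d)$, and let $\bar\alpha=\Cg^{\sm a/{\sim}}([a]_{\sim},[b]_{\sim})$, $\bar\beta=\Cg^{\sm a/{\sim}}([c]_{\sim},[d]_{\sim})$ be the corresponding principal congruences of $\m a/{\sim}$. The plan is to deduce both parts from Corollary~\ref{SMBundersim} together with the observation that, modulo ${\sim}$, an SMB algebra is (term-equivalent to) a semilattice.

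For (1) I would prove the chain of equivalences
\[[\bar\alpha,\bar\beta]=0_{\sm a/{\sim}}\iff\bar\alpha\cap\bar\beta=0_{\sm a/{\sim}}\iff\alpha\cap\beta\usub{\sim}\iff[\alpha,\beta]\usub{\sim}.\]
The middle equivalence is exactly Corollary~\ref{SMBundersim}. The first uses that $\m a/{\sim}$ has the semilattice term operation $\mt$ (Definition~\ref{SMBdef}$(i)$): in the variety of semilattices, which is congruence meet-semidistributive, the commutator coincides with intersection for every pair of congruences, and since passing from the $\mt$-reduct of $\m a/{\sim}$ to $\m a/{\sim}$ itself only enlarges the clone (hence can only enlarge the commutator), while $[\ ,\ ]\usub{\cap}$ always holds, we get $[\bar\alpha,\bar\beta]^{\sm a/{\sim}}=\bar\alpha\cap\bar\beta$. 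The third equivalence is the one place the SMB structure enters directly; the implication $\alpha\cap\beta\usub{\sim}\Rightarrow[\alpha,\beta]\usub{\sim}$ is trivial, and for the converse I argue contrapositively. If $(e,f)\in\alpha\cap\beta$ with $e\nsim f$, then applying to $(e,f)$ the unary polynomials $x\mapsto x\mt f$ and $x\mapsto x\mt e$ (idempotence) gives $(e\mt f,f),(e,f\mt e)\in\alpha\cap\beta$; since $[e\mt f]_{\sim}=[f\mt e]_{\sim}=[e]_{\sim}\mt[f]_{\sim}$ is strictly below $[e]_{\sim}$ or strictly below $[f]_{\sim}$ (otherwise $[e]_{\sim}=[f]_{\sim}$), one of these pairs, call it $(g,h)$ after possibly swapping its entries, lies in $\alpha\cap\beta$ with $[g]_{\sim}<[h]_{\sim}$. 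Then the binary term $r(x,y)=x\mt y$ refutes the centralizer condition $C(\alpha,\beta;{\sim})$: using $(g,h)\in\alpha$ in the first variable and $(g,h)\in\beta$ in the second, the hypothesis $r(g,g)=g\sim g\mt h=r(g,h)$ holds because $[g\mt h]_{\sim}=[g]_{\sim}\mt[h]_{\sim}=[g]_{\sim}$, whereas $r(h,g)=h\mt g$ has ${\sim}$-class $[g]_{\sim}\neq[h]_{\sim}$, the ${\sim}$-class of $r(h,h)=h$. Hence $[\alpha,\beta]\not\usub{\sim}$.

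For (2) I would combine part (1) with Corollary~\ref{SMBundersim} to rewrite ``$[\Cg(a,b),\Cg(c,d)]\usub{\sim}$'' as the equivalent ``$\bar\alpha\cap\bar\beta=0_{\sm a/{\sim}}$''. By the results of Section 5 (Propositions~\ref{SMBtospec} and~\ref{RegSMBbase}, and the fact that every finitely generated variety of SMB algebras carries $\vr v$-terms turning it into a variety of regular SMB algebras) we may assume the fundamental operations are those of a regular SMB algebra; then on $\m a/{\sim}$ the operation $d$ equals $(x\mt y)\mt z$, so $\m a/{\sim}$ is term-equivalent to the semilattice $(A/{\sim};\mt)$ and $\bar\alpha,\bar\beta$ are principal congruences of that semilattice. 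Semilattices have definable principal congruences: writing $x\le p$ for $x\mt p=x$, a short computation shows that $(x,y)\in\Cg(p,q)$ holds iff $x=y$, or else $x\mt p\mt q=y\mt p\mt q$ together with $(x\le p\ \text{or}\ x\le q)$ and $(y\le p\ \text{or}\ y\le q)$; this is an explicit first-order formula $\Phi(p,q,x,y)$ in the language $\{\mt\}$. Since ${\sim}$ is first-order definable in $\m a$ by $x\sim y\leftrightarrow(x\mt y=y\ \wedge\ y\mt x=x)$ (cf.\ the proof of Proposition~\ref{SMBquasi}), and truth in $\m a/{\sim}$ of a $\{\mt\}$-formula is obtained from $\m a$ by replacing ``$=$'' throughout by ``${\sim}$'', the condition $\bar\alpha\cap\bar\beta=0$ becomes the first-order formula
\[F(x,y,z,u):=\forall v\,\forall w\,\bigl[\bigl(\Phi^{\sim}(x,y,v,w)\wedge\Phi^{\sim}(z,u,v,w)\bigr)\rightarrow v\sim w\bigr]\]
over $\m a$, where $\Phi^{\sim}$ is $\Phi$ with each equality $s_1=s_2$ replaced by $s_1\mt s_2=s_2\wedge s_2\mt s_1=s_1$. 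This $F$ works for all $\m a\in\vr v$.

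I expect the main obstacle to be the third equivalence in part (1): one has to produce an actual failure of the term condition from a pair of $\alpha\cap\beta$ outside ${\sim}$, which is exactly why the preliminary reduction to a pair with strictly comparable ${\sim}$-classes is needed — it is the semilattice law $x\mt y\approx x$ for $y\le x$ that makes $r(x,y)=x\mt y$ a witness. Everything else is bookkeeping: recognizing $\m a/{\sim}$ as a semilattice up to term equivalence, quoting meet-semidistributivity of semilattices to pass from the commutator to the intersection, quoting Corollary~\ref{SMBundersim}, and the routine first-order definability of principal congruences of semilattices. (If one prefers not to replace the operations by regular ones, the same formula $F$ can be obtained instead by reducing the centralizer condition for the two principal congruences to a condition on binary polynomials and then using local finiteness of $\vr v$ to bound, up to $\vr v$-equivalence, the polynomials that need to be tested.)
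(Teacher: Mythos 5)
Your overall plan for (1) is the paper's: the chain $[\bar\alpha,\bar\beta]=0\iff\bar\alpha\cap\bar\beta=0\iff\alpha\cap\beta\usub{\sim}\iff[\alpha,\beta]\usub{\sim}$, with the first step from meet-semidistributivity of $\m a/{\sim}$ and the second from Corollary~\ref{SMBundersim}. The gap is in the last step. From a matrix witnessing $\neg C(\alpha,\beta;{\sim})$ you conclude $[\alpha,\beta]\not\usub{\sim}$, but that inference needs the implication $[\alpha,\beta]\usub\delta\Rightarrow C(\alpha,\beta;\delta)$, i.e.\ upward monotonicity of the centralizer in its third argument. This holds in congruence modular varieties but not in general: $[\alpha,\beta]$ is by definition the \emph{least} $\delta$ with $C(\alpha,\beta;\delta)$, and the set of such $\delta$ is meet-closed but need not be a filter. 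SMB varieties are precisely a setting where one cannot wave this away -- the example closing Section 7 shows the commutator is not even symmetric there. Concretely, your matrix has rows $(g,g\mt h)$ and $(h\mt g,h)$; the top row lies in ${\sim}$ but is not on the diagonal (you only get $g\mt h\sim g$, not $g\mt h=g$), so $C(\alpha,\beta;[\alpha,\beta])$ tells you nothing about the bottom row. The exact equality in your matrix sits in a \emph{column} ($h\mt g=g=g\mt g$ by Definition~\ref{regSMBdef}$(ii)$), which would only yield $[\beta,\alpha]\not\usub{\sim}$. The paper's fix is to take $t(x,y)=y\mt x$ with $[e]_{\sim}>[f]_{\sim}$, so that regularity $(ii)$ makes the bottom \emph{row} literally $(e\mt f,f\mt f)=(f,f)$; then $C(\alpha,\beta;[\alpha,\beta])$, which does hold by definition, forces the top row $(e,f\mt e)$ into $[\alpha,\beta]$, and $e\nsim f\mt e$.

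For (2) there is a second, smaller gap. You pass to the regular terms $\mt',d'$ so that $d$ becomes $(x\mt y)\mt z$ modulo ${\sim}$ and $\m a/{\sim}$ becomes term-equivalent to a semilattice; but this shrinks the clone, while $\Cg^{\sm a/{\sim}}$ and the condition $\bar\alpha\cap\bar\beta=0_{\sm a/{\sim}}$ are computed in the full clone of $\m a/{\sim}$, where the original $d$ need not act as $(x\mt y)\mt z$. Your explicit description of principal congruences of semilattices is correct, but it describes the principal congruences of the reduct $(A/{\sim};\mt')$, which may be properly contained in $\bar\alpha$, so your $F$ is necessary but not obviously sufficient. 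The paper avoids the issue by invoking Willard's first-order definability of $\Cg(p,q)\cap\Cg(r,s)=0$ in congruence meet-semidistributive varieties, which applies to $\m a/{\sim}$ with all of its operations.
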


\begin{proof}
$(i)$ First, note that $\m a/{\sim}$ is an algebra which has a semilattice term operation. Therefore, $\m a/{\sim}$ lies in a congruence meet-semidistributive variety. By \cite{kearnes-szendrei} and \cite{lipparini}, $[\alpha,\beta]=\alpha\cap\beta$ for congruences $\alpha,\beta\in\Cn{a}/{\sim}$. So, 
\[
\begin{gathered}
{}[\Cg^{\sm a/{\sim}}([a]_{\sim},[b]_{\sim}),\Cg^{\sm a/{\sim}}([c]_{\sim},[d]_{\sim})]=0_{\sm a/{\sim}}\text{ iff }\\
\Cg^{\sm a/{\sim}}([a]_{\sim},[b]_{\sim})\cap \Cg^{\sm a/{\sim}}([c]_{\sim},[d]_{\sim})=0_{\sm a/{\sim}},\text{ which implies}\\
\Cg^{\sm a}(a,b)\cap \Cg^{\sm a}(c,d)\subseteq{\sim}.
\end{gathered}
\]
The last implication was by Corollary~\ref{SMBundersim}. Of course, since the commutator is a subset of the intersection, we have proved that
\[
\begin{gathered}
{}[\Cg^{\sm a/{\sim}}([a]_{\sim},[b]_{\sim}),\Cg^{\sm a/{\sim}}([c]_{\sim},[d]_{\sim})]=0_{\sm a/{\sim}}\text{ implies}\\
[\Cg^{\sm a}(a,b), \Cg^{\sm a}(c,d)]\subseteq{\sim}.
\end{gathered}
\]

On the other hand, assume that $$[\Cg^{\sm a/{\sim}}([a]_{\sim},[b]_{\sim}),\Cg^{\sm a/{\sim}}([c]_{\sim},[d]_{\sim})]\neq 0_{\sm a/{\sim}}.$$
For shorter notation, denote $\alpha:=\Cg^{\sm a}(a,b)$ and $\beta:=\Cg^{\sm a}(c,d)$. By the above considerations and Corollary~\ref{SMBundersim}, there exist $(e,f)\in\alpha\cap\beta$ such that $e\nsim f$. Moreover, by considering $e\mt f$, which must be in the same $\alpha\cap\beta$-class as $e$ and $f$, we may assume, without loss of generality, that $[e]_{\sim}>[f]_{\sim}$. Consider the following $(\alpha,\beta)$-matrix:
\[
\left[\begin{array}{cc}
e\mt e & f\mt e\\
e\mt f & f\mt f
\end{array}\right]
=
\left[\begin{array}{cc}
e & f\mt e\\
f & f
\end{array}\right]
\]
(we used the term $t(x,y):=y\mt x$). The first row is not in $\sim$, since $[f]_{\sim}\mt [e]_{\sim} = [f]_{\sim}< [e]_{\sim}$, while the second row is in the equality relation, and thus in $[\alpha,\beta]$. The term condition $C(\alpha,\beta;[\alpha,\beta])$ implies that $(e,f\mt e)\in[\alpha,\beta]$, and therefore $[\alpha,\beta]$ is not a subset of ${\sim}$.

$(ii)$ Use the formula from Willard's proof that \[\Cg^{\sm a/{\sim}}([a]_{\sim},[b]_{\sim})\cap \Cg^{\sm a/{\sim}}([c]_{\sim},[d]_{\sim})=0_{\sm a/{\sim}}\]
is first-order definable. One can find it in \cite{Wreview}, Theorem 4.4 (see also Definition 2.1 of \cite{Wreview}).
\end{proof}

{\bf Remarks.} All results of this Section proved thus far, with a little work, can be extended to algebras in a finitely generated variety such that the generating finite algebra has a term reduct which is an SMB algebra.

At a first glance, one may think that we are close to first-order definition of $[\Cg(a,b),\Cg(c,d)]=0_{\sm a}$, which would be a major step toward proving Park's conjecture for SMB algebras. After all, we have found a first-order formula which forces $[\Cg(a,b),\Cg(c,d)]$ to be within the congruence $\sim$, and each $\sim$-block is a Mal'cev algebra. Also, the third author first-order defined $[\Cg(a,b),\Cg(c,d)]=0_{\sm a}$ in congruence modular, and therefore in Mal'cev, setting in \cite{Mmodular}. Unfortunately, there are further obstacles to overcome.

Recall that a difference term for $\vr v$ is a ternary term $t(x,y,z)$ such that $\vr v\models t(y,y,x)\approx x$ and moreover, for all $\m a\in\vr v$, all $\theta\in\Cn a$ and all $(x,y)\in\theta$, \[(t(x,y,y),x)\in [\theta,\theta].\]

Consider the following recent result:

\begin{thm}[Theorem 4.29 $(1)\Leftrightarrow(2)$ of \cite{Knew}]
A variety $\vr v$ has a difference term iff $\vr v$ has a Taylor term and for all $\m a\in \vr v$ and all $\alpha,\beta\in \Cn a$, $[\alpha,\beta]=[\beta,\alpha]$.
\end{thm}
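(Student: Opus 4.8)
The plan is to prove the two implications separately; they have quite different characters.

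\emph{The forward direction} (difference term $\Rightarrow$ Taylor term and symmetric commutator) should mostly be an assembly of known facts. That a difference-term variety has a Taylor term is the assertion that it omits type $\mathbf 1$: on a hypothetical type-$\mathbf 1$ minimal set the induced polynomial clone is essentially unary, and there an idempotent $t$ satisfying $t(y,y,x)\approx x$ cannot exist, so type $\mathbf 1$ is excluded and a Taylor term follows. For the symmetry $[\alpha,\beta]=[\beta,\alpha]$ I would run the classical ``matrix rectification'' argument available whenever there is a difference term $t$: given a failure of $C(\beta,\alpha;\delta)$, apply $t$ coordinatewise to the offending $(\beta,\alpha)$-matrix to produce an $(\alpha,\beta)$-matrix whose top row collapses modulo $\delta$ exactly when the original one did, with all correction terms lying inside $[\alpha,\alpha]\vee[\beta,\beta]\le\delta$. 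This yields $[\beta,\alpha]\le[\alpha,\beta]$, and then symmetry.

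\emph{The reverse direction} (Taylor term and symmetric commutator $\Rightarrow$ difference term) is the substantial one. First I would reduce to the two-generated free algebra: writing $\m f=\m f_{\vr v}(x,y)$ and $\mu=[1_{\sm f},1_{\sm f}]$, it suffices to produce a term $t$ with $\vr v\models t(y,y,x)\approx x$ and $t(x,y,y)\mathrel{\mu}x$ in $\m f$, since for any $\m a$, $\theta\in\cn a$ and $(a,b)\in\theta$ the homomorphism $x\mapsto a$, $y\mapsto b$ carries $\mu$ into $[\Cg(a,b),\Cg(a,b)]\le[\theta,\theta]$. Thus the whole difficulty is concentrated in the abelian algebra $\m f/\mu$. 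Here the two hypotheses must be used together: the Taylor term plus symmetry of the commutator should first be parlayed into a \emph{weak} difference term for $\vr v$ --- morally, symmetry of the commutator is exactly what rules out the semilattice (type-$\mathbf 5$) behaviour that obstructs weak difference terms, though for a not-necessarily-locally-finite variety this must be phrased commutator-theoretically rather than through Tame Congruence Theory --- and from a weak difference term one may invoke the known structure theory to conclude that every abelian algebra in $\vr v$, and $\m f/\mu$ in particular, is affine (polynomially a module). Then, starting from a Taylor (or Siggers) term together with the weak difference term, one assembles a candidate $t$ and verifies $t(x,y,y)\mathrel{\mu}x$ using the module structure on $\m f/\mu$ and one further appeal to commutator symmetry, which is what upgrades the weak difference term's ``equality up to a term-condition error'' to genuine equality.

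The step I expect to be the main obstacle is precisely this last synthesis in the general case: turning ``Taylor plus symmetric commutator'' into usable structural information --- the weak difference term, and the affineness of abelian algebras (what for finite algebras would simply be the omission of types $\mathbf 1$ and $\mathbf 5$) --- and then promoting a weak difference term to an honest difference term. By contrast, the reduction to $\m f/\mu$ and the module bookkeeping once affineness is available should be routine.
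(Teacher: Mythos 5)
This statement is not proved in the paper at all: it is quoted verbatim from \cite{Knew} (Theorem 4.29 there), so there is no internal proof to compare yours against. Judged on its own terms, your proposal is a strategy outline rather than a proof, and you say so yourself: the entire substance of the hard direction --- extracting a weak difference term and the affineness of abelian algebras from ``Taylor term plus symmetric commutator,'' and then promoting the weak difference term to a genuine one --- is flagged as ``the main obstacle'' and left unaddressed. That synthesis is exactly what Kearnes's theorem accomplishes, so the gap is genuine and is the whole theorem.

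Two concrete problems beyond incompleteness. First, the heuristic you offer for the key step is wrong: type $\mathbf 5$ is not what obstructs (weak) difference terms, and commutator symmetry does not ``rule out semilattice behaviour.'' The variety of semilattices has type set $\{\mathbf 5\}$, is neutral (hence has a symmetric commutator), and the third projection is already a difference term for it, since $p_3(x,y,y)=y\equiv x$ modulo $[\theta,\theta]=\theta$. By Theorem~\ref{diffviatypes} (from \cite{Kdiff}), the locally finite obstructions are type $\mathbf 1$ and type $\mathbf 2$ minimal sets with \emph{nonempty tails}; the paper's closing example is precisely an SMB algebra (so with plenty of type $\mathbf 5$) that fails to have a difference term because of a type $\mathbf 2$ tail. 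So whatever commutator symmetry buys you, it must be information about tails of type $\mathbf 2$ minimal sets (in the locally finite picture), not the exclusion of type $\mathbf 5$. Second, in the forward direction your derivation of a Taylor term inspects type $\mathbf 1$ minimal sets, which only makes sense for locally finite varieties, while the theorem concerns arbitrary ones; note also that the identity $t(y,y,x)\approx x$ alone is satisfied by the third projection and hence does not yield a Taylor term --- you must invoke the commutator half of the definition of a difference term (e.g., via the idempotent Mal'cev condition characterizing difference term varieties in \cite{Kdiff}) to conclude that the variety interprets no trivial clone and therefore has a Taylor term.
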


Indeed, in the case of a Taylor algebra $\m a$ without a difference term, Kearnes in \cite{Knew} constructs a factor $\m b$ of such an algebra and two congruences $\alpha,\beta\in\Cn b$ such that $0_\sm b<\alpha<\beta$, $[\alpha,\beta]=0_\sm b$, but $[\beta,\alpha]=\alpha$ (cf. the proof of Theorem 4.28 of \cite{Knew}).

We will prove presently that there are SMB algebras without a difference term. The significance of this is, that in all previous proofs of Park's Conjecture, the formula $\varphi(x,y,z,u)$ which defined $[\Cg(a,b),\Cg(c,d)]=0_{\sm a}$ satisfied the sentence \[(\forall x,y,z,u)\varphi(x,y,z,u)\Leftrightarrow\varphi(z,u,x,y),\]
but in the case of SMB algebras, if we manage to find a formula $\varphi$ which defines $[\Cg(a,b),\Cg(c,d)]=0_{\sm a}$, we will probably have examples where this sentence fails. In order to prove that there exist SMB algebras without a difference term, we recall an old characterization of difference term varieties:

\begin{thm}[Theorem 1.1 of \cite{Kdiff}]\label{diffviatypes}
Let $\vr v$ be a variety such that $\m f_{\vr v}(2)$ is finite. $\vr v$ has a difference term iff for all finite $\m a\in\vr v$, $\mathbf{1}\notin\mathrm{typ}\{\m a\}$ and type $\mathbf{2}$ minimal sets of $\m a$ have empty tails.
\end{thm}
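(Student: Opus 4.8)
The plan is to establish the equivalence in both directions (following \cite{Kdiff}), the backward one carrying the real content. I will freely use the tame congruence theory of \cite{HM}, together with two standard facts: that a prime quotient $\alpha\prec\beta$ of a finite algebra is abelian, i.e.\ $[\beta,\beta]\le\alpha$, precisely when its type is $\mathbf{1}$ or $\mathbf{2}$; and that, because $t(y,y,x)$ and $t(x,y,y)$ are elements of $\m f_{\vr v}(2)$, the variety $\vr v$ has a difference term if and only if there is a ternary term $t$ with $t(y,y,x)=x$ and $(t(x,y,y),x)\in[1,1]$, both holding in $\m f_{\vr v}(2)$ with $1$ its top congruence. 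Thus finiteness of $\m f_{\vr v}(2)$ turns the commutator part of the requirement into a statement about one finite algebra.

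\emph{Forward direction.} Suppose $t$ is a difference term. To exclude type $\mathbf{1}$: if a finite $\m a\in\vr v$ had $\alpha\prec\beta$ of type $\mathbf{1}$, I would pass to the algebra induced on the body $N$ of an $(\alpha,\beta)$-minimal set, read modulo $\alpha$; all its polynomial operations are essentially unary, so the induced operation $\bar t(x,y,z)$ depends on at most one coordinate. The identity $t(y,y,x)\approx x$ then forces $\bar t(x,y,z)=z$, whereas $[\beta,\beta]\le\alpha$ with the commutator clause forces $\bar t(x,y,y)\equiv x$ modulo $\alpha$; since $N$ meets at least two $\alpha$-classes, this is a contradiction. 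To show every type $\mathbf{2}$ minimal set $U$ has empty tail, I would restrict to the body $N$, where modulo $\alpha$ the induced algebra is affine, hence abelian, so the commutator is trivial there and the difference-term identities pin $\bar t$ down to the module Mal'cev polynomial $x-y+z$. Assuming the tail were nonempty, I would take a tail element $c$ and use the Hobby--McKenzie description of a type $\mathbf{2}$ minimal set with nonempty tail --- the polynomials connecting $c$ to $N$ and the failure of affine behaviour at $c$ --- to transport this Mal'cev polynomial across $c$, contradicting that description. This second step is the more delicate half of the forward direction.

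\emph{Backward direction.} Now assume every finite $\m a\in\vr v$ omits type $\mathbf{1}$ and has only type $\mathbf{2}$ minimal sets with empty tails. Omission of type $\mathbf{1}$ yields a Taylor term, and, by the theory of weak difference terms, a ternary term $w$ with $\vr v\models w(y,y,x)\approx x$ that is Mal'cev on every abelian congruence class of every algebra of $\vr v$; it remains to upgrade the inequality $(w(x,y,y),x)\in[\theta,\theta]$ from abelian $\theta$ to arbitrary $\theta$. The empty-tail hypothesis is exactly what clears the obstruction: on a tail-free type $\mathbf{2}$ $(\alpha,\beta)$-minimal set $w$ is already Mal'cev on all of $U$ modulo $\alpha$, not merely on a trace, while at type $\mathbf{3}$, $\mathbf{4}$, $\mathbf{5}$ quotients the clause is automatic since $[\theta,\theta]$ is large enough to contain $(w(x,y,y),x)$. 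I would package this into a chain of Mal'cev-style conditions $\Sigma_n$, $n\in\omega$, where $\Sigma_n$ says that a bounded iterate of $w$ is a difference term and is equivalent, via tame congruence theory, to a local property of the finite algebras of $\vr v$ implied by type $\mathbf{1}$-omission together with empty tails; finiteness of $\m f_{\vr v}(2)$ then forces the iteration to stabilise within $\m f_{\vr v}(2)$, so that some $\Sigma_n$ holds in $\vr v$ and delivers a genuine difference term.

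The hard part will be this last globalisation step: passing from uniform but purely \emph{local} tame-congruence-theoretic information about the abelian prime quotients of all finite algebras of $\vr v$ to a single ternary term valid throughout $\vr v$ that simultaneously respects the non-abelian quotients and the identity $w(y,y,x)\approx x$. It is here that the hypothesis ``$\m f_{\vr v}(2)$ finite'' is indispensable, as without it the local pieces need not patch into a global term. By contrast the forward direction's exclusion of type $\mathbf{1}$ is short, and its empty-tail half, though not conceptually difficult, requires a careful reading of the Hobby--McKenzie structure theory of type $\mathbf{2}$ minimal sets.
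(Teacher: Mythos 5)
This statement is not proved in the paper at all: it is Theorem 1.1 of Kearnes' \emph{Varieties with a Difference Term}, quoted as a black box and used only to certify that the three-element algebra at the end of Section 7 generates a variety without a difference term. So there is no in-paper argument to measure your proposal against; it has to stand on its own as a reconstruction of Kearnes' theorem, and as such it is an outline of the right shape rather than a proof. Your reduction of the commutator clause to the generic pair in $\m f_{\vr v}(2)$ is fine, and the forward direction's exclusion of type $\mathbf{1}$ is essentially complete: essentially unary polynomials on the body force the induced $\bar t$ to be the third projection modulo $\alpha$, while abelianness of the quotient forces $\bar t(x,y,y)\equiv x$ modulo $\alpha$, a contradiction. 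But the empty-tail half is only a promissory note --- ``transport this Mal'cev polynomial across $c$, contradicting that description'' names no actual polynomial and no actual contradiction, and this step genuinely requires the structure theory of the unary polynomials and the pseudo-Mal'cev operation on a type $\mathbf{2}$ minimal set.

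The backward direction, which carries the entire content of the theorem, is where the gap is serious. First, you assert that omission of type $\mathbf{1}$ already yields a term $w$ with $\vr v\models w(y,y,x)\approx x$. What the theory actually gives (Hobby--McKenzie Theorem 9.6 and Lemma 9.4, or the weak difference term of Kearnes--Kiss) is only $w(y,y,x)\,[\theta,\theta]\,x$ for $(x,y)\in\theta$: the identity holds modulo a commutator, not outright. Upgrading one side of a weak difference term to a genuine identity is itself a major part of Kearnes' argument and is exactly where the empty-tail hypothesis enters; you cannot take it as input. Second, the ``chain of Mal'cev-style conditions $\Sigma_n$ that stabilises in $\m f_{\vr v}(2)$'' is never specified: you do not say what $\Sigma_n$ asserts, why empty tails imply some $\Sigma_n$ locally, or why local information on minimal sets patches into a single term valid throughout $\vr v$. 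Since you yourself flag this as ``the hard part,'' the honest assessment is that the proposal locates the difficulty but does not resolve it. If you mean to use this theorem, as the paper does, cite it; if you mean to prove it, the backward direction must be written out following Kearnes' actual construction.
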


And now we give an example of an SMB variety without a difference term.

\begin{ex}
Let $A=\{0,1,2\}$ and let $d$ be a ternary operation on $A$ given by
\[d(x,y,z)=\left\{
\begin{array}{cc}
x+y+z,&\text{if }2\notin\{x,y,z\}\text{ and}\\
2,&\text{otherwise.}
\end{array}
\right.\] 
In the above formula, the addition is modulo 2. If we define $x\mt y:=d(x,x,y)$, it is not hard to verify that $\m a=(A;\mt,d)$ is a regular SMB algebra modulo $\sim$, where $x\sim y$ iff $x=y$ or $x,y\in \{0,1\}$. Moreover, $0_{\sm a}{\prec_2}\:{\sim}$ in $\Cn a$. Finally, we note that $A$ is an $(0_{\sm a},\sim)$-minimal algebra and its tail is $\{2\}$, i.e. it is nonempty. This follows since $2$ is an absorbing element for any term operation of $\m a$, so $2\in p(A)$, whenever $p$ is a nonconstant unary polynomial. Now Theorem~\ref{diffviatypes} implies that $\vr v(\m a)$ has no difference term.
\end{ex}

\end{document}